\theoremstyle{plain}
\newtheorem{theo}{Theorem}
\newtheorem{prop}{Proposition}
\newtheorem{lemm}{Lemma}
\newtheorem{coro}{Corollary}
\newtheorem{assump}{Assumption}
\theoremstyle{definition}
\newtheorem{remark}{Remark}
\newtheorem{prob}{Problem}
\def\0{\bm{0}}
\def\1{\bm{1}}
\def\2{\bm{2}}
\def\3{\bm{3}}
\def\4{\bm{4}}
\def\5{\bm{5}}
\def\6{\bm{6}}
\def\7{\bm{7}}
\def\8{\bm{8}}
\def\9{\bm{9}}
\def\a{\bm{a}}
\def\b{\bm{b}}
\def\d{\bm{d}}
\def\e{\bm{e}}
\def\f{\bm{f}}
\def\h{\bm{h}}
\def\m{\bm{m}}
\def\n{\bm{n}}
\def\p{\bm{p}}
\def\s{\bm{s}}
\def\t{\bm{t}}
\def\u{\bm{u}}
\def\v{\bm{v}}
\def\x{\bm{x}}
\def\A{\bm{A}}
\def\B{\bm{B}}
\def\C{\bm{C}}
\def\D{\bm{D}}
\def\E{\bm{E}}
\def\F{\bm{F}}
\def\G{\bm{G}}
\def\H{\bm{H}}
\def\I{\bm{I}}
\def\L{\bm{L}}
\def\M{\bm{M}}
\def\N{\bm{N}}
\def\P{\bm{P}}
\def\Q{\bm{Q}}
\def\S{\bm{S}}
\def\U{\bm{U}}
\def\V{\bm{V}}
\def\W{\bm{W}}
\def\X{\bm{X}}
\def\Y{\bm{Y}}
\def\Z{\bm{Z}}
\def\IC{\mathcal{I}}
\def\SC{\mathcal{S}}
\def\Real{\mathbb{R}}
\def\Pib{\bm{\Pi}}
\def\Sigmab{\bm{\Sigma}}
\def\Omegab{\bm{\Omega}}
\def\mmin{\mbox{\scriptsize min}}
\def\mmax{\mbox{\scriptsize max}}
\def\widebar{\accentset{{\cc@style\underline{\mskip10mu}}}}
\def\Widebar{\accentset{{\cc@style\underline{\mskip8mu}}}}
\def\wb{\widebar}
\def\wh{\widehat}
\def\MVEE{\mathrm{P}}
\def\spaApprox{\mathsf{spaApprox}}
\def\randApprox{\mathsf{randApprox}}
\def\svdApprox{\mathsf{svdApprox}}
\def\spa{\mathsf{spa}}
\def\pspa{\mathsf{pspa}}
\def\mpspa{\mathsf{mpspa}}
\def\erspa{\mathsf{erspa}}
\def\merspa{\mathsf{merspa}}
\def\spaspa{\mathsf{spaspa}}
\def\vca{\mathsf{vca}}
\newcommand{\by}[2]{$#1 \times #2$}
\title{Efficient Preconditioning for Noisy Separable NMFs by
Successive Projection Based Low-Rank Approximations}
\author{Tomohiko~Mizutani%
\thanks{Department of Industrial Engineering and Economics,
Tokyo Institute of Technology,
2-12-1-W9-69, Ookayama, Meguro-ku, Tokyo, 152-8552, Japan. 
{\tt mizutani.t.ab@m.titech.ac.jp}}
\and
Mirai~Tanaka%
\thanks{Department of Mathematical Analysis and Statistical Inference,
The Institute of Statistical Mathematics, 
10-3, Midori-cho, Tachikawa, Tokyo 190-8562, Japan.
{\tt mirai@ism.ac.jp}}}
\date{\today}
\begin{document}

\maketitle

\begin{abstract}
 The successive projection algorithm (SPA) can quickly solve
 a nonnegative matrix factorization problem under a separability assumption.
 Even if noise is added to the problem,
 SPA is robust as long as the perturbations caused by the noise are small.
 In particular, robustness against noise should be high
 when handling the problems arising from real applications.
 The preconditioner proposed by Gillis and Vavasis (2015)
 makes it possible to enhance the noise robustness of SPA.
 Meanwhile, an additional computational cost is required.
 The construction of the preconditioner contains a step
 to compute the top-$k$ truncated singular value decomposition of an input matrix.
 It is known that the decomposition provides the best rank-$k$ approximation to the input matrix;
 in other words, a matrix with the smallest approximation error
 among all matrices of rank less than $k$.
 This step is an obstacle to  an efficient implementation of
 the preconditioned SPA.

 To address the cost issue,
 we propose a modification of the algorithm for constructing the preconditioner.
 Although the original algorithm uses the best rank-$k$ approximation,
 instead of it, our modification uses an alternative.
 Ideally, this alternative should have high approximation accuracy and low computational cost.
 To ensure this, our modification employs a rank-$k$ approximation
 produced by an SPA based algorithm.
 We analyze the accuracy of the approximation and evaluate the computational cost of the algorithm.
 We then present an empirical study revealing
 the actual performance of the SPA based rank-$k$ approximation algorithm
 and the modified preconditioned SPA.

 \medskip \noindent
 {\bf Keywords:}
 separable nonnegative matrix factorization,
 robustness, successive projection,
 singular value decomposition, low-rank approximation, hyperspectral unmixing
 \end{abstract}

\section{Introduction} \label{Sec: intro}
Given $\M \in \Real^{d \times m}_+$ and a positive integer $k$,
a nonnegative matrix factorization (NMF) problem
is one of finding $\F \in \Real^{d \times k}_+$ and $\W \in \Real^{k \times m}_+$
such that $\|\F\W - \M\|_F$ is minimized.
Here, a nonnegative matrix is a real matrix whose elements are all nonnegative,
and $\Real^{d \times m}_+$ denotes the set of \by{d}{m} nonnegative matrices.
Although NMF problems cover a broad range of applications,
they are often intractable.
Indeed, the NMF problem was shown to be NP-hard in \cite{Vav09};
also see \cite{Aro12a} for a further discussion on the hardness of the problem.
However, the situation changes
if we make the separability assumption, introduced in \cite{Don03, Aro12a, Aro12b}.

Let $\M \in \Real^{d \times m}_+$ have an exact NMF
such that $\M = \F\W$
for $\F \in \Real^{d \times k}_+$ and $\W \in \Real^{k \times m}_+$.
Separability assumes that $\M$ can be further written as 
 \begin{equation} \label{Eq: separable matrix} 
  \M = \F\W \ \mbox{for} \ \F \in \Real^{d \times k}_+ \ \mbox{and} \ \W = [\I, \H]\Pib \in \Real^{k \times m}_+.
 \end{equation}
Here,
$\I$ is the \by{k}{k} identity, 
$\H$ is a \by{k}{(m-k)} nonnegative matrix, and 
$\Pib$ is an \by{m}{m} permutation matrix.
We call $\F$ the {\it basis} of $\M$ and $k$ the {\it factorization rank}.
If a nonnegative matrix $\M$ can be written as (\ref{Eq: separable matrix}),
we call it a {\it separable matrix}.
The feature of separable matrices is that all columns of $\F$ appear in those of $\M$.
The separable NMF problem is stated as follows.

\begin{prob}
 Let $\M$ be of the form given as (\ref{Eq: separable matrix}).
 Find a column index set $\IC$ with $k$ elements such that $\M(\IC)$ coincides with the basis $\F$.
\end{prob}
The notation $\M(\IC)$ denotes the submatrix of $\M$ indexed by $\IC$;
in other words, $\M(\IC) = [\m_i : i \in \IC]$ for the $i$th column $\m_i$ of $\M$.
The problem is solvable in polynomial time \cite{Aro12a}.

Separable matrices arise in applications, such as
endmember detection in hyperspectral images \cite{Nas05, Bio12, Ma14, Gil14, Gil15a, Gil15b}
and topic extraction from documents \cite{Aro12a, Aro12b, Aro13, Miz14}.
In such applications, it should be reasonable to assume 
that separability is affected by noise.
Noisy separability assumes that a separable matrix $\M$ given as (\ref{Eq: separable matrix})
is perturbed by $\N \in \Real^{d \times m}$. That is, 
\begin{equation*} 
 \A
  = \M + \N   
  = \F\W + \N \ \mbox{for} \ \F \in \Real^{d \times k}_+, \W = [\I, \H]\Pib \in \Real^{k \times m}_+
  \ \mbox{and} \ \N \in \Real^{d \times m}.
\end{equation*}
If a nonnegative matrix $\A$ can be written as above, 
we call it a {\it noisy separable matrix}.
Consider an algorithm for solving separable NMF problems.
Given a noisy separable matrix $\A$ and factorization rank $k$,
we say that the algorithm is {\it robust to noise} if it can find a column index set $\IC$
with $k$ elements such that $\A(\IC)$ is close to the basis $\F$ of $\A$.

The successive projection algorithm (SPA) was originally proposed by \cite{Ara01} 
in the context of chemometrics.
Recently, Gillis and Vavasis showed in \cite{Gil14} that
SPA works well at solving separable NMF problems.
Given a separable matrix $\M$ and a factorization rank $k$ as input,
SPA finds a column index set $\IC$ with $k$ elements such that
$\M(\IC)$ coincides with the basis $\F$. Even if noise is involved,
it is robust to small perturbations caused by it.
The results in \cite{Gil14} imply that the robustness of SPA can be further improved if
one can make the condition number of the basis in a noisy separable matrix smaller.
Accordingly, Gillis and Vavasis proposed the preconditioned SPA (PSPA) in \cite{Gil15a}.
The results in \cite{Gil15a, Miz16} imply that PSPA is more robust than SPA.

Although PSPA is more robust than SPA,
it has an issue in regard to its computational cost.
To see this clearly, let us  recall how PSPA constructs a conditioning matrix.
The input is $\A \in \Real^{d \times m}$ and a positive integer $k$.
The construction of the conditioning matrix consists of two steps.
The first step computes the top-$k$ truncated singular value decomposition (SVD)
$\A_k = \U_k \Sigmab_k \V_k^\top$ of $\A$
and then constructs $\P = \Sigmab_k \V_k^\top \in \Real^{k \times m}$.
We will explain the decomposition in Section \ref{Subsec: SVD and low-rank approx},
and  $\U_k, \V_k$ and $\Sigmab_k$ are of
the form specified in (\ref{Eq: Uk and Vk}) and (\ref{Eq: Sk}).
The second step computes a \by{k}{k} positive definite matrix $\L^*$
such that $\{\x \in \Real^k : \x^\top \L^* \x \le 1 \}$ is 
a minimum-volume enclosing ellipsoid
for the set of columns $\p_1, \ldots, \p_m$ of $\P$.
This can be obtained by solving a convex optimization problem.
Since $\L^*$ is positive definite, there is a $\C \in \Real^{k \times k}$ such that $\L^* = \C^2$.
PSPA forms $\P^\circ = \C \P$ and runs SPA for $\P^\circ$.

PSPA needs to compute a truncated SVD in the first step
and solve a convex optimization problem in the second step.
In practice, the computational cost of solving the optimization problem
can be sufficiently reduced by employing a cutting plane strategy.
Experimental evidence for this was presented in previous studies \cite{Sun04, Ahi08}.
A truncated SVD can be computed in polynomial time.
Given a \by{d}{m} matrix with $d \le m$,
the truncated SVD can be obtained by constructing the full SVD and then truncating it.
This approach takes $O(d^2m)$.
However, the computation of a truncated SVD is challenging when the matrix is large.
In such cases, the first step is an obstacle to an efficient implementation of PSPA.

\subsection{Our Algorithms and Contributions} \label{Subsec: algorithms and contributions}
The aim of this paper is to develop a modification of PSPA such that
its robustness against noise is high and its computational cost is low.
We modify the first step of PSPA.
The step computes the top-$k$ truncated SVD $\A_k = \U_k \Sigmab_k \V_k^\top$
of an input matrix $\A$.
Regarding $\A_k$,
the Eckart-Young-Mirsky theorem tells us that
\begin{equation*}
  \min_{\scriptsize \mbox{rank}(\B) \le k} \|\A - \B\| = \|\A - \A_k\|
\end{equation*}
holds when the norm is the $L_2$ norm or the Frobenius norm.
A matrix is called a {\it rank-$k$ approximation} to $\A$
if it is the same size as $\A$ and is at most rank $k$.
We can see from the theorem that $\A_k$ is the best rank-$k$ approximation to $\A$ under the norms.
Although PSPA uses $\A_k$,
our modification replaces it with an alternative rank-$k$ approximation.

The key to the development of our modified PSPA is the method for
constructing a rank-$k$ approximation to a matrix.
Ideally, the approximation accuracy should be high and the computational cost should be low.
If the accuracy of the rank-$k$ approximation is close enough to that of the best approximation,
the modified PSPA is expected to be as robust to noise as PSPA.
We incorporate an SPA based rank-$k$ approximation in the modified PSPA.
Algorithm \ref{Alg: SPA based algorithm} describes each step of the algorithm.
$\IC$ of Step 1 is a subset of $\{1, \ldots, m\}$ with $k$ elements 
(Algorithm \ref{Alg: SPA} shows the details of SPA).
Since the rank of $\Q$ is less than or equal to $k$,
the output matrix $\B$ serves as a rank-$k$ approximation to $\A$.
The modified PSPA forms $\P = \Q^\top \A$
by using $\Q$ at Step 3 of Algorithm \ref{Alg: SPA based algorithm};
then, it constructs a minimum-volume enclosing ellipsoid for the columns of $\P$.

\begin{algorithm}
 \caption{SPA based rank-$k$ approximation}
 \label{Alg: SPA based algorithm}
 \smallskip
 Input: $\A \in \Real^{d \times m}$ and integers $q$ and $k$ such that  $0 \le q$ and $0 < k \le \min\{d, m\}$ \\
 Output: $\B \in \Real^{d \times m}$
 \begin{enumerate}[1:]
  \item Run SPA on the input $(\A, k)$ and let $\IC$ be the output.  
  \item Form $\Y = (\A \A^\top)^q \A(\IC) \in \Real^{d \times k}$.

  \item Compute the orthonormal bases of the range space of $\Y$ 
	and form a matrix $\Q$ by stacking them in a column.

  \item Form $\B = \Q\Q^\top \A$ and return it.
 \end{enumerate}
\end{algorithm}

Algorithm \ref{Alg: SPA based algorithm} has a close relationship with
a randomized algorithm in a subspace iteration framework \cite{Rok09, Hal11, Woo14, Gu15}.
The randomized algorithm is sometimes referred to as the randomized subspace iteration.
In contrast to $\Y$ in Step 2,
the randomized algorithm forms $\Y = (\A\A^\top)^q \A\Omegab$ using a Gaussian matrix $\Omegab$.
We will see the relationship between the algorithms in Section \ref{Subsec: relation with the randomized algorithm}.
It should be noted that Algorithm \ref{Alg: SPA based algorithm} is  deterministic,
while the randomized algorithm is probabilistic,
as it exploits the randomness of a Gaussian matrix.

Our first contribution is to provide a theoretical assessment of Algorithm \ref{Alg: SPA based algorithm}.
We derive a bound on the error $\|\A-\B\|_2$ for the input matrix $\A$ and output matrix $\B$.
The main result is Theorem \ref{Theo: main}.
The theorem tells us that,
if $\A$ is noisy separable and the amount of noise is smaller than certain level,
then, the error is close to the best error and decreases toward the best error
as $q$ increases.
We show in Section \ref{Subsec: cost} that
the algorithm takes $O(dmkq)$ arithmetic operations
on $\A \in \Real^{d \times m}$ and integers $q$ and $k$.

Our second contribution is a modification of PSPA
that uses Algorithm \ref{Alg: SPA based algorithm}.
The importance of data preprocessing in SPA has been recognized, and
there are several studies \cite{Nas05, Miz14, Gil15b,Tep16} on it.
Among them, Gillis and Ma proposed a modification of PSPA in \cite{Gil15b}.
It aimed to resolve the cost issue of PSPA.
The basic idea is to skip the computation of an enclosing ellipsoid
and only perform the computation of a truncated SVD.
Hence, our modification differs from their modification. 
One of the favorable features of our modification is that its robustness against noise
increases with $q$.

Our third contribution is an experimental assessment of
Algorithm \ref{Alg: SPA based algorithm} and the modified PSPA.
The results are summarized as follows.

\begin{itemize}
 \item 
       Experiments were conducted on synthetically generated noisy separable matrices.
       When $q$ exceeds $10$,
       the error of Algorithm \ref{Alg: SPA based algorithm} was close to the best error,
       and the modified PSPA significantly improved the robustness to noise compared with SPA,
       even if the amount of noise was large.
       Algorithm \ref{Alg: SPA based algorithm} with $q=10$ often provided
       highly accurate low-rank approximations,
       and the elapsed time was  much shorter than that of a truncated SVD computation.
       These results are displayed in Figures \ref{Fig: approximation error} and \ref{Fig: recovery rate}
       and in Table \ref{Tab: comp time and approx error on synthetic data}.

 \item
      We applied the modified PSPA to a hyperspectral unmixing problem.
      Experiments were conducted on the HYDICE urban data.
      When $q=4$,
      the estimation of endmembers by the modified PSPA coincided with that by PSPA.
      The estimation was close to the spectra of constituent materials identified in the previous study \cite{Zhu14}.
      The experimental results suggested the possibility that
      for hyperspectral unmixing,
      the modified PSPA provides the similar results
      to those of PSPA in less computational time.
      These results are shown in Table \ref{Tab: SAD} and Figure \ref{Fig: abundance maps}.

\end{itemize}
     
The rest of this paper is organized as follows.
Section \ref{Sec: preliminaries} describes the notation and definitions
that are necessary for our discussion.
In addition, it reviews SVD and low-rank approximations to matrices.
Section \ref{Sec: SPA and PSPA} overviews SPA and PSPA
and discusses the computational cost of PSPA. 
Section \ref{Sec: efficient precond} presents the modified PSPA and 
describes the results of our analysis on Algorithm \ref{Alg: SPA based algorithm}.
The details of the analysis are given in Section \ref{Sec: analysis}.
This section also overviews the randomized subspace iteration.
Section \ref{Sec: experiments} reports an empirical study on synthetic and real data.
Section \ref{Sec: concluding remarks} gives a summary and
suggests future research directions.

\section{Preliminaries} \label{Sec: preliminaries}

\subsection{Notation and Definitions}
Let $\Real$ denotes the set of real numbers,
and $\Real^{d \times m}$ the set of \by{d}{m} real matrices.
We use a capital upper-case letter $\A$ to denote a matrix.
A capital lower-case letter with subscript $\a_i$ indicates the $i$th column,
and a lower-case letter with subscript $a_{ij}$ indicates the $(i,j)$th element.
Let $\A \in \Real^{d \times m}$.
We denote by $\A^\top$ and $\mbox{rank}(\A)$ the transpose and rank of $\A$.
The notation $\| \A \|$ stands for the norm of $\A$;
in particular, $\|\A\|_2$ and $\|\A\|_F$ indicate the $L_2$ norm and Frobenius norm.
If the matrix size needs to be specified,
we use the notation $\A_{d \times m}$ to mean that $\A$ is \by{d}{m}.

Let us suppose that $\A$ is \by{d}{m}.
We say that it is {\it diagonal} if $a_{ij} = 0$ for every $i \neq j$.
The element $a_{ii}$ of a diagonal matrix $\A$ is abbreviated as $a_i$.
Let us recall the definition of positive definite and
positive semidefinite matrices.
Suppose that $\A$ is symmetric.
It is {\it positive semidefinite} if $\x^\top \A \x \ge 0$ for every $\x \neq \0$,
and, in particular, it is {\it positive definite} if the inequality holds strictly.
It is known that $\A$ is positive semidefinite (resp. positive definite)
if and only if all the eigenvalues of $\A$ are nonnegative (resp. positive).
Thus, a positive definite matrix is nonsingular.

The notations $\I$, $\0$, $\Pib$, $\e$ and $\e_i$ are used 
to denote the following matrices and vectors;
$\I$ is the identity matrix;
$\0$ is the all-zero matrix;
$\Pib$ is a permutation matrix;
$\e$ is the vector of all ones; and
$\e_i$ is the $i$th unit vector.

\subsection{SVD and Low-Rank Approximations} \label{Subsec: SVD and low-rank approx}
Let $\A \in \Real^{d \times m}$. It can be factorized into
\begin{equation*} 
 \A = \U \Sigmab \V^\top.
\end{equation*}
Here, $\U$ and $\V$ are \by{d}{d} and \by{m}{m} orthogonal matrices,
and $\Sigmab$ is a \by{d}{m} diagonal matrix with nonnegative diagonal elements
$\sigma_1 \ge \cdots \ge \sigma_t \ge 0$ where $t = \min\{d, m\}$.
This factorization is called the {\it singular value decomposition}
and it is commonly abbreviated by SVD.
The diagonal elements $\sigma_1, \ldots, \sigma_t$ are called 
the {\it singular values}.
Obviously, the number of positive singular values coincides with the rank.
We denote by $\sigma_{\mmax}$ and $\sigma_{\mmin}$ the largest and smallest singular values.
Throughout this paper,
we use $\sigma_i$ to denote the $i$th largest singular value of $\A$.
When referring to singular values of several different matrices $\A$ and $\B$, and so on,
to prevent confusion,
the notation $\sigma_i(\A)$ is used to denote the $i$th largest singular value of $\A$.
We define the {\it condition number} of $\A$ as $\sigma_{\mmax}(\A) / \sigma_{\mmin}(\A)$,
and denote it by $\kappa(\A)$.

By picking the top $k$ singular values $\sigma_1, \ldots, \sigma_k$ in $\Sigmab$,
we form a \by{k}{k} diagonal matrix,  
\begin{equation} \label{Eq: Sk} 
 \Sigmab_k =
  \left[
  \begin{array}{ccc}
   \sigma_1 &         &          \\
            & \ddots  &          \\
            &         & \sigma_k \\   
  \end{array}
  \right] \in \Real^{k \times k} 
\end{equation}
and also form 
\begin{equation} \label{Eq: Uk and Vk} 
 \U_k  = [\u_1, \ldots, \u_k] \in \Real^{d \times k} \ \mbox{and} \ \V_k = [\v_1, \ldots, \v_k] \in \Real^{m \times k}
\end{equation}
for the first $k$ columns $\u_1, \ldots, \u_k$ in $\U$ and
those $\v_1, \ldots, \v_k$ in $\V$.
Let
\begin{equation*}
 \A_k = \U_k \Sigmab_k \V_k^\top. 
\end{equation*}
We call it the {\it top-$k$ truncated SVD} of $\A$.
Suppose that $k$ is a positive integer such that $k < \mbox{rank}(\A)$.
Then, 
\begin{eqnarray*}
 & & \min_{\scriptsize \mbox{rank}(\B) \le k} \|\A - \B\|_2  = \|\A - \A_k\|_2 = \sigma_{k+1}, \\
 & & \min_{\scriptsize \mbox{rank}(\B) \le k} \|\A - \B\|_F  = \|\A - \A_k\|_F = \sqrt{\sigma_{k+1}^2 + \cdots + \sigma_{t}^2},
\end{eqnarray*}
where $t = \min\{d,m\}$.
This result is known as the Eckart-Young-Mirsky theorem;
see \cite{Eck36, Tre97, Gol13} for the details.
We can see that $\A_k$ is the best rank-$k$ approximation to $\A$
under the $L_2$ norm error or Frobenius norm error.

In the subsequent discussion, we will use two well-known results on singular values.
The first one is about singular value perturbations,
and the second one is called the interlacing property.

\begin{lemm}[Corollary 8.6.2 of \cite{Gol13}] \label{Lemm: perturbation on singular values}
 Let $\A \in \Real^{d \times m}$ and $\N \in \Real^{d \times m}$.
 Then, $|\sigma_i(\A + \N) - \sigma_i(\A)| \le \|\N\|_2$ holds for $i = 1, \ldots, t$, where $t = \min\{d,m\}$.
\end{lemm}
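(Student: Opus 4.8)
The plan is to prove the two-sided estimate from a single one-sided bound, which I would in turn reduce to a pointwise triangle inequality through the Courant--Fischer min--max characterization of singular values. First I would recall the variational formula
\[
  \sigma_i(\A) = \max_{\dim \SC = i} \ \min_{\substack{\x \in \SC \\ \|\x\|_2 = 1}} \|\A \x\|_2,
\]
where the maximum ranges over all $i$-dimensional subspaces $\SC \subseteq \Real^m$ and the minimum over unit vectors in $\SC$. This formula recovers the $i$th largest singular value for each $i = 1, \ldots, t$.

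Next I would fix an arbitrary $i$-dimensional subspace $\SC$ and apply the triangle inequality together with the definition of the $L_2$ norm: for every unit vector $\x \in \SC$,
\[
  \|(\A + \N)\x\|_2 \le \|\A\x\|_2 + \|\N\x\|_2 \le \|\A\x\|_2 + \|\N\|_2.
\]
Since the right-most term $\|\N\|_2$ is a constant independent of $\x$, taking the minimum over unit vectors $\x \in \SC$ on both sides preserves the inequality and lets the constant pass through the minimum, giving $\min_{\x \in \SC, \|\x\|_2 = 1} \|(\A+\N)\x\|_2 \le \|\N\|_2 + \min_{\x \in \SC, \|\x\|_2 = 1} \|\A\x\|_2$. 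Maximizing over all $i$-dimensional $\SC$ and invoking the variational formula for both $\A + \N$ (left side) and $\A$ (right side) yields the one-sided bound $\sigma_i(\A + \N) \le \sigma_i(\A) + \|\N\|_2$. I would then close the argument by symmetry: writing $\A = (\A + \N) + (-\N)$ and repeating the estimate with $\A + \N$ playing the role of the unperturbed matrix gives $\sigma_i(\A) \le \sigma_i(\A + \N) + \|\N\|_2$, because $\|-\N\|_2 = \|\N\|_2$. Combining the two bounds proves $|\sigma_i(\A + \N) - \sigma_i(\A)| \le \|\N\|_2$ for each $i$.

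The main obstacle is not the perturbation step, which is a routine triangle inequality, but rather establishing the min--max formula with the correct quantifier pattern and verifying that it is valid for every index up to $t = \min\{d, m\}$, including the degenerate case $d < m$ in which $\A$ has a nontrivial null space and the smallest ``singular values'' may vanish. Getting the dimensions of the competing subspaces and the order of $\max$ and $\min$ exactly right is where sign or indexing errors would most easily creep in.

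An alternative that sidesteps proving the singular-value min--max from scratch is to pass to the Hermitian dilation
\[
  \wt{\A} = \begin{bmatrix} \0 & \A \\ \A^\top & \0 \end{bmatrix} \in \Real^{(d+m) \times (d+m)},
\]
whose eigenvalues are $\pm \sigma_1(\A), \ldots, \pm \sigma_t(\A)$ together with $|d - m|$ zeros, and likewise to form $\wt{\N}$ from $\N$, noting $\|\wt{\N}\|_2 = \|\N\|_2$. The claim then follows at once from Weyl's perturbation inequality $|\lambda_i(\X + \E) - \lambda_i(\X)| \le \|\E\|_2$ for symmetric matrices $\X$ and $\E$. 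I would nonetheless favor the direct min--max route, since it avoids the bookkeeping needed to match the ordering of the dilation's eigenvalues with the ordering of the singular values.
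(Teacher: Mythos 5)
Your proof is correct, but note that the paper does not actually prove this lemma at all: it is imported as a known result, cited as Corollary 8.6.2 of \cite{Gol13}, and used as a black box (e.g.\ in Proposition \ref{Prop: size of singular values} and Proposition \ref{Prop: rho}). So there is no in-paper argument to compare against; what you have supplied is a self-contained proof of the cited textbook fact. Your route --- the Courant--Fischer characterization $\sigma_i(\A) = \max_{\dim \SC = i} \min_{\x \in \SC,\, \|\x\|_2 = 1} \|\A\x\|_2$, a pointwise triangle inequality, and then symmetry via $\A = (\A+\N) + (-\N)$ --- is essentially the standard derivation, and all the delicate points are handled properly: the constant $\|\N\|_2$ passes through both the $\min$ and the $\max$, the formula is valid for every $i \le t = \min\{d,m\}$ because $\sigma_i(\A) = \sqrt{\lambda_i(\A^\top\A)}$ and the square root commutes with the max--min, and the reversal step correctly uses $\|-\N\|_2 = \|\N\|_2$. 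The alternative you sketch via the Hermitian dilation and Weyl's inequality is also sound (the $i$th largest eigenvalue of the dilation equals $\sigma_i(\A)$ for $i \le t$, so no reordering issue actually arises), and it is the cleaner option if one is willing to take Weyl's theorem for symmetric matrices as the starting point rather than Courant--Fischer for singular values. Either version would serve as a valid replacement for the citation.
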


\begin{lemm}[Theorem 8.1.7 and Corollary 8.6.3 in \cite{Gol13}]  \label{Lemm: interlacing}
 Let $\A \in \Real^{d \times m}$.
 Suppose that $k$ is an integer such that $0 < k \le \min\{d,m\}$.
 Let $\B$ be the \by{d}{k} submatrix of $\A$ 
 consisting of the first $k$ columns.
 Then, $\sigma_k(\A) \ge \sigma_{k}(\B)$ or equivalently $\sigma_{\mmin}(\B)$ holds.
\end{lemm}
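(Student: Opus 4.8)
The plan is to argue through the Courant--Fischer min--max characterization of singular values, which is the variational tool behind the cited results. For $\A \in \Real^{d \times m}$ the $k$th largest singular value satisfies
\begin{equation*}
  \sigma_k(\A) = \max_{\scriptsize \dim \SC = k} \ \min_{\scriptsize \x \in \SC, \, \|\x\|_2 = 1} \|\A\x\|_2,
\end{equation*}
where $\SC$ ranges over the $k$-dimensional subspaces of $\Real^m$. First I would record that, since $\B$ has exactly $k$ columns, its $k$th singular value is its smallest, so $\sigma_k(\B) = \sigma_{\mmin}(\B) = \min_{\|\y\|_2 = 1} \|\B\y\|_2$ with $\y$ ranging over $\Real^k$; this disposes of the ``equivalently'' clause in the statement.

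The crux is to produce a single $k$-dimensional subspace of $\Real^m$ on which $\A$ acts exactly as $\B$. Let $\SC_0 = \mathrm{span}\{\e_1, \ldots, \e_k\} \subseteq \Real^m$ be the span of the first $k$ unit vectors, so that every $\x \in \SC_0$ has the form $\x = (\y^\top, \0^\top)^\top$ for some $\y \in \Real^k$. Because $\B$ consists of the first $k$ columns of $\A$, we have $\A\x = \B\y$ and $\|\x\|_2 = \|\y\|_2$, whence
\begin{equation*}
  \min_{\scriptsize \x \in \SC_0, \, \|\x\|_2 = 1} \|\A\x\|_2 = \min_{\scriptsize \y \in \Real^k, \, \|\y\|_2 = 1} \|\B\y\|_2 = \sigma_{\mmin}(\B) = \sigma_k(\B).
\end{equation*}
Since $\SC_0$ is just one admissible competitor in the outer maximization defining $\sigma_k(\A)$, the max--min formula immediately yields $\sigma_k(\A) \ge \sigma_k(\B)$, as claimed.

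I do not expect a genuine obstacle, as this is a classical interlacing fact already recorded in \cite{Gol13}; the only place needing care is the bookkeeping with the index $k$ and the embedding --- namely that $\sigma_k(\B)$ is the \emph{smallest} singular value of $\B$ and that restricting $\A$ to $\SC_0$ reproduces $\B$ verbatim. If a purely algebraic route were preferred, one can instead note that $\B^\top\B$ is precisely the leading $k \times k$ principal submatrix of $\A^\top\A$ and invoke Cauchy's interlacing theorem for eigenvalues of symmetric matrices to get $\sigma_k(\A)^2 = \lambda_k(\A^\top\A) \ge \lambda_k(\B^\top\B) = \sigma_k(\B)^2$; taking square roots gives the same conclusion.
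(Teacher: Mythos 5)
Your proof is correct. Note that the paper itself gives no proof of this lemma---it is quoted directly from Golub and Van Loan (Theorem 8.1.7 and Corollary 8.6.3), and the argument packaged in those two citations is exactly the algebraic route you sketch in your closing remark: $\B^\top\B$ is the leading $k \times k$ principal submatrix of $\A^\top\A$, Cauchy interlacing bounds $\lambda_k(\A^\top\A) \ge \lambda_k(\B^\top\B)$, and taking square roots transfers this to singular values. Your main variational argument via the Courant--Fischer max--min characterization is an equally valid, self-contained derivation of the same fact, and you correctly dispose of the ``equivalently'' clause by observing that $\sigma_k(\B) = \sigma_{\mmin}(\B)$ because $\B$ has exactly $k$ columns.
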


In Lemma \ref{Lemm: interlacing},
let $\C$ be the \by{k}{k} submatrix of $\A$  consisting of the first $k$ rows and
first $k$ columns.
Then, the lemma implies $\sigma_k(\A) \ge \sigma_k(\B) = \sigma_k(\B^\top) \ge \sigma_k(\C^\top) = \sigma_k(\C)$,
since $\C^\top$ is the submatrix of $\B^\top$ consisting of the first $k$ columns.

\section{SPA and PSPA} \label{Sec: SPA and PSPA}

\subsection{SPA} \label{Subsec: SPA}

Let us observe a separable matrix from a geometrical point of view.
Let $\M = \F\W$ for $\F \in \Real^{d \times k}_+$ and $\W = [\I,\H] \Pib \in \Real^{k \times m}_+$.
As discussed in \cite{Aro12a},
without loss of generality, every column $\h_i$ of $\H$ can be assumed to satisfy $\e^\top \h_i = 1$.
In addition, we assume that $\mbox{rank}(\F) = k$.
This assumption may be reasonable from the standpoint of a practical application,
because, in such cases, it is less common for the columns of $\F$ to be linearly dependent.
Thus, 
the convex hull of the columns of $\M$ is a $(k-1)$-dimensional simplex in $\Real^d$,
and the vertices correspond to the columns of $\F$.
Accordingly, a separable NMF problem without noise can be restated as follows.
Let $\SC$ be the set of points, including all vertices, in a $(k-1)$-dimensional simplex in $\Real^d$.
Find all points corresponding to the vertices of the simplex from $\SC$.
SPA finds a correct answer for a problem by exploiting 
the property that the maximum of a strongly convex function over the set $\SC$
is attained at one of the vertices.
It uses $f(\x) = \|\x\|_2^2$ and chooses a point to maximize $f$ over $\SC$.
Then, it projects all points of $\SC$ onto the orthogonal space to the chosen point.
Algorithm \ref{Alg: SPA} describes each step of SPA.

\begin{algorithm}
 \caption{SPA \cite{Ara01, Gil14}} \label{Alg: SPA}
 \smallskip
 Input:  $\A \in \Real^{d \times m}$ and an integer $k$ such that $0 < k \le m$ \\
 Output: An index set $\IC$  
 \begin{enumerate}[1:]
   \item Initialize $\S \leftarrow \A $ and $\IC \leftarrow \emptyset$.

   \item Find $i^* = \arg \max_{i=1, \ldots, m} \| \s_i \|_2^2$ 
	 for the columns $\s_1, \ldots, \s_m$ of $\S$.

   \item Set $\t  \leftarrow \s_{i^*}$. Update
	 \begin{equation*}
	  \displaystyle \S \leftarrow \biggl(\I - \frac{\t\t^\top}{\|\t\|_2^2} \biggr) \S
	   \ \mbox{and} \
	   \IC \leftarrow \IC \cup \{i^*\},
	 \end{equation*}
	 
   \item
	If $|\IC| < k$, go back to Step 2; otherwise, return $\IC$ and terminate.

 \end{enumerate}
\end{algorithm}

Now let us describe the results of Gillis and Vavasis  \cite{Gil14}
that SPA works well for solving separable NMF problems.
Let $\M \in \Real^{d \times m}$, and put the following assumption on it.
\begin{assump} \label{Assump}
 $\M$ can be written as
 \begin{equation*}
  \M = \F\W \ \mbox{for} \ \F \in \Real^{d \times k}  \ \mbox{and} \ \W = [\I, \H]\Pib \in \Real^{k \times m}_+.
 \end{equation*}
 Furthermore, $\F$ and the columns $\h_i$ of $\H$ satisfy
 \begin{enumerate}[{\normalfont (a)}]
  \item $\mbox{rank}(\F) = k$.
  \item $\e^\top \h_i \le 1$ for $i = 1, \ldots, m-k$.
\end{enumerate}
\end{assump}
$\W$ of Assumption \ref{Assump} is the same as that of the separability assumption,
whose description is given in (\ref{Eq: separable matrix}).
Meanwhile, $\F$ is more general than that of separability,
since it does not have to be nonnegative.
Gillis and Vavasis  \cite{Gil14} showed that 
SPA for $\M$ satisfying Assumption \ref{Assump} returns $\IC$
such that $\M(\IC)$ coincides with $\F$ of $\M$.
In addition, they showed that it is robust to noise.

\begin{theo}[Theorem 3 of \cite{Gil14}] \label{Theo: robustness of SPA}
 Let $\A = \M + \N$ for $\M \in \Real^{d \times m}$ and $\N \in \Real^{d \times m}$.
 Suppose that $k \ge 2$, $\M$ satisfies  Assumption \ref{Assump},
 and the columns $\n_i$ of $\N$ satisfy $\|\n_i\|_2 \le \epsilon$ for $i = 1, \ldots, m$ with
 \begin{equation*}
  \epsilon < \min \Biggl\{ \frac{1}{2\sqrt{k-1}}, \frac{1}{4} \Biggr\}
   \frac{\sigma_{\mmin}(\F)}{80 \kappa(\F)^2 + 1}.
 \end{equation*}
  Let $\IC$ be the output of Algorithm \ref{Alg: SPA} and 
  $i_1, \ldots, i_k$ be the elements in  $\IC$.
 Then, there is a permutation $\pi : \{1, \ldots, k\} \rightarrow \{1, \ldots, k\}$
 such that
 \begin{equation*}
  \|\a_{i_j} - \f_{\pi(j)}\|_2  \le (80 \kappa(\F)^2 + 1) \epsilon
 \end{equation*}
 for $j = 1, \ldots, k$.
\end{theo}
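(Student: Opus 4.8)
The plan is to argue by induction on the iteration count of SPA, following the standard strategy for greedy vertex identification under noise. The central geometric fact, which I would isolate as a preliminary lemma, is that $f(\x)=\|\x\|_2^2$ is strongly convex, so its maximum over the columns of $\M$ — which, under Assumption \ref{Assump}, all lie in the convex hull of $\0$ and the columns $\f_1,\dots,\f_k$ of $\F$ — is attained at a vertex, i.e.\ at a column of $\F$. Writing each data column as $\F\w_i$ with $\w_i$ in the sub-simplex $\{\w\ge\0,\ \e^\top\w\le 1\}$, the objective becomes the convex quadratic $\w^\top\F^\top\F\w$, whose strong-convexity modulus is governed by $\sigma_{\mmin}(\F)^2$ while its gradient scale is governed by $\sigma_{\mmax}(\F)$; this is where the dependence on $\kappa(\F)$ will originate. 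I would state the lemma quantitatively: any feasible point whose objective value is within a controlled margin of the maximum must lie within a controlled Euclidean distance of a vertex.

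For the base case I would show that the first index $i^*$ selected by SPA satisfies $\|\a_{i^*}-\f_{\pi(1)}\|_2\le(80\kappa(\F)^2+1)\epsilon$ for some vertex $\f_{\pi(1)}$. Here I write $\a_i=\m_i+\n_i$ with $\|\n_i\|_2\le\epsilon$ and $\m_i=\F\w_i$. Since SPA maximizes $\|\a_i\|_2^2$ and the noise perturbs each objective value by at most $O(\epsilon\,\sigma_{\mmax}(\F))$, the selected $\m_{i^*}$ must nearly maximize $\|\m_i\|_2^2$ over the hull; the strong-convexity lemma then forces $\m_{i^*}$ to be close to a vertex, and adding back the noise yields the stated bound. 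The hypothesis $\epsilon<\min\{\ldots\}\,\sigma_{\mmin}(\F)/(80\kappa(\F)^2+1)$ is precisely what keeps the margin positive, so that the near-maximizer is unambiguously attached to a single vertex.

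The inductive step is where I expect the real difficulty. After selecting $\a_{i^*}$, SPA projects every column onto the orthogonal complement of $\t=\a_{i^*}$ via $\I-\t\t^\top/\|\t\|_2^2$. I would show that the projected data again satisfies an approximate form of Assumption \ref{Assump}, now with a basis of one fewer column and a noise level grown by a controlled factor. The obstruction is twofold: first, $\t$ is only approximately a true vertex, so the applied projector differs from the projector onto the orthogonal complement of $\f_{\pi(1)}$, and I must bound this discrepancy and absorb it into the effective noise; second, I must verify that the remaining $k-1$ projected vertices stay well conditioned, for which Lemma \ref{Lemm: interlacing}, applied to the submatrix of vertices, together with Lemma \ref{Lemm: perturbation on singular values}, gives the needed control on $\sigma_{\mmin}$ and on $\kappa$ under perturbation. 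The accumulation of these per-step errors across the $k$ iterations, with the worst-case conditioning factor $\kappa(\F)^2$ entering at each projection, is what produces the constant $80$ and the final bound $(80\kappa(\F)^2+1)\epsilon$; preventing this accumulation from blowing up, by repeatedly invoking the smallness hypothesis on $\epsilon$, is the delicate part. Collecting the one-vertex-per-step conclusions then yields a bijection between $\{i_1,\dots,i_k\}$ and the vertices, defining $\pi$ and completing the proof.
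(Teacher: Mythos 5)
First, a point of reference: the paper does not prove this theorem at all. It is imported (with minor rephrasing) from Gillis and Vavasis, and the text immediately following it says it ``almost directly follows from Theorem 3 of \cite{Gil14}'', deferring even the discrepancies in the statement to Remark 1 of \cite{Miz16} and Remark 1 of \cite{Gil15a}. So there is no in-paper proof to compare against; your proposal must be judged against the proof in the cited source, whose overall architecture --- greedy selection, strong convexity of $\|\cdot\|_2^2$ over the simplex, and error propagation through the successive orthogonal projections --- your plan does reproduce correctly in outline.

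Judged as a proof, however, the proposal has a genuine gap, and it sits exactly where you say ``the real difficulty'' lies: the inductive step is described as a goal, not carried out, and the description as given would not close. Concretely: (i) the invariant ``the projected data again satisfies an approximate form of Assumption \ref{Assump} with noise grown by a controlled factor'' cannot simply be fed back into the same statement, because a per-step multiplicative growth of the effective noise together with a per-step degradation of $\sigma_{\mmin}$ of the projected basis compounds over the $k$ iterations and yields a constant depending on $k$ (potentially exponentially), not the uniform bound $(80\kappa(\F)^2+1)\epsilon$ claimed; the actual argument in \cite{Gil14} requires an induction hypothesis that is stable across iterations, and asserting that ``the accumulation \ldots is what produces the constant $80$'' is not a derivation of it. (ii) The theorem bounds $\|\a_{i_j}-\f_{\pi(j)}\|_2$ for the \emph{original} columns, but at step $j$ SPA selects by maximizing the norms of \emph{projected} columns; closeness of a projected column to a projected vertex does not transfer back to the original space, since the projection annihilates a direction. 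The standard repair is to run the whole argument on the weight vectors, showing $\w_{i_j}$ is close to the unit vector $\e_{\pi(j)}$ and then converting via $\a_{i_j}-\f_{\pi(j)}=\F(\w_{i_j}-\e_{\pi(j)})+\n_{i_j}$; your sketch never introduces this mechanism. (iii) The injectivity of $\pi$ (no vertex selected twice) also requires a quantitative margin argument on the weights and is only implicit in your final sentence. None of this invalidates the strategy --- it is the strategy of \cite{Gil14} --- but executing these steps is the entire technical content of that paper's multi-lemma proof, so what you have is a correct plan rather than a proof.
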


Although the description of Theorem \ref{Theo: robustness of SPA} 
does not completely match that of Theorem 3 in \cite{Gil14},
Theorem \ref{Theo: robustness of SPA} almost directly follows from Theorem 3 of \cite{Gil14}.
For the details, we refer readers to Remark 1 of \cite{Miz16} and Remark 1 of \cite{Gil15a}.
Theorem \ref{Theo: robustness of SPA} tells us that
SPA can extract the submatrix of a noisy separable matrix close to the basis 
if the amount of noise is small.

In addition to the results mentioned above,
Gillis and Vavasis examined the computational cost of SPA.
Step 3 of Algorithm \ref{Alg: SPA} needs to compute the matrix multiplication of
a \by{d}{d} matrix $\t\t^\top / \|\t\|_2^2$ and a \by{d}{m} matrix $\S$ for every iteration.
The matrix multiplication requires in total $O(d^2mk)$ arithmetic operations.
However, we can reduce these operations by taking into account the equality,
\begin{equation} \label{Eq: useful equality for implementation of SPA} 
 \| ( \I - \b \b^\top ) \a \|_2^2 = \|\a\|_2^2 - (\a^\top\b)^2
  \ \mbox{for} \ \a \in \Real^d \ \mbox{and} \ \b \in \Real^d \ \mbox{with} \ \|\b\|_2 = 1.
\end{equation}
This allows SPA to run in $O(dmk)$ (see Remark 5 of \cite{Gil14}).

As pointed out in \cite{Civ09, Gil14},
SPA is essentially the same as the QR factorization with column pivoting in \cite{Bus65}.
The authors of \cite{Civ09} used SPA in the context of studying 
the complexity of the following problems.
Given $\A \in \Real^{d \times m}$ and a positive integer $k$,
find the set $\IC$ of $k$ column indices among all candidates
to satisfy some criterion.
The authors gave four criteria, including the following two:
(a) $\sigma_{\mmin}(\A(\IC))$ is maximized, and (b)
the volume of parallelepiped spanned by the columns of $\A(\IC)$ is maximized.
They showed the NP-hardness of the problems.
They also used SPA as a heuristic for solving
the volume maximization problem and
studied the accuracy of the heuristic.

\subsection{PSPA} \label{Subsec: PSPA}

The use of a preconditioning matrix makes it possible to improve the robustness performance of SPA.
Theorem \ref{Theo: robustness of SPA} suggests that,
if the condition number of $\F$ can be made close to one,
the allowed noise magnitude range $\| \n_i \|_2$ increases and
the basis gap $\|\a_{i_j} - \f_{\pi(j)} \|_2$ decreases.
For $\A = \F\W + \N \in \Real^{d \times m}$,
we choose some matrix $\C \in \Real^{d \times d}$ and apply it to $\A$
so that $\C \A = \C \F \W + \C \N$.
If $\C$ is chosen to make the condition number of $\C\F$ small,
the robustness of SPA might be improved by performing it on $\C\A$ instead of $\A$,
although the amount of noise could be enlarged by up to a factor $\|\C\|_2$
because $\|\C \N\|_2 \le \|\C\|_2 \|\N\|_2$.
On the basis of this observation,
in \cite{Gil15a}, Gillis and Vavasis proposed PSPA (Algorithm \ref{Alg: PSPA}).

\begin{algorithm}[h]
 \caption{PSPA \cite{Gil15a}} \label{Alg: PSPA}
 \smallskip
 Input:  $\A \in \Real^{d \times m}$ and an integer $k$ such that $0 < k \le \min\{d,m\}$\\
 Output: An index set $\IC$ 
 \begin{enumerate}[1:]
  \item Compute the top-$k$ truncated SVD $\A_k = \U_k \Sigmab_k \V_k^\top$ of $\A$
	and form $\P = \Sigmab_k \V_k^\top \in \Real^{k \times m}$.
	 
  \item Let $\SC = \{\p_1, \ldots, \p_m\}$ for the columns $\p_1, \ldots, \p_m$ of $\P$ and
	compute the optimal solution $\L^*$, which is a \by{k}{k} positive definite matrix,
	of the optimization problem $\MVEE(\SC)$.
	
  \item Compute $\C \in \Real^{k \times k}$ such that $\L^* = \C^2$ and form $\P^\circ = \C\P$.

  \item Run Algorithm \ref{Alg: SPA} on input $(\P^\circ, k)$ and return the output $\IC$.
	
 \end{enumerate}
\end{algorithm}

Here, we explain Steps 1 to 3.
Step 1 computes the top-$k$ truncated SVD $\A_k$ of $\A$,
which is the best rank-$k$ approximation to $\A$, and then forms $\P$.
There is a relationship between $\A_k$ and $\P$ such that 
\begin{equation} \label{Eq: Ak and P} 
 \A_k = \U \left[
      \begin{array}{c}
       \P \\
       \0
      \end{array}
     \right]
\end{equation}
where $\U$ is a \by{d}{d} orthogonal matrix obtained from the SVD $\A = \U \Sigmab \V^\top$ of $\A$
and the leading $k$ columns correspond to the columns of $\U_k$.
Geometrically speaking, since $\U$ is orthogonal,
the columns of $\P$ are a consequence of rotating those of $\A_k$ about the origin.

Step 2 constructs $\SC = \{\p_1, \ldots, \p_m \}$
by gathering $\p_1, \ldots, \p_m$ of $\P \in \Real^{k \times m}$,
and it solves the following optimization problem.
\begin{equation*}
 \MVEE(\SC) :
 \begin{array}{llllll}
  \mbox{minimize}   &  - \log \det(\L) &
  \mbox{subject to} &  \p^\top \L \p \le 1 \  \mbox{for all} \ \p \in \SC, 
                    & \L \succ \0.
 \end{array}
\end{equation*}
$\L$ is a \by{k}{k} symmetric matrix, and this is the decision variable.
Here, the notation $\L \succ \0$ means that 
$\L$ is positive definite, and 
the notation $\log \det(\L)$ stands for
the logarithm of the determinant of $\L$.
$\MVEE(\SC)$ is a problem for constructing an enclosing ellipsoid with the minimum volume for $\SC$.
An ellipsoid centered at the origin in $\Real^k$ is described as
$\{\x \in \Real^k : \x^\top \L \x \le 1 \}$ with a \by{k}{k} positive definite matrix $\L$.
The volume is given as $v / \sqrt{\det (\L)}$ where $v$ is the volume of a unit ball in $\Real^k$
and the value is determined by $k$.
Thus, the optimal solution $\L^*$ of $\MVEE(\SC)$ gives 
a minimum-volume enclosing ellipsoid for the points in $\{\pm \p : \p \in \SC\}$.
If the \by{k}{m} matrix $\P$ has rank $k$,
the convex hull of the points in $\{\pm \p : \p \in \SC\}$ is full-dimensional in $\Real^k$
and its enclosing ellipsoid has a positive volume.
Hence, in that case, the optimal solution of $\MVEE(\SC)$ exists.
$\MVEE(\SC)$ is a convex optimization problem,
and there are efficient algorithms to compute $\L^*$.
For further details on the construction of an enclosing ellipsoid with minimum volume,
we refer readers to \cite{Boy04, Sun04, Ahi08, Miz14} and references therein.

Step 3 computes $\C \in \Real^{k \times k}$ such that $\L^* = \C^2$, and forms $\P^\circ = \C \P$.
Since $\L^*$ is positive definite, such a $\C$ exists and it can be 
constructed by using the eigenvalue decomposition of $\L^*$.
One may be concerned as to how 
this $\C$ serves as a restriction on the condition number of $\F$ in $\A$.
Intuitive explanations are given in Section 2 of \cite{Gil15a} and Section 2.2.1 of \cite{Miz16}.

Gillis and Vavasis showed the robustness of PSPA in \cite{Gil15a}.
A later study \cite{Miz16} was conducted under weaker conditions than those assumed by them.

\begin{theo}[Theorem 3 of \cite{Miz16}] \label{Theo: robustness of precond SPA}
 Let $\A = \M + \N$ for $\M \in \Real^{d \times m}$ and $\N \in \Real^{d \times m}$.
 Suppose that $k \ge 2$, $\M$ satisfies  Assumption \ref{Assump} and 
 $\N$ satisfies $\|\N\|_2 = \epsilon$ with 
 \begin{equation*}
  \epsilon \le \frac{\sigma_{\mmin}(\F)}{1225 \sqrt{k}}.
 \end{equation*}
 Let $\IC$ be the output of Algorithm \ref{Alg: PSPA} and
 $i_1, \ldots, i_k$ be the elements in  $\IC$.
 Then, there is a permutation $\pi : \{1, \ldots, k\} \rightarrow \{1, \ldots, k\}$
 such that
 \begin{equation*}
  \|\a_{i_j} - \f_{\pi(j)}\|_2  \le (432 \kappa(\F) + 4) \epsilon
 \end{equation*}
 for $j = 1, \ldots, k$.
\end{theo}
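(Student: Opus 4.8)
The plan is to move the whole problem into the $k$-dimensional preconditioned space, apply the robustness of plain SPA there, and then pull the guarantee back to the columns of $\A$. First I would record that the matrix $\P$ formed in Step~1 is exactly the projection of $\A$ onto the leading left singular subspace: from $\A = \U\Sigmab\V^\top$ we get $\U^\top\A = \Sigmab\V^\top$, and reading off the top $k$ rows gives $\P = \U_k^\top\A$. Substituting $\A = \F\W + \N$ then yields the noisy separable representation $\P = \G\W + \N_P$ in $\Real^k$, where $\G = \U_k^\top\F$ and $\N_P = \U_k^\top\N$. Since $\U_k$ has orthonormal columns, $\|\N_P\|_2 \le \|\N\|_2 = \epsilon$, so the projection does not amplify the noise. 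The preliminary work is to show that $\G$ inherits good conditioning from $\F$: Lemma~\ref{Lemm: perturbation on singular values} gives $\sigma_{k+1}(\A) \le \epsilon$ (as $\M$ has rank $k$), and combined with a standard subspace perturbation argument this bounds the principal angles between $\mathrm{range}(\U_k)$ and $\mathrm{range}(\F)$ by $O(\epsilon/\sigma_{\mmin}(\F))$, whence $\sigma_{\mmin}(\G)$ and $\sigma_{\mmax}(\G)$ stay within a constant factor of those of $\F$ and $\kappa(\G) = O(\kappa(\F))$ once $\epsilon \le \sigma_{\mmin}(\F)/(1225\sqrt{k})$.

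The heart of the argument is Step~2, the analysis of the minimum-volume enclosing ellipsoid. Up to the perturbation $\N_P$, the columns of $\P$ are convex combinations (by Assumption~\ref{Assump}(b), since every column of $\W$ has $\ell_1$ norm at most one) of the $k$ basis columns of $\G$, i.e.\ approximately the vertices and interior points of a simplex spanned by $\pm$ the columns of $\G$. I would invoke L\"owner--John theory: the optimal $\L^* = \C^2$ of $\MVEE(\SC)$ yields a linear map $\C$ that places this centrally symmetric cloud in near-isotropic position, so the transformed basis $\C\G$ has condition number bounded by an absolute constant (close to the value for a regular simplex). Quantitatively one needs two-sided bounds on $\|\C\|_2$ and $\|\C^{-1}\|_2$ in terms of $\sigma_{\mmin}(\G)$ and $\sigma_{\mmax}(\G)$, plus a stability statement that the ellipsoid of the noisy cloud does not degenerate. \emph{This is the step I expect to be the main obstacle}: one must show the MVEE is robust to the perturbation $\N_P$ of size $\epsilon$, and this is precisely where the smallness hypothesis $\epsilon \le \sigma_{\mmin}(\F)/(1225\sqrt{k})$ is consumed to keep the perturbed simplex nondegenerate.

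With $\C\G$ well conditioned, Step~3 applies Theorem~\ref{Theo: robustness of SPA} to $\P^\circ = \C\P = (\C\G)\W + \C\N_P$. The basis is $\C\G$ with $\kappa(\C\G) = O(1)$ and the noise obeys $\|\C\N_P\|_2 \le \|\C\|_2\,\epsilon$; I would verify that this perturbed noise still lies below the threshold of Theorem~\ref{Theo: robustness of SPA} for the basis $\C\G$, which once more reduces to the standing bound on $\epsilon$ after inserting the conditioning estimates from Step~2. The conclusion is that SPA returns $\IC = \{i_1,\dots,i_k\}$ and a permutation $\pi$ with $\|\p^\circ_{i_j} - (\C\G)_{\pi(j)}\|_2$ bounded by a small constant multiple of $\epsilon$; note that, because $\kappa(\C\G) = O(1)$, the quadratic factor $\kappa(\cdot)^2$ appearing in Theorem~\ref{Theo: robustness of SPA} is now harmless, which is exactly the benefit of preconditioning and the reason the final bound is only linear in $\kappa(\F)$.

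Finally I would translate this back to the original space. Writing $\p^\circ_{i_j} = \C\U_k^\top\a_{i_j}$ and the target vertex $(\C\G)_{\pi(j)} = \C\U_k^\top\f_{\pi(j)}$, multiplying the Step~3 bound by $\|\C^{-1}\|_2$ controls the in-subspace error $\|\U_k^\top(\a_{i_j}-\f_{\pi(j)})\|_2$. The component orthogonal to $\mathrm{range}(\U_k)$ is handled separately: decomposing $\a_{i_j} = \F\w_{i_j} + \n_{i_j}$ and using that $\mathrm{range}(\F)$ lies within angle $O(\epsilon/\sigma_{\mmin}(\F))$ of $\mathrm{range}(\U_k)$ (from Step~1), both $(\I-\U_k\U_k^\top)\a_{i_j}$ and $(\I-\U_k\U_k^\top)\f_{\pi(j)}$ are $O(\kappa(\F)\epsilon)$, since $\|\F\w_{i_j}\|_2 \le \sigma_{\mmax}(\F)$. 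Adding the two orthogonal contributions by the Pythagorean identity gives $\|\a_{i_j}-\f_{\pi(j)}\|_2 = O(\kappa(\F)\epsilon)$; the remaining work is purely to track the absolute constants through each inequality so they collapse to $432\kappa(\F) + 4$.
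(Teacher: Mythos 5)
A point of order first: the paper itself contains no proof of this theorem. It is imported verbatim as Theorem 3 of \cite{Miz16}, and the paper defers entirely to Section 3 of \cite{Miz16} for the argument (see Remark \ref{Remark: robustness of MPSPA}). So your proposal has to be judged against that cited proof. Its architecture is indeed the one you describe: write $\P = \U_k^\top \A = \G\W + \N_P$ with $\G = \U_k^\top \F$ and $\|\N_P\|_2 \le \epsilon$; show that the preconditioner $\C$ obtained from $\MVEE(\SC)$ makes $\C\G$ nearly perfectly conditioned; apply Theorem \ref{Theo: robustness of SPA} to $\P^\circ = \C\P$; and pull the bound back to $\Real^d$ by splitting along $\mbox{range}(\U_k)$ and its orthogonal complement. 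Your peripheral steps are sound: noise is not amplified by $\U_k^\top$, the Wedin-type angle bound $O(\epsilon/\sigma_{\mmin}(\F))$ controls both $\kappa(\G)$ and the out-of-subspace components, and the Pythagorean recombination gives $O(\kappa(\F)\epsilon)$, with the preconditioning explaining why the dependence is $\kappa(\F)$ rather than $\kappa(\F)^2$.

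The genuine gap is the one you flagged yourself, and it cannot be dispatched by ``invoking L\"owner--John theory'' --- it \emph{is} the theorem. John's characterization gives only the noiseless statement: since every column of $\W$ lies in $\mathrm{conv}\{\0, \e_1, \ldots, \e_k\}$, the symmetrized hull of the columns of $\G\W$ is the image under $\G$ of the cross-polytope, whose origin-centered MVEE is the unit ball; hence $\L^* = (\G\G^\top)^{-1}$ and $\C\G$ is exactly orthogonal. What the theorem requires is a \emph{stability} version: two-sided bounds on $\sigma_{\mmin}(\C\G)$ and $\sigma_{\mmax}(\C\G)$ when the point cloud is perturbed by $\N_P$ with $\|\N_P\|_2 \le \epsilon$, sharp enough to (i) keep the preconditioned noise $\|\C\N_P\|_2$ below the admissibility threshold of Theorem \ref{Theo: robustness of SPA} applied to the basis $\C\G$, and (ii) produce the explicit constants $1225\sqrt{k}$ and $432\kappa(\F)+4$. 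That analysis --- lower- and upper-bounding $\det(\L^*)$ via feasibility of a comparison ellipsoid and the enclosure constraints, then converting these determinant bounds into singular-value bounds for $\C\G$ --- occupies the bulk of \cite{Miz16} (and of \cite{Gil15a} before it). The constants do not emerge from ``tracking inequalities'' in the outer argument, as your last sentence suggests, but from this inner lemma. Without it, your proposal establishes that the strategy is plausible, not the stated bound.
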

In comparison with Theorem \ref{Theo: robustness of SPA} describing the robustness of SPA,
this result tells us that
PSPA reduces the bound on the basis gap $\|\a_{i_j} - \f_{\pi(j)}\|_2$
from $\kappa(\F)^2$ to $\kappa(\F)$.

Although PSPA is more robust than SPA,
it has an issue in regard to its computational cost.
In particular, Steps 1 and 2 account for the major part of the cost
for constructing the conditioning matrix $\C$.
Step 1 requires the truncated SVD of a \by{d}{m} matrix $\A$ to be computed.
Suppose that $d \le m$.
The truncated SVD can be obtained by computing the full SVD and then truncating it.
The simple but stable approach takes  $O(d^2m)$; see Chapter 8.6 of \cite{Gol13}.
However, it is challenging when the matrix is large.
In such cases, the truncated SVD computation often takes too long. 
Step 2 requires one to solve $\MVEE(\SC)$
with a \by{k}{k} matrix variable and $m$ linear inequality constraints.
In practice, $\MVEE(\SC)$ can be solved quickly
by performing the interior-point algorithm within a cutting plane strategy \cite{Sun04, Ahi08}
even if $m$ is large; experimental evidence for this was presented in those papers.
The strategy enables us to obtain the optimal solution
by using only some portion of the $m$ constraints.
In addition, the size of a matrix variable  is $k$.
It is usually set to a much smaller integer value than $d$ and $m$.
Accordingly, Step 1 is an obstacle to an efficient implementation of PSPA.

Data preprocessing, such as use of a conditioning matrix,
has often been used in order to improve the robustness of SPA.
Below, we revisit several of the previous studies on preprocessing in SPA.

 \begin{itemize}

  \item ({\it Ellipsoidal Rounding} \cite{Miz14}) 
	The algorithm, called ER-SPA in the paper,
	can be viewed as SPA with preprocessing.
	As PSPA does, 
	it builds an ellipsoid.
	The first two steps are the same as Steps 1 and 2 of Algorithm \ref{Alg: PSPA}, 
	but the ellipsoid built in Step 2 is used for narrowing down the candidates
	for the target column indices of input matrix.
	It picks all points lying on the boundary of the ellipsoid.
	If the number of boundary points exceeds $k$,
	it chooses $k$ points by using SPA.

  \item ({\it Prewhitening and SPA Based Preconditioning} \cite{Gil15b})
	Modifications to resolve the cost issue of PSPA
	were proposed in the paper.
	The prewhitening technique computes
	the top-$k$ truncated SVD $\A_k = \U_k \Sigmab_k \V_k^\top$ of a matrix $\A$
	and constructs a conditioning matrix $\C = \Sigmab_k^{-1} \U_k^\top$.
	The SPA based preconditioning technique 
	avoids the use of the truncated SVD of $\A$
	in favor of the submatrix.
	It performs SPA on the input $(\A, k)$ and extracts $\A(\IC)$
	by using the output $\IC$.
	Then, it computes the truncated SVD of $\A(\IC)$
	and constructs a conditioning matrix in the same way as
	the prewhitening technique.

  \item ({\it SPA with Data Compression} \cite{Tep16})  
	The algorithm uses the randomized subspace iteration \cite{Rok09, Hal11, Woo14, Gu15}
	(Algorithm \ref{Alg: rand subspace iteration}).
	It performs Steps 1 to 3 of Algorithm \ref{Alg: rand subspace iteration} for a matrix $\A$.
	Using a matrix $\Q$ at Step 3, it compresses $\A$ into $\P = \Q^\top \A$.
	The compressed matrix $\P$ has fewer rows than the original one $\A$.
	SPA is applied to $\P$.

\end{itemize}

The first and second algorithms listed above
have been theoretically shown to be robust.
Besides the above algorithms, 
we should also mention the {\it vertex component analysis} in \cite{Nas05}.
This algorithm was developed for hyperspectral unmixing and is widely used
for that purpose.
It can be viewed as an algorithm for solving separable NMF problems with noise
and uses a truncated SVD for data preprocessing.

\section{Efficient Preconditioning for SPA} \label{Sec: efficient precond}

Algorithm \ref{Alg: MPSPA} is our modification of PSPA.

\begin{algorithm}[h]
 \caption{Modified PSPA} \label{Alg: MPSPA}
 \smallskip
 Input:  $\A \in \Real^{d \times m}$ and integers $q$ and $k$ such that $0 \le q$ and $0 < k \le \min\{d,m\}$ \\
 Output: An index set $\IC$ 
 \begin{enumerate}[1:]
  \item Construct $\Q$ 
	by performing Steps 1 to 3 of Algorithm \ref{Alg: SPA based algorithm}
	and form $\P = \Q^\top \A$.
	 
  \item Perform Steps 2 to 4 of Algorithm \ref{Alg: PSPA}.
 \end{enumerate}
\end{algorithm}

Step 1 forms $\P$ using the by-product $\Q$ of Algorithm \ref{Alg: SPA based algorithm}.
The algorithm forms $\B = \Q\Q^\top\A$ for the input matrix $\A$ 
and returns it as output.
The output matrix $\B$ is a rank-$k$ approximation to $\A$.
We can relate $\P$ and $\B$ such that  
\begin{equation} \label{Eq: B and P}
 \B = \Q\Q^\top \A = \Q\P = 
  \wb{\Q} \left[
      \begin{array}{c}
       \P \\
       \0
      \end{array}
     \right]
\end{equation}
where $\wb{\Q}$ is a \by{d}{d} orthogonal matrix whose leading columns are the columns of $\Q$.
Step 1 of Algorithm \ref{Alg: MPSPA} uses a $\P$ derived from the rank-$k$ approximation $\B$
produced by Algorithm \ref{Alg: SPA based algorithm},
while, as can be seen in (\ref{Eq: Ak and P}),
Step 1 of Algorithm \ref{Alg: PSPA} uses a $\P$ derived from the best rank-$k$ approximation $\A_k$.
Hence, if $\B$ is a rank-$k$ approximation to $\A$ with high accuracy,
Algorithm \ref{Alg: MPSPA} should be as robust to noise as Algorithm \ref{Alg: PSPA}.

We need to ensure the existence of the optimal solution of $\MVEE(\SC)$
with $\SC = \{\p_1, \ldots, \p_m\}$ for $\p_1, \ldots, \p_m$ of $\P$.
In particular, it exists if $\mbox{rank}(\B) = k$.
Suppose $\mbox{rank}(\B) = k$.
Since $\mbox{rank}(\P) = \mbox{rank}(\B)$
because of the relationship of (\ref{Eq: B and P}),
we can see that $\mbox{rank}(\P) = k$.
Note here that $\P$ is formed as $\P = \Q^\top \A$.
Since the number of columns of $\Q$ is less than or equal to $k$, so is the number of rows of $\P$.
If it is strictly less than $k$, this contradicts $\mbox{rank}(\P) = k$.
Thus, $\P$ is a \by{k}{m} matrix with rank $k$.
As mentioned in Section \ref{Subsec: PSPA},
this means that the optimal solution of $\MVEE(\SC)$ exists.

 \begin{remark} \label{Remark: robustness of MPSPA}
 Let us go back to the rank-$k$ approximation $\B$ to $\A$
 produced by Algorithm \ref{Alg: SPA based algorithm}.
 If $\B$ is a good approximation to $\A$ and has rank $k$,
 we can show that Algorithm \ref{Alg: MPSPA}
 reaches a similar level of noise-robustness as Algorithm \ref{Alg: PSPA}.
 Suppose that $\B$ satisfies (a) and (b).
\begin{enumerate}[(a)]
 \item $\|\A-\B\|_2 \le 2 \sigma_{k+1}$.
 \item $\mbox{rank}(\B) =k$.
\end{enumerate}
 Then, even if 
 we replace Algorithm \ref{Alg: PSPA} with Algorithm \ref{Alg: MPSPA}
 in the statement of Theorem \ref{Theo: robustness of precond SPA},
 the theorem is still valid as long as we increase 
 the constant values $1225$, $432$, and $4$ involved in it.
 The theorem can be proven by following the arguments
 in the proof of Theorem \ref{Theo: robustness of precond SPA}
 given in Section 3 of \cite{Miz16}.
 \end{remark}

In Section \ref{Subsec: cost},
we will list the computational cost of each step for Algorithm \ref{Alg: SPA based algorithm}.
It reveals that Algorithm \ref{Alg: SPA based algorithm} takes $O(dmkq)$ and
Step 1 of Algorithm \ref{Alg: MPSPA} also takes $O(dmkq)$.
In Section \ref{Subsec: properties},
we will show that the output $\B$ of Algorithm \ref{Alg: SPA based algorithm} has the following properties.

\begin{theo} \label{Theo: main}
 Let $\A = \M + \N$ for $\M \in \Real^{d \times m}$ and $\N \in \Real^{d \times m}$.
 Suppose that $k \ge 2$,  $\M$ satisfies Assumption \ref{Assump}, and $\N$ satisfies
\begin{equation*}
 \|\N\|_2 < \min \Biggl\{ \frac{1}{2\sqrt{k-1}}, \frac{1}{4} \Biggr\}
  \frac{\sigma_{\mmin}(\F)}{1 + 80 \kappa(\F)^2}.
\end{equation*}
 Then, the output $\B$ of Algorithm \ref{Alg: SPA based algorithm} satisfies (a) and (b).
 \begin{enumerate}[{\normalfont (a)}]
  \item $\displaystyle \|\A - \B \|_2 <
	\sigma_{k+1} \sqrt{
	1 + \frac{1}{20164} 
	\Bigl(\frac{\sigma_{k+1}}{\sigma_{k}} \Bigr)^{4q-2}}$.
  \item $\mbox{rank}(\B) = k$.	
 \end{enumerate}
\end{theo}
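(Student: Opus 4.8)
My plan is to combine the noise-robustness of SPA (Theorem~\ref{Theo: robustness of SPA}) with a deterministic convergence analysis of the subspace iteration that builds $\Q$ in Steps~1--3 of Algorithm~\ref{Alg: SPA based algorithm}. Since $\|\n_i\|_2\le\|\N\|_2$ for each column and the hypothesis on $\|\N\|_2$ is exactly the noise level allowed in Theorem~\ref{Theo: robustness of SPA}, I would first apply that theorem with $\epsilon=\|\N\|_2$: the set $\IC$ from Step~1 satisfies $\|\a_{i_j}-\f_{\pi(j)}\|_2\le(80\kappa(\F)^2+1)\epsilon$ for some permutation $\pi$. Writing $\A(\IC)=\A\E$ with $\E$ the selected columns of the identity, this gives $\|\A(\IC)-\F\Pib\|_2\le\sqrt{k}\,(80\kappa(\F)^2+1)\epsilon$ for the matching permutation $\Pib$, so Lemma~\ref{Lemm: perturbation on singular values} yields $\sigma_{\mmin}(\A(\IC))\ge\sigma_{\mmin}(\F)-\sqrt{k}\,(80\kappa(\F)^2+1)\epsilon$. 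The same lemma gives $\sigma_{k+1}\le\epsilon$ (as $\sigma_{k+1}(\M)=0$) and, using $\W\W^\top\succeq\I$ so that $\sigma_k(\M)\ge\sigma_{\mmin}(\F)$, also $\sigma_k\ge\sigma_{\mmin}(\F)-\epsilon>0$.

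Next I would use the SVD $\A=\U\Sigmab\V^\top$ and split $\U=[\U_1,\U_2]$, $\V=[\V_1,\V_2]$, $\Sigmab=\mathrm{diag}(\Sigmab_1,\Sigmab_2)$ into the leading $k$ components and the rest. Setting $X_1=\U_1^\top\A(\IC)=\Sigmab_1\V_1^\top\E$ and $X_2=\U_2^\top\A(\IC)=\Sigmab_2\V_2^\top\E$, the Pythagorean identity $\|\A(\IC)\x\|_2^2=\|X_1\x\|_2^2+\|X_2\x\|_2^2$ gives $\|X_2\|_2\le\sigma_{k+1}$ and $\sigma_{\mmin}(X_1)\ge\sqrt{\sigma_{\mmin}(\A(\IC))^2-\sigma_{k+1}^2}>0$, so $X_1$ is invertible; in particular $\mathrm{range}(\Y)$ is $k$-dimensional and $\Q$ has $k$ columns. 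Because $(\A\A^\top)^q\A(\IC)=\U_1\Sigmab_1^{2q}X_1+\U_2\Sigmab_2^{2q}X_2$, right-multiplying by $(\Sigmab_1^{2q}X_1)^{-1}$ shows $\mathrm{range}(\Y)=\mathrm{range}(\U_1+\U_2 T)$ with $T=\Sigmab_2^{2q}G\Sigmab_1^{-2q}$ and $G=X_2X_1^{-1}$. This $G$ is the deterministic analogue of the quantity $\Omegab_2\Omegab_1^{-1}$ that appears for the randomized subspace iteration; the whole point is that here $G$ is controlled through SPA rather than through Gaussian concentration.

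Since $\B=\Q\Q^\top\A$, I have $\A-\B=(\I-\Q\Q^\top)\A$, where $\I-\Q\Q^\top$ is the orthogonal projection onto $\mathrm{range}(\U_1+\U_2 T)^\perp$. Rotating by $\U$ and $\V$ and using $\|(\I+TT^\top)^{-1}\|_2\le1$ collapses the error to a block matrix whose spectral norm I would bound by
\[ \|\A-\B\|_2^2\le\sigma_{k+1}^2+\|T\Sigmab_1\|_2^2,\qquad T\Sigmab_1=\Sigmab_2^{2q}G\Sigmab_1^{-(2q-1)}. \]
Bounding $\|T\Sigmab_1\|_2\le\sigma_{k+1}^{2q}\sigma_k^{-(2q-1)}\|G\|_2$ gives $\|\A-\B\|_2^2\le\sigma_{k+1}^2\bigl(1+\|G\|_2^2(\sigma_{k+1}/\sigma_k)^{4q-2}\bigr)$, which is the target once $\|G\|_2^2\le1/20164$; and $\|G\|_2\le\|X_2\|_2/\sigma_{\mmin}(X_1)\le\sigma_{k+1}/\sqrt{\sigma_{\mmin}(\A(\IC))^2-\sigma_{k+1}^2}$, so $\|G\|_2\le1/142$ is equivalent to $\sigma_{\mmin}(\A(\IC))\ge\sqrt{20165}\,\sigma_{k+1}$, which I verify by inserting $\sigma_{k+1}\le\epsilon$ and the first-paragraph lower bound on $\sigma_{\mmin}(\A(\IC))$. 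Part~(b) is then immediate: part~(a) forces $\|\A-\B\|_2<\sigma_k$ (because $\sigma_{k+1}\le\epsilon$ lies far below $\sigma_k$), whereas $\mathrm{rank}(\B)\le\mathrm{rank}(\Q)\le k$; if $\mathrm{rank}(\B)\le k-1$ the Eckart--Young--Mirsky theorem would give $\|\A-\B\|_2\ge\sigma_k$, a contradiction, so $\mathrm{rank}(\B)=k$.

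The hard part is the constant tracking in the last step, i.e.\ showing $\sigma_{\mmin}(\A(\IC))\ge\sqrt{20165}\,\sigma_{k+1}$ holds uniformly. After substituting the bounds, this reduces to $\min\{\tfrac{1}{2\sqrt{k-1}},\tfrac14\}\bigl(\sqrt{20165}/(1+80\kappa(\F)^2)+\sqrt{k}\bigr)\le1$ for every $k\ge2$ and $\kappa(\F)\ge1$. This inequality is tight (its worst case is near $k=5$, where the left side is about $0.997$), so the constant $20164$ in the statement is evidently calibrated precisely so that the column-wise SPA bound of Theorem~\ref{Theo: robustness of SPA} still dominates the $\sqrt{20165}\,\sigma_{k+1}$ threshold after the $\sqrt{k}$ loss incurred in passing from the column-wise guarantee to the spectral lower bound on $\sigma_{\mmin}(\A(\IC))$. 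Making this bookkeeping close with these exact constants—rather than the convergence algebra, which is routine once $G$ is identified—is the delicate point; note also that the bound $\|T\Sigmab_1\|_2\le\sigma_{k+1}^{2q}\sigma_k^{-(2q-1)}\|G\|_2$ uses $2q-1\ge1$, so the argument runs cleanly for $q\ge1$.
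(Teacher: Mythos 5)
Your proposal is correct, and it shares the paper's overall architecture: Theorem \ref{Theo: robustness of SPA} controls the columns selected in Step 1, the deterministic projection bound $\|\A-\B\|_2^2\le\sigma_{k+1}^2+\|T\Sigmab_1\|_2^2$ is exactly the paper's Lemma \ref{Lemm: low-rank approx by CSS}(a) (proved in its appendix following \cite{Hal11}), and your $G=X_2X_1^{-1}$ and $T=\Sigmab_2^{2q}G\Sigmab_1^{-2q}$ are the paper's $\G_2\G_1^{-1}$ and $\H=\Z_2\Z_1^{-1}$, yielding the same factor $(\sigma_{k+1}/\sigma_k)^{4q-2}\|G\|_2^2$ as in Proposition \ref{Prop: low-rank approx of CSS}. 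Three steps genuinely differ, and two of them buy something. (i) You lower-bound $\sigma_{\mmin}(X_1)$ by $\sqrt{\sigma_{\mmin}(\A(\IC))^2-\sigma_{k+1}^2}$ via the Pythagorean identity, while the paper uses the weaker perturbation bound $\sigma_{\mmin}(\G_1)\ge\sigma_{\mmin}(\A(\IC))-\sigma_{k+1}$ (Proposition \ref{Prop: bounds on G1 and G2}(b)); your threshold is therefore $\sigma_{\mmin}(\A(\IC))\ge\sqrt{20165}\,\sigma_{k+1}\approx 142.004\,\sigma_{k+1}$ versus the paper's implicit $143\,\sigma_{k+1}$. This is not cosmetic: the paper's own bookkeeping ends with the chain $\sigma_{k+1}/\rho<\frac{1}{323-81\sqrt{5}}<\frac{1}{142}$, whose second inequality is numerically false ($323-81\sqrt{5}\approx 141.88<142$), so the paper's route as written only justifies a constant of roughly $1/20129$ in place of $1/20164$; your sharper bound legitimately closes the stated constant, with the worst case $k=5$, $\kappa(\F)=1$ giving approximately $0.997<1$ exactly as you computed. (ii) For part (b) you combine the error bound with the Eckart--Young--Mirsky theorem and $\sigma_k\ge\sigma_{\mmin}(\F)-\|\N\|_2\gg\sigma_{k+1}$ to force a contradiction, whereas the paper lower-bounds $\sigma_k(\B)\ge\sigma_k\bigl((\I+\H^\top\H)^{-1}\S_1\bigr)>0$ via Lemma \ref{Lemm: low-rank approx by CSS}(b); both are valid, and yours dispenses with that half of the lemma entirely. (iii) You flag that the step $\|\S_1^{-(2q-1)}\|_2=\sigma_k^{-(2q-1)}$ requires $q\ge1$; the paper's Proposition \ref{Prop: low-rank approx of CSS} has the same hidden restriction (for $q=0$ that norm is $\sigma_1$, not $\sigma_k$), so you are, if anything, more careful on this point. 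The only piece you leave as a sketch is the block-projection inequality itself, but identifying $\mbox{range}(\Y)=\mbox{range}(\U_1+\U_2 T)$ and invoking that bound is precisely the content of the paper's appendix argument, so nothing essential is missing.
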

Result (a) implies
\begin{equation*}
 \|\A - \B \|_2 < \sigma_{k+1} \Biggl(1 + \frac{1}{142} \Bigl(\frac{\sigma_{k+1}}{\sigma_k}\Bigr)^{2q-1}\Biggr).
\end{equation*}
We thus see that the error is bounded by the sum of $\sigma_{k+1}$ and an extra term.
If $\sigma_{k+1} / \sigma_k < 1$,
the extra term decreases toward zero at a rate of $(\sigma_{k+1} / \sigma_k)^2$
as $q$ increases. 
In addition, if a parameter $q$ is chosen as $q \ge 1$,
the result in (a) implies 
$ \| \A - \B \|_2 < \sigma_{k+1} \sqrt{1 + 1/20164}  < 1.00003 \sigma_{k+1}$.
Below, we examine the value of $\sigma_{k+1} / \sigma_k$.

\begin{prop} \label{Prop: size of singular values}
 Let $\A = \M + \N$ for $\M \in \Real^{d \times m}$ and $\N \in \Real^{d \times m}$.
 Suppose that $\M$ satisfies Assumption \ref{Assump} and $\N$ satisfies
 $\| \N \|_2 < \frac{1}{2} \sigma_{\mmin}(\F)$. Then, $\sigma_k > \sigma_{k+1}$.
\end{prop}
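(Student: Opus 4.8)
The plan is to bound $\sigma_k = \sigma_k(\A)$ from below and $\sigma_{k+1} = \sigma_{k+1}(\A)$ from above, exploiting the fact that $\M$ has rank exactly $k$ together with the singular value perturbation bound of Lemma \ref{Lemm: perturbation on singular values}. First I would observe that $\M = \F\W$ with $\mbox{rank}(\F) = k$ and $\W = [\I, \H]\Pib$ of full row rank $k$, since $\W$ contains the identity block. Hence $\mbox{rank}(\M) = k$, so that $\sigma_{k+1}(\M) = 0$ while $\sigma_k(\M)$ is the smallest nonzero singular value of $\M$.

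The key step is then to establish the lower bound $\sigma_k(\M) \ge \sigma_{\mmin}(\F)$. Because $\Pib$ is a permutation matrix and therefore orthogonal, the singular values are unchanged under right multiplication by $\Pib^\top$, so $\sigma_i(\M) = \sigma_i(\M\Pib^\top) = \sigma_i([\F, \F\H])$ for every $i$. The matrix $[\F, \F\H]$ has $\F$ as the submatrix formed by its first $k$ columns, so the interlacing property of Lemma \ref{Lemm: interlacing} yields $\sigma_k([\F, \F\H]) \ge \sigma_k(\F) = \sigma_{\mmin}(\F)$. Combining these gives $\sigma_k(\M) \ge \sigma_{\mmin}(\F)$.

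Next I would invoke Lemma \ref{Lemm: perturbation on singular values} with the perturbation $\N$. It gives $\sigma_k = \sigma_k(\M + \N) \ge \sigma_k(\M) - \|\N\|_2 \ge \sigma_{\mmin}(\F) - \|\N\|_2$ on the one hand, and $\sigma_{k+1} = \sigma_{k+1}(\M + \N) \le \sigma_{k+1}(\M) + \|\N\|_2 = \|\N\|_2$ on the other. To conclude $\sigma_k > \sigma_{k+1}$ it therefore suffices that $\sigma_{\mmin}(\F) - \|\N\|_2 > \|\N\|_2$, i.e. $\|\N\|_2 < \frac{1}{2}\sigma_{\mmin}(\F)$, which is precisely the hypothesis of the proposition.

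The main obstacle is the second step, namely the lower bound $\sigma_k(\M) \ge \sigma_{\mmin}(\F)$; the rest follows immediately from Lemma \ref{Lemm: perturbation on singular values} and the rank count. With the permutation-invariance observation and the interlacing property already in hand, however, even this step is routine, so I expect the argument to be short.
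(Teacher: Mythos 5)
Your proposal is correct and follows essentially the same route as the paper's proof: both use Lemma \ref{Lemm: perturbation on singular values} to get $\sigma_{k+1}(\A) \le \|\N\|_2$ and $\sigma_k(\A) \ge \sigma_k(\M) - \|\N\|_2$, and Lemma \ref{Lemm: interlacing} (together with the invariance of singular values under the permutation $\Pib$) to get $\sigma_k(\M) = \sigma_k([\F, \F\H]) \ge \sigma_{\mmin}(\F)$, concluding under the hypothesis $\|\N\|_2 < \frac{1}{2}\sigma_{\mmin}(\F)$. Your explicit remark that $\mbox{rank}(\M) = k$ is slightly more than needed (only $\sigma_{k+1}(\M) = 0$, i.e. $\mbox{rank}(\M) \le k$, is used), but this is a cosmetic difference.
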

\begin{proof}
 From Lemma \ref{Lemm: perturbation on singular values}, we obtain
 the inequality $| \sigma_i(\A) - \sigma_i(\M)| \le \|\N\|_2$.
 When $i=k+1$, it gives
 \begin{equation} \label{Eq: bound on sigma_k+1}
 \sigma_{k+1}(\A) \le \|\N\|_2
 \end{equation}
 since $\sigma_{k+1}(\M) = 0$.
 When $i=k$, it gives
 $\sigma_k(\A) \ge \sigma_k(\M) - \|\N\|_2$.
 From Lemma \ref{Lemm: interlacing}, we have 
 $\sigma_k(\M) = \sigma_k(\F[\I,\H]\Pib) = \sigma_k([\F,\F\H]) \ge \sigma_k(\F)
 \ \mbox{or equivalently} \ \sigma_{\mmin}(\F)$.
 These two inequalities lead to
 \begin{equation} \label{Eq: bound on sigma_k} 
  \sigma_k(\A) \ge \sigma_{\mmin}(\F) - \|\N\|_2.
 \end{equation}
 It follows from (\ref{Eq: bound on sigma_k+1}) and (\ref{Eq: bound on sigma_k}) that
 $\sigma_k(\A) > \sigma_{k+1}(\A)$ if $\|\N\|_2 < \frac{1}{2}\sigma_{\mmin}(\F)$.
\end{proof}
From Proposition \ref{Prop: size of singular values}, we see
that $\sigma_{k+1} / \sigma_k < 1$ holds under the conditions of Theorem \ref{Theo: main},
since $\N$ satisfies $\|\N\|_2 < \frac{1}{324} \sigma_{\mmin}(\F)$ due to $\kappa(\F) \ge 1$.
Hence, our theoretical investigation implies
that Algorithm \ref{Alg: MPSPA} is as robust to noise as Algorithm \ref{Alg: PSPA}
as long as the input matrix of Algorithm \ref{Alg: MPSPA}
satisfies the conditions imposed in Theorem \ref{Theo: main}.
Let $\A = \M + \N$ for $\M \in \Real^{d \times m}$ and $\N \in \Real^{d \times m}$.
Suppose that $\M$ satisfies Assumption \ref{Assump}.
We see from 
Theorem \ref{Theo: main}, Proposition \ref{Prop: size of singular values} and Remark \ref{Remark: robustness of MPSPA}
that, if $\N$ satisfies $\|\N\|_2 = \epsilon$ with
$\epsilon < O \Bigl(\frac{\sigma_{\mmin}(\F)}{\kappa(\F)^2 \sqrt{k} }\Bigr)$, 
the basis gap by Algorithm \ref{Alg: MPSPA} is up to $O(\kappa(\F) \epsilon)$.

We mention a theoretical study on the robustness of SPA with a prewhitening technique,
which was conducted in \cite{Gil15b}.
In the paper, the algorithm is called a prewhitened SPA for short.
The authors put the assumption of $d = k$ on $\M$ in addition to Assumption \ref{Assump},
and then showed the robustness to noise in Theorem 3.4.
Let $\A = \M + \N$ for $\M \in \Real^{d \times m}$ and $\N \in \Real^{d \times m}$.
Suppose that $\M$ satisfies Assumption \ref{Assump} with $d = k$.
The theorem tells us that,
if the columns $\n_i$ of $\N$ satisfies $\|\n_i\|_2 \le \epsilon$ for $i = 1, \ldots, m$
with $\epsilon < O \Bigl( \frac{\sigma_{\mmin}(\F)}{(m - k + 1 )^{3/2} \sqrt{k}} \Bigr)$,
the basis gap by the prewhitened SPA is up to
$O((m - k + 1 )^{3/2} \kappa(\F) \epsilon )$.

\section{SPA Based Rank-$k$ Approximation}\label{Sec: analysis}

 \subsection{Computational Cost of Algorithm \ref{Alg: SPA based algorithm}} \label{Subsec: cost}
Here, we evaluate the computational cost of Algorithm \ref{Alg: SPA based algorithm}.
Recall that the input is $\A \in \Real^{d \times m}$ and integers $q$ and $k$.
Step 1 performs SPA on the input $\A$ and $k$.
As mentioned at the end of Section \ref{Subsec: SPA}, SPA requires $O(dmk)$.
Step 2 forms $\Y = (\A\A^\top)^q \A(\IC)$.
Let $C_{\A}$ denote the arithmetic operation count 
for multiplying $\A$ by an $m$-dimensional vector,
and $C_{\A^\top}$ that for multiplying $\A^\top$ by a $d$-dimensional vector.
As mentioned in Section 4.2 of \cite{Rok09},
when computing it in the order $(\A (\A^\top \cdots (\A (\A^\top \A(\IC)))))$,
the computation requires $kq(C_{\A} + C_{\A^\top})$ arithmetic operations,
written as $O(dmkq)$.
Step 3 computes the orthonormal bases of the range space of $\Y$.
Those can be obtained by constructing the pivoted QR factorization of $\Y$, 
and the construction requires $O(dk^2)$; see Chapter 5.4.2 of \cite{Gol13} for the details.
Step 4 forms $\B = \Q \Q^\top \A$.
When computing it in the order $(\Q (\Q^\top \A))$,
the computation requires $O(dmk)$.
Consequently, we can see that Algorithm \ref{Alg: SPA based algorithm} requires $O(dmkq)$
and Step 1 of Algorithm \ref{Alg: MPSPA} also requires $O(dmkq)$.
Table \ref{Tab: cost of SPA based algorithm} summarizes the computational cost of each step
for Algorithm \ref{Alg: SPA based algorithm}.

 \begin{table}[h]
  \caption{Computational cost of each step for Algorithm \ref{Alg: SPA based algorithm}}
  \label{Tab: cost of SPA based algorithm}
  \centering
  \begin{tabular}{ccccc}
   \toprule
   Step 1   & Step 2  & Step 3    & Step 4    \\
   \midrule
   $O(dmk)$ & $O(dmkq)$ & $O(dk^2)$ & $O(dmk)$  \\
   \bottomrule
  \end{tabular}
 \end{table}

\subsection{Properties of the Rank-$k$ Approximation Produced by Algorithm \ref{Alg: SPA based algorithm}}
\label{Subsec: properties} 

We will start by reviewing the definition of the range space and orthogonal projections.
Let $\W$ be a \by{d}{k} matrix.
A subspace $\{\b = \W \a  :  \a \in \Real^k \}$
in $\Real^d$ is called the {\it range space of $\W$}, and we denote it by $\mbox{range}(\W)$.
Suppose $d \ge k$ in $\W$.
A \by{d}{d} matrix $\P$ is the {\it orthogonal projection} onto $\mbox{range}(\W)$
if $\P = \P^\top$,  $\P^2 = \P$ and $\mbox{range}(\P) = \mbox{range}(\W)$.
We use the notation $\P_{\W}$ to refer to such a matrix $\P$.
From the definition, it is easy to see that $\I - \P_{\W}$ is 
the orthogonal projection onto the complement of $\mbox{range}(\W)$.
If $\W$ has full column rank,
$\P_{\W}$ is given as  $\P_{\W} = \W (\W^\top \W)^{-1} \W^\top$.

Now let us analyze the error $\|\A-\B\|_2$ and the rank of $\B$
for the input matrix $\A \in \Real^{d \times m}$ and output matrix $\B \in \Real^{d \times m}$
of Algorithm \ref{Alg: SPA based algorithm}.
The output matrix $\B$ takes the form $\B = \Q\Q^\top \A$.
The matrix $\Q\Q^\top$ serves as the orthogonal projection onto $\mbox{range}(\Q)$.
In addition, $\mbox{range}(\Q) = \mbox{range}(\Y)$ holds for $\Y \in \Real^{d \times k}$ in Step 2
since the columns of $\Q$ correspond to the orthonormal bases of
the range space of $\Y$.
Hence, 
\begin{equation*}
 \B = \Q\Q^\top \A = \P_{\Y}\A.
\end{equation*}
Using the SVD of $\A$, 
$\A = \U\Sigmab\V^\top$, and the orthogonal matrix $\U$ in the SVD,
we rewrite $\|\A -\B\|_2$ as
\begin{eqnarray*}
 \|\A - \B \|_2
 &=& \| (\I - \P_{\Y})\A \|_2 \\
 &=& \| \U^\top (\I - \P_{\Y}) \U \U^\top \A \|_2 \\
 &=& \| (\I - \U^\top \P_{\Y} \U) \Sigmab \|_2 \\
 &=& \| (\I - \P_{\U^\top \Y}) \Sigmab \|_2. 
\end{eqnarray*}
The last equality uses the fact that
a \by{d}{d} orthogonal matrix $\U$ and a \by{d}{k} matrix $\Y$ with $d \ge k$ satisfy the relationship 
$\U^\top \P_{\Y} \U = \P_{\U^\top\Y}$; see Proposition 8.4 of \cite{Hal11}.
In a similar way to the above, we rewrite $\sigma_k(\B)$ as 
\begin{eqnarray*}
 \sigma_k(\B) 
 &=& \sigma_k(\P_{\Y}\A) \\
 &=& \sigma_k(\U^\top \P_{\Y} \U \U^\top \A) \\
 &=& \sigma_k(\P_{\U^\top \Y} \Sigmab).  
\end{eqnarray*}
Let
\begin{equation*}
 \Z = \U^\top \Y \in \Real^{d \times k}
\end{equation*}
and partition it into two blocks
\begin{equation*}
 \Z =
  \left[
  \begin{array}{c}
   \Z_1 \\
   \Z_2 
  \end{array}
  \right]
\end{equation*}
where $\Z_1$ is \by{k}{k} and $\Z_2$ is \by{(d-k)}{k}.

We can put an upper bound on $\|\A - \B \|_2 = \|(\I - \P_{\Z}) \Sigmab \|_2$
and a lower bound on $\sigma_k(\B) = \sigma_k(\P_{\Z} \Sigmab)$.
Let $\S$ be a \by{d}{d} diagonal matrix such that the $i$th diagonal element $s_i$ is 
\begin{equation} \label{Eq: s_i} 
  s_i =
  \left\{
  \begin{array}{ll}
   \sigma_i, & i = 1, \ldots, t, \\
   0,        & i = t+1, \ldots, d.
  \end{array}
  \right.
\end{equation}
Here, $\sigma_i$ is the $i$th largest singular value of $\A \in \Real^{d \times m}$ and $t = \min\{d,m\}$.
We partition $\S$ into 
\begin{equation} \label{Eq: S1 and S2}
 \S =
  \left[
  \begin{array}{cc}
   \S_1 & \\
        & \S_2 
  \end{array}
  \right] 
\end{equation}
where $\S_1$ is \by{k}{k} and $\S_2$ is \by{(d-k)}{(d-k)}. 

\begin{lemm} \label{Lemm: low-rank approx by CSS}
 Suppose that the \by{k}{k} upper block matrix 
 $\Z_1$ of $\Z$ is nonsingular. Let $\H = \Z_2 \Z_1^{-1}$.
 \begin{enumerate}[{\normalfont (a)}]
  \item $\|\A - \B \|_2^2 = \|(\I - \P_{\Z}) \Sigmab \|_2^2 \le \|\H \S_1\|_2^2 + \sigma_{k+1}^2$.
  \item $\sigma_k(\B) = \sigma_k(\P_{\Z}\Sigmab) \ge \sigma_k((\I + \H^\top\H)^{-1}\S_1)$.
 \end{enumerate}
\end{lemm}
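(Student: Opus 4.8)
The plan is to reduce the projection $\P_\Z$ to a projection onto a conveniently parametrized subspace, and then read off both bounds from the resulting block structure. First I would observe that, since the upper block $\Z_1$ is nonsingular, right-multiplication by $\Z_1^{-1}$ does not change the range, so $\mbox{range}(\Z) = \mbox{range}(\D)$ for $\D = \Z\Z_1^{-1} = \left[\begin{smallmatrix}\I\\\H\end{smallmatrix}\right] \in \Real^{d\times k}$ with $\H = \Z_2\Z_1^{-1}$; hence $\P_\Z = \P_\D$. Because $\D$ has full column rank $k$, we have $\P_\D = \D(\I+\H^\top\H)^{-1}\D^\top$, using $\D^\top\D = \I+\H^\top\H$. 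I would also pass from $\Sigmab$ to the square matrix $\S$: appending or deleting zero columns changes neither $\|\cdot\|_2$ nor the $k$th singular value, so $\|(\I-\P_\Z)\Sigmab\|_2 = \|(\I-\P_\D)\S\|_2$ and $\sigma_k(\P_\Z\Sigmab) = \sigma_k(\P_\D\S)$. Throughout I use the block partition $\S = \left[\begin{smallmatrix}\S_1 & \\ & \S_2\end{smallmatrix}\right]$ from (\ref{Eq: S1 and S2}), for which $\|\S_2\|_2 = \sigma_{k+1}$.

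For part (a) I would work with the orthogonal complement. Setting $\G = \left[\begin{smallmatrix}-\H^\top\\\I\end{smallmatrix}\right] \in \Real^{d\times(d-k)}$, a direct check gives $\D^\top\G = \0$ and $\dim\mbox{range}(\D) + \dim\mbox{range}(\G) = d$, so $\G$ spans $\mbox{range}(\D)^\perp$ and $\I-\P_\D = \P_\G = \G(\G^\top\G)^{-1}\G^\top$ with $\G^\top\G = \I+\H\H^\top$. The key device is that $\G(\I+\H\H^\top)^{-1/2}$ has orthonormal columns, so left-multiplying any matrix by it preserves the $L_2$ norm; writing $\P_\G\S = \bigl(\G(\I+\H\H^\top)^{-1/2}\bigr)\bigl((\I+\H\H^\top)^{-1/2}\G^\top\S\bigr)$ reduces the problem to bounding $\|(\I+\H\H^\top)^{-1/2}\G^\top\S\|_2$. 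Since $\G^\top\S = [-\H\S_1,\ \S_2]$, I would split this block row and combine $\|[\X,\ \Y]\|_2^2 \le \|\X\|_2^2 + \|\Y\|_2^2$ with $\|(\I+\H\H^\top)^{-1/2}\|_2 \le 1$ (as $\I+\H\H^\top \succeq \I$) to get $\|(\I-\P_\D)\S\|_2^2 \le \|\H\S_1\|_2^2 + \|\S_2\|_2^2 = \|\H\S_1\|_2^2 + \sigma_{k+1}^2$.

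For part (b) I would instead exploit the interlacing property (Lemma \ref{Lemm: interlacing}) directly on $\P_\D\S = \D(\I+\H^\top\H)^{-1}[\S_1,\ \H^\top\S_2]$. Its first $k$ columns form $\D(\I+\H^\top\H)^{-1}\S_1$, whose first $k$ rows in turn form $(\I+\H^\top\H)^{-1}\S_1$. Applying Lemma \ref{Lemm: interlacing} once to pass from $\P_\D\S$ to its leading $k$ columns, and once more (via the transpose, exactly as in the remark following that lemma) to pass to the leading $k$ rows, yields $\sigma_k(\P_\D\S) \ge \sigma_k\bigl(\D(\I+\H^\top\H)^{-1}\S_1\bigr) \ge \sigma_k\bigl((\I+\H^\top\H)^{-1}\S_1\bigr)$, which is the claim.

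I expect the main obstacle to be part (a): the bound must come out in the clean additive form $\|\H\S_1\|_2^2 + \sigma_{k+1}^2$, with no residual $(\I+\H\H^\top)^{-1}$ factors lingering. The complement-plus-isometry reduction is what makes this work, since it converts the projection into an exact norm identity and leaves only the contraction $(\I+\H\H^\top)^{-1/2}$, which is discharged by $\|\cdot\|_2 \le 1$. Part (b), by contrast, is essentially bookkeeping once one recognizes that $(\I+\H^\top\H)^{-1}\S_1$ sits as the leading $k\times k$ block of $\P_\D\S$, so that two applications of interlacing suffice; the only point to watch is that the nonsingularity of $\Z_1$ is exactly what guarantees that $\D$, and hence the relevant blocks, has full column rank.
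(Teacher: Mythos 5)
Your proposal is correct, and while your part (b) is essentially the paper's own argument, your part (a) takes a genuinely different route. The paper proves (a) by squaring the norm to $\|\Sigmab^\top(\I-\P_{\Z})\Sigmab\|_2$ and following Theorem 9.1 of \cite{Hal11}: it establishes the semidefinite ordering $\I - \P_{\Z} \preceq \left[\begin{smallmatrix} \H^\top\H & \ast \\ \ast & \I \end{smallmatrix}\right]$ (using $\I - (\I+\H^\top\H)^{-1} \preceq \H^\top\H$), conjugates by $\Sigmab$, and then invokes two auxiliary facts --- monotonicity of $\|\cdot\|_2$ under $\preceq$ (Lemma \ref{Lemm: norm inequality on a psd relation}) and the block bound $\|\A\|_2 \le \|\X\|_2 + \|\Y\|_2$ for positive semidefinite matrices (Lemma \ref{Lemm: norm inequality on blocks}) --- together with a case analysis on $d \ge m$ versus $d < m$ to identify the blocks $\S_1\H^\top\H\S_1$ and $\sigma_{k+1}^2$. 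You instead parametrize the orthogonal complement explicitly, writing $\I - \P_{\D} = \P_{\G}$ for $\G = \left[\begin{smallmatrix}-\H^\top \\ \I\end{smallmatrix}\right]$ (valid since $\D^\top\G = \0$ and the ranks add to $d$), factor out the isometry $\G(\I+\H\H^\top)^{-1/2}$, and reduce the whole bound to $\|[-\H\S_1,\ \S_2]\|_2^2 \le \|\H\S_1\|_2^2 + \sigma_{k+1}^2$, discharging the leftover contraction with $\|(\I+\H\H^\top)^{-1/2}\|_2 \le 1$. Your route is more elementary and self-contained: it needs neither of the paper's appendix lemmas nor the squaring step, and your zero-column padding of $\Sigmab$ into $\S$ sidesteps the paper's two-case block display. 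What the paper's PSD machinery buys is alignment with the standard analysis of the randomized subspace iteration, whose ordering-based argument carries over to more general settings (other unitarily invariant norms, the oversampled variant of Remark \ref{Remark: randomized subspace iteration}); your complement-basis argument is tailored to the $L_2$ norm but is arguably cleaner for this lemma. For part (b), both you and the paper identify $(\I+\H^\top\H)^{-1}\S_1$ as the leading \by{k}{k} block of the projected matrix and apply the interlacing property of Lemma \ref{Lemm: interlacing} twice (first columns, then rows via transposition), exactly as in the remark following that lemma, so there is no substantive difference there.
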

\begin{proof}

 The above discussion shows that the equalities in (a) and (b) hold.
 The inequality in (a) is proven
 by following the arguments in the proof of Theorem 9.1 in \cite{Hal11}.
 We will give the proof in the Appendix to make the discussion self-contained.
 Now let us turn to the inequality in (b).
 Since $\Z_1$ is supposed to be nonsingular,
 we can write $\Z$ as 
 \begin{equation*}
  \Z =
   \left[
   \begin{array}{c}
    \Z_1 \\
    \Z_2
   \end{array}
   \right] =
   \left[
   \begin{array}{c}
    \I \\
    \H
   \end{array}
   \right]\Z_1.
 \end{equation*}
 Then,
 \begin{eqnarray*}
  \P_{\Z}
  &=& \Z (\Z^\top \Z )^{-1} \Z^\top \nonumber \\
  &=&
   \left[
   \begin{array}{cc}
    (\I + \H^\top \H)^{-1}   & (\I + \H^\top \H)^{-1}\H^\top  \\
    \H(\I + \H^\top \H)^{-1} & \H(\I + \H^\top \H)^{-1}\H^\top 
   \end{array}
   \right]. 
 \end{eqnarray*}
 Since $\P_{\Z} \Sigmab$ takes the form
\begin{equation*}
 \P_{\Z} \Sigmab =
  \left[
  \begin{array}{c|c}
   (\I+\H^\top\H)^{-1} \S_1 & \ast \\
   \hline
    \ast                    & \ast
  \end{array}
  \right],
\end{equation*}
 we see that the \by{k}{k} submatrix consisting of the first $k$ rows and
 the first $k$ columns is $(\I+\H^\top\H)^{-1} \S_1$.
 Thus, as is explained after Lemma \ref{Lemm: interlacing},
 this implies the inequality $\sigma_k(\P_{\Z} \Sigmab) \ge \sigma_k((\I+\H^\top\H)^{-1} \S_1)$.
\end{proof}

Using the SVD $\A = \U\Sigmab\V^\top$ of $\A$,
we rewrite $\Y$ as 
\begin{equation*}
 \Y = (\A\A^\top)^q \A(\IC) = \U \S^{2q} \G.
\end{equation*}
Here, let
\begin{equation} \label{Eq: G}
 \G = \U^\top \A(\IC) \in \Real^{d \times k}
\end{equation}
and partition it into 
\begin{equation*}
 \G =
  \left[
  \begin{array}{c}
   \G_1 \\
   \G_2
  \end{array}
  \right]
\end{equation*}
where $\G_1$ is \by{k}{k} and $\G_2$ is \by{(d-k)}{k}.
Then, $\Z$ can be represented using $\S$ and $\G$ as 
\begin{equation*}
 \Z = \U^\top \Y = \S^{2q} \G.
\end{equation*}
Thus, 
\begin{equation} \label{Eq: Z1 and Z2} 
 \Z_1 = \S_1^{2q} \G_1 \ \mbox{and} \  \Z_2 = \S_2^{2q} \G_2.
\end{equation}
In light of  the above representations,
Lemma \ref{Lemm: low-rank approx by CSS} gives the following result.

\begin{prop} \label{Prop: low-rank approx of CSS}
 Suppose  $k$ is chosen such that $k \le \mbox{rank}(\A)$
 and $\IC$ is chosen such that $\G_1$ is nonsingular.
 Then, the output $\B$ satisfies (a) and (b).
 \begin{enumerate}[{\normalfont (a)}]
  \item
       $\displaystyle \|\A - \B \|_2
       \le
       \sigma_{k+1} \sqrt{ 1 + 
       \biggl(\frac{\sigma_{k+1}}{\sigma_k} \biggr)^{4q-2}
       \|\G_1^{-1}\|_2^2 \|\G_2 \|_2^2 }$.
  \item  $\mbox{rank}(\B) = k$.       
 \end{enumerate}
\end{prop}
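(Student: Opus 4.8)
The plan is to obtain both parts directly from Lemma~\ref{Lemm: low-rank approx by CSS}, combined with the block representation $\Z_1 = \S_1^{2q}\G_1$ and $\Z_2 = \S_2^{2q}\G_2$ recorded in (\ref{Eq: Z1 and Z2}). First I would check that the standing hypothesis of Lemma~\ref{Lemm: low-rank approx by CSS}, namely that $\Z_1$ is nonsingular, is met here. Since $k \le \mbox{rank}(\A)$ forces $\sigma_k > 0$, the diagonal block $\S_1 = \mbox{diag}(\sigma_1,\ldots,\sigma_k)$ is nonsingular, and $\G_1$ is nonsingular by assumption; hence $\Z_1 = \S_1^{2q}\G_1$ is a product of nonsingular matrices and is invertible, so the lemma applies.

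For part~(a), I would start from the inequality $\|\A-\B\|_2^2 \le \|\H\S_1\|_2^2 + \sigma_{k+1}^2$ given by Lemma~\ref{Lemm: low-rank approx by CSS}(a), where $\H = \Z_2\Z_1^{-1}$. Substituting (\ref{Eq: Z1 and Z2}) yields $\H = \S_2^{2q}\G_2\G_1^{-1}\S_1^{-2q}$, so that $\H\S_1 = \S_2^{2q}\G_2\G_1^{-1}\S_1^{\,1-2q}$. Applying submultiplicativity of the $L_2$ norm gives
\begin{equation*}
 \|\H\S_1\|_2 \le \|\S_2^{2q}\|_2\,\|\G_2\|_2\,\|\G_1^{-1}\|_2\,\|\S_1^{\,1-2q}\|_2 .
\end{equation*}
Because $\S_2$ is diagonal with largest entry $\sigma_{k+1}$ we have $\|\S_2^{2q}\|_2 = \sigma_{k+1}^{2q}$, and because $\S_1$ is diagonal with smallest entry $\sigma_k$ while the exponent $1-2q$ is nonpositive, the block $\S_1^{\,1-2q}$ has spectral norm $\sigma_k^{\,1-2q}$. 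Multiplying the two powers telescopes to $\sigma_{k+1}^{2q}\sigma_k^{\,1-2q} = \sigma_{k+1}(\sigma_{k+1}/\sigma_k)^{2q-1}$; squaring and inserting this into the lemma bound produces exactly $\sigma_{k+1}^2\bigl(1 + (\sigma_{k+1}/\sigma_k)^{4q-2}\|\G_1^{-1}\|_2^2\|\G_2\|_2^2\bigr)$, and taking square roots gives~(a).

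For part~(b), I would bound the rank from both sides. Since $\Z_1$ is nonsingular, $\Z = \S^{2q}\G$ has rank $k$, whence $\Y = \U\Z$ has rank $k$ and the orthonormal basis $\Q$ of $\mbox{range}(\Y)$ has exactly $k$ columns; therefore $\B = \Q\Q^\top\A$ satisfies $\mbox{rank}(\B) \le k$. For the reverse inequality I would invoke Lemma~\ref{Lemm: low-rank approx by CSS}(b), $\sigma_k(\B) \ge \sigma_k\bigl((\I+\H^\top\H)^{-1}\S_1\bigr)$: the matrix $(\I+\H^\top\H)^{-1}$ is positive definite and $\S_1$ is nonsingular, so their product is nonsingular and its $k$th singular value is strictly positive, giving $\sigma_k(\B) > 0$ and hence $\mbox{rank}(\B) \ge k$. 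The two bounds together yield $\mbox{rank}(\B)=k$.

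The routine algebra of part~(a) is not the real difficulty; the step that must be handled with care is the bookkeeping of the two spectral norms, and in particular recognising that $\|\S_1^{\,1-2q}\|_2$ is governed by the \emph{smallest} diagonal entry $\sigma_k$ rather than by $\sigma_1$. This is precisely what makes the powers of $\sigma_{k+1}$ and $\sigma_k$ assemble into the contracting ratio $(\sigma_{k+1}/\sigma_k)^{4q-2}$, and it relies on the exponent $1-2q$ being nonpositive; the boundary case $q=0$, where that exponent is positive, is the one point at which this clean telescoping breaks down and would require a separate comment.
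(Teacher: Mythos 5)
Your proof is correct and takes essentially the same route as the paper's: the same verification that $\Z_1 = \S_1^{2q}\G_1$ is nonsingular, the same application of Lemma~\ref{Lemm: low-rank approx by CSS}(a) with the substitution $\H\S_1 = \S_2^{2q}\G_2\G_1^{-1}\S_1^{-2q+1}$ and submultiplicativity, and the same two-sided rank argument for (b) via Lemma~\ref{Lemm: low-rank approx by CSS}(b). Your closing caveat about $q=0$ is in fact a point the paper's own proof silently glosses over --- it too uses $\|\S_1^{-2q+1}\|_2 = \sigma_k^{-2q+1}$, which requires the exponent $-2q+1$ to be nonpositive, i.e.\ $q \ge 1$.
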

\begin{proof}
 $\S_1$ is the \by{k}{k} diagonal matrix specified in (\ref{Eq: S1 and S2}).
 The $i$th diagonal element $s_i$ of $\S_1$ corresponds to
 the $i$th largest singular value $\sigma_i$ of $\A$.
 Since $k$ satisfies  $k \le \mbox{rank}(\A)$,
 $\sigma_1, \ldots, \sigma_k$ are positive, and thus, $\S_1$ is nonsingular.
 In addition, $\G_1$ is supposed to be nonsingular.
 Since $\Z_1 = \S_1^{2q} \G_1$ as shown in (\ref{Eq: Z1 and Z2}),
 the nonsingularity of $\S_1$ and $\G_1$ implies the nonsingularity of $\Z_1$.
 Accordingly, we can use Lemma \ref{Lemm: low-rank approx by CSS}.
 We can now show (a).
 Using $\Z_1 = \S_1^{2q} \G_1$ and $\Z_2 = \S_2^{2q} \G_2$ in (\ref{Eq: Z1 and Z2}),
 we rewrite $\H$ as 
 \begin{equation*}
  \H = \Z_2 \Z_1^{-1} = \S_2^{2q} \G_2 \G_1^{-1} \S_1^{-2q}.
 \end{equation*}
  Combining it and Lemma \ref{Lemm: low-rank approx by CSS}(a),
  we obtain 
  \begin{eqnarray*}
   \|\A-\B\|_2
   &\le& \sqrt{\|\H\S_1 \|_2^2 + \sigma_{k+1}^2} \\
   &=&  \sqrt{\|\S_2^{2q} \G_2 \G_1^{-1} \S_1^{-2q+1} \|_2^2 + \sigma_{k+1}^2} \\
   &\le& \sqrt{ \biggl( \|\S_2^{2q}\|_2 \| \G_2\|_2 \| \G_1^{-1}\|_2 \| \S_1^{-2q+1} \|_2 \biggr)^2
    + \sigma_{k+1}^2} \\
   &=& \sigma_{k+1} \sqrt{ 1 + \biggl(\frac{\sigma_{k+1}}{\sigma_k} \biggr)^{4q-2}
    \|\G_1^{-1}\|_2^2 \|\G_2 \|_2^2 }.
  \end{eqnarray*}
 Next, we show (b).
 Since $\B = \Q\Q^\top \A$ and $\mbox{rank}(\Q) \le k$,
 we have $\mbox{rank}(\B) \le k$.
 Hence, it is sufficient to show $\mbox{rank}(\B) \ge k$.
 Lemma~\ref{Lemm: low-rank approx by CSS}(b) gives
 the inequality $\sigma_k(\B) \ge \sigma_k((\I+\H\H^\top)^{-1}\S_1)$.
 Here, note that a matrix $(\I+\H\H^\top)^{-1}\S_1$ is  \by{k}{k}.
 As the above discussion shows, 
 since $\S_1$ is nonsingular,  so is the matrix $(\I+\H^\top\H)^{-1}\S_1$.
 Accordingly, $\sigma_k(\B)$ is positive and thus $\mbox{rank}(\B) \ge k$.
 Thus, we conclude that $\mbox{rank}(\B)=k$.
 \end{proof}

Now let us find a lower bound on $\sigma_{\mmin}(\G_1)$ and
an upper bound on $\sigma_{\mmax}(\G_2)$,
since $\|\G_2\|_2 = \sigma_{\mmax}(\G_2)$ and, if $\sigma_{\mmin}(\G_1)$ is positive, 
$\G_1$ is nonsingular and $\|\G_1^{-1}\|_2 = 1 / \sigma_{\mmin}(\G_1)$.

\begin{prop} \label{Prop: bounds on G1 and G2}
 We can bound $\sigma_{\mmin}(\G_1)$ from below and $\sigma_{\mmax}(\G_2)$ from above
 using $\sigma_{k+1}$ and $\sigma_{\mmin}(\A(\IC))$.
 \begin{enumerate}[{\normalfont (a)}]
  \item  $\sigma_{\mmax}(\G_2) \le \sigma_{k+1}$.
  \item  $\sigma_{\mmin}(\G_1) \ge  \max\{0, \sigma_{\mmin}(\A(\IC))  -  \sigma_{k+1}\}$.
 \end{enumerate}
\end{prop}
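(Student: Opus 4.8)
The plan is to work entirely within the SVD $\A = \U\Sigmab\V^\top$ and to read off $\G_1,\G_2$ from the block structure of $\G = \U^\top\A(\IC)$. Write $\U = [\U_k, \wt{\U}]$, where $\U_k = [\u_1,\ldots,\u_k]$ is as in (\ref{Eq: Uk and Vk}) and $\wt{\U} = [\u_{k+1},\ldots,\u_d] \in \Real^{d\times(d-k)}$ collects the remaining columns of $\U$. Partitioning $\G = \U^\top\A(\IC)$ into its top $k$ and bottom $d-k$ rows then identifies $\G_1 = \U_k^\top\A(\IC)$ and $\G_2 = \wt{\U}^\top\A(\IC)$. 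Throughout I would use two elementary facts: multiplication by the orthogonal matrix $\U$ (or by $\wt{\U}$, which has orthonormal columns) does not increase the spectral norm and leaves singular values invariant, and deleting columns from a matrix cannot increase its spectral norm, since a column submatrix acts on zero-padded vectors.

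For (a), the key observation is that the last $d-k$ columns of $\U$ annihilate the best rank-$k$ approximation: $\wt{\U}^\top\A_k = \wt{\U}^\top\U_k\Sigmab_k\V_k^\top = \0$ because $\wt{\U}^\top\U_k = \0$. Hence $\G_2 = \wt{\U}^\top\A(\IC) = \wt{\U}^\top(\A-\A_k)(\IC)$, and therefore
\begin{equation*}
 \sigma_{\mmax}(\G_2) = \|\G_2\|_2 \le \|\wt{\U}\|_2\,\|(\A-\A_k)(\IC)\|_2 \le \|\A-\A_k\|_2 = \sigma_{k+1},
\end{equation*}
where I use $\|\wt{\U}\|_2 = 1$, the column-submatrix bound, and the Eckart-Young-Mirsky identity $\|\A-\A_k\|_2 = \sigma_{k+1}$ recalled in Section \ref{Subsec: SVD and low-rank approx}.

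For (b), I would begin from the orthogonal invariance $\sigma_{\mmin}(\A(\IC)) = \sigma_k(\A(\IC)) = \sigma_k(\U^\top\A(\IC)) = \sigma_k(\G)$, and then split $\G$ as
\begin{equation*}
 \G = \left[\begin{array}{c}\G_1\\ \0\end{array}\right] + \left[\begin{array}{c}\0\\ \G_2\end{array}\right].
\end{equation*}
Applying Lemma \ref{Lemm: perturbation on singular values} with the first summand as the base matrix and the second as the perturbation, and noting that appending zero rows does not change singular values, gives $|\sigma_k(\G) - \sigma_k(\G_1)| \le \|\G_2\|_2$, hence $\sigma_{\mmin}(\G_1) = \sigma_k(\G_1) \ge \sigma_{\mmin}(\A(\IC)) - \sigma_{\mmax}(\G_2)$. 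Combining this with part (a) yields $\sigma_{\mmin}(\G_1) \ge \sigma_{\mmin}(\A(\IC)) - \sigma_{k+1}$, and since $\sigma_{\mmin}(\G_1) \ge 0$ always holds, the stated bound with $\max\{0,\cdot\}$ follows.

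The argument is essentially bookkeeping once the block decomposition $\G_1 = \U_k^\top\A(\IC)$, $\G_2 = \wt{\U}^\top\A(\IC)$ is in place. The only step requiring care is identifying $\|\A-\A_k\|_2 = \sigma_{k+1}$ together with the annihilation $\wt{\U}^\top\A_k = \0$, which is where the rank-$k$ truncation structure is actually used; I expect no genuine obstacle beyond tracking the block dimensions and the direction of the singular value perturbation inequality.
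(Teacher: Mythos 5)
Your proposal is correct and takes essentially the same approach as the paper: your part (b) is verbatim the paper's argument (Lemma \ref{Lemm: perturbation on singular values} applied to the zero-padded $\G_1$ with the zero-padded $\G_2$ as perturbation, combined with $\sigma_i(\G)=\sigma_i(\A(\IC))$ and part (a)). In part (a) your bookkeeping differs only cosmetically: you bound $\|\G_2\|_2$ via the annihilation $\wt{\U}^\top\A_k=\0$, the column-submatrix norm bound, and Eckart--Young--Mirsky, while the paper writes $\A(\IC)=\A\E$ for the selection matrix $\E=[\e_i : i\in\IC]$ and reads off the same $\sigma_{k+1}$ bound directly from the SVD --- both isolate the tail singular values in exactly the same way.
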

\begin{proof} 
 We show (a) first. 
 Recall that
 $\G$ in (\ref{Eq: G})
 takes the form $\G = \U^\top \A(\IC) \in \Real^{d \times k}$ and 
 $\G_1$ and $\G_2$ are the \by{k}{k} upper and \by{(d-k)}{k} lower blocks.
 We partition $\U \in \Real^{d \times d}$ into two blocks
 \begin{equation*}
  \U =
   \left[
   \begin{array}{c}
    \U_1 \\
    \U_2
   \end{array}
   \right]
 \end{equation*}
 where $\U_1$ is \by{k}{d} and $\U_2$ is \by{(d-k)}{d}.
 We rewrite $\G_2$ as 
 \begin{eqnarray*} \label{Eq: G2_1}
  \left[
  \begin{array}{c}
   \0 \\
   \G_2
  \end{array}
  \right]
  &=&
  \left[
  \begin{array}{c}
   \0 \\
   \U_2^\top
  \end{array}
  \right]
  \A(\IC) \\
  &=&
  \left[
  \begin{array}{c}
   \0 \\
   \U_2^\top
  \end{array}
  \right]
  \A\E  \\
  &=&
  \left[
  \begin{array}{c}
   \0 \\
   \U_2^\top
  \end{array}
  \right]
  \U \Sigmab \V^\top \E \\
    &=&
  \left[
  \begin{array}{c|c}
   \0_{k \times k}      & \0_{k \times (d-k)} \\
   \hline
   \0_{(d-k) \times k}  & \I_{(d-k) \times (d-k)} 
  \end{array}
  \right]
   \Sigmab \V^\top \E. 
 \end{eqnarray*}
 Here, let $\E = [\e_i : i \in \IC] \in \Real^{m \times k}$ such that $\A(\IC) = \A\E$.
 In the third equality, we have used the SVD $\A = \U\Sigmab \V^\top$ of $\A$.
 Accordingly, $\sigma_{\mmax}(\G_2)$ is bounded from above such that  
 \begin{eqnarray*}
  \sigma_{\mmax}(\G_2) = \|\G_2\|_2
   &=&
   \biggl\| \left[
   \begin{array}{c}
    \0  \\
    \G_2
   \end{array}
   \right] \biggr\|_2 \\
   &=& 
   \biggl\| 
  \left[
  \begin{array}{c|c}
   \0_{k \times k}      & \0_{k \times (d-k)} \\
   \hline
   \0_{(d-k) \times k}  & \I_{(d-k) \times (d-k)} 
  \end{array}
  \right]
   \Sigmab \V^\top \E \biggr\|_2 \\
  &\le&
  \biggl\| 
  \left[
  \begin{array}{c|c}
   \0_{k \times k}      & \0_{k \times (d-k)} \\
   \hline
   \0_{(d-k) \times k}  & \I_{(d-k) \times (d-k)}  
  \end{array}
  \right]
  \Sigmab \biggr\|_2
  \|\V^\top\|_2 \| \E \|_2 = \sigma_{k+1}.
 \end{eqnarray*}
 Next, we show (b).
 Lemma \ref{Lemm: perturbation on singular values} and (a) of this proposition imply
 \begin{eqnarray*}
  \biggl|
   \sigma_i(\G) -
   \sigma_i
   ( \left[
   \begin{array}{c}
    \G_1 \\
    \0
   \end{array}
   \right])
   \biggr|
   =
   | \sigma_i(\G) - \sigma_i(\G_1) |
   \le
   \biggl\|
   \left[
   \begin{array}{c}
    \0 \\
    \G_2
   \end{array}
   \right]\biggr\|_2 = \|\G_2\|_2
   \le \sigma_{k+1}
 \end{eqnarray*}
 for $i=1, \ldots, k$.
 In addition, $\sigma_i(\G) = \sigma_i(\A(\IC))$  holds since $\U$ is orthogonal.
 Accordingly, $\sigma_{\mmin}(\G_1)$ is bounded from below such that 
 $\sigma_{\mmin}(\G_1) \ge \max\{0, \sigma_{\mmin}(\A(\IC)) - \sigma_{k+1} \}$.
\end{proof}

Define $\rho$ as
\begin{equation*}
 \rho = \sigma_{\min}(\A(\IC)) - \sigma_{k+1}. 
\end{equation*}
If $\rho$ is positive, then $\G_1$ is nonsingular and $k \le \mbox{rank}(\A)$.
The former assertion follows directly from Proposition  \ref{Prop: bounds on G1 and G2}(b).
We need to verify the latter one.
Note here that $\IC$ is a set of $k$ column indices in $\A$.
The positivity of $\rho$ gives the inequality $\sigma_{\mmin}(\A(\IC)) > \sigma_{k+1} \ge 0$
and the positivity of $\sigma_{\mmin}(\A(\IC))$ means that
there are $k$ linearly independent columns in $\A$.
Accordingly, $k$ must satisfy $k \le \mbox{rank}(\A)$.
On the basis of the above observation,
we obtain the following corollary from
Propositions \ref{Prop: low-rank approx of CSS} and \ref{Prop: bounds on G1 and G2}.
\begin{coro} \label{Coro: low-rank approx of CSS by rho}
 Suppose that $\IC$ is chosen such that $\rho > 0$.
 Then, the output $\B$ satisfies (a) and (b).
 \begin{enumerate}[{\normalfont (a)}]
  \item $\displaystyle \| \A - \B \|_2
	\le
	\sigma_{k+1} \sqrt{ 1 + \biggl(\frac{\sigma_{k+1}}{\rho} \biggr)^2
	\biggl(\frac{\sigma_{k+1}}{\sigma_k} \Biggr)^{4q-2}}$.
 \item $\mbox{rank}(\B) = k$.	
 \end{enumerate}
\end{coro}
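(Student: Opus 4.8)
The plan is to obtain the corollary as a direct consequence of Proposition \ref{Prop: low-rank approx of CSS} together with the two singular-value bounds in Proposition \ref{Prop: bounds on G1 and G2}; the groundwork laid in the paragraph preceding the statement already supplies the hypotheses we need.

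First I would check that the hypotheses of Proposition \ref{Prop: low-rank approx of CSS} hold under the assumption $\rho > 0$. Since $\rho = \sigma_{\mmin}(\A(\IC)) - \sigma_{k+1}$, positivity of $\rho$ gives $\sigma_{\mmin}(\A(\IC)) > \sigma_{k+1} \ge 0$. By Proposition \ref{Prop: bounds on G1 and G2}(b) this yields $\sigma_{\mmin}(\G_1) \ge \rho > 0$, so $\G_1$ is nonsingular. Moreover, the positivity of $\sigma_{\mmin}(\A(\IC))$ means that the $d \times k$ submatrix $\A(\IC)$ has full column rank $k$, hence $\A$ has at least $k$ linearly independent columns and $k \le \mbox{rank}(\A)$. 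Both hypotheses of Proposition \ref{Prop: low-rank approx of CSS} are therefore in force.

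Next I would convert the bounds of Proposition \ref{Prop: bounds on G1 and G2} into bounds on the operator norms appearing in Proposition \ref{Prop: low-rank approx of CSS}(a). Because $\G_1$ is nonsingular, $\|\G_1^{-1}\|_2 = 1/\sigma_{\mmin}(\G_1) \le 1/\rho$, while $\|\G_2\|_2 = \sigma_{\mmax}(\G_2) \le \sigma_{k+1}$. Multiplying these gives $\|\G_1^{-1}\|_2^2 \|\G_2\|_2^2 \le (\sigma_{k+1}/\rho)^2$. Substituting this estimate into the inequality of Proposition \ref{Prop: low-rank approx of CSS}(a) reproduces part (a) verbatim, and part (b) is inherited directly from Proposition \ref{Prop: low-rank approx of CSS}(b).

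There is no genuine obstacle here: the argument is a routine assembly of earlier results. The only points that demand a moment of care are verifying the rank condition $k \le \mbox{rank}(\A)$ from $\rho > 0$ rather than assuming it, and noting that when $\rho > 0$ the lower bound of Proposition \ref{Prop: bounds on G1 and G2}(b) simplifies from $\max\{0, \rho\}$ to $\rho$, so that the reciprocal $1/\sigma_{\mmin}(\G_1)$ is indeed controlled by $1/\rho$.
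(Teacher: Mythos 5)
Your proposal is correct and follows essentially the same route as the paper: verify that $\rho > 0$ forces $\G_1$ to be nonsingular (via Proposition \ref{Prop: bounds on G1 and G2}(b)) and $k \le \mbox{rank}(\A)$ (via full column rank of $\A(\IC)$), then substitute the bounds $\|\G_1^{-1}\|_2 \le 1/\rho$ and $\|\G_2\|_2 \le \sigma_{k+1}$ into Proposition \ref{Prop: low-rank approx of CSS}. Nothing is missing; the care you take over the rank condition and the simplification of $\max\{0,\rho\}$ to $\rho$ matches the paper's own discussion.
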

We can see from Corollary \ref{Coro: low-rank approx of CSS by rho} that 
one of the key issues in Algorithm \ref{Alg: SPA based algorithm} 
lies in the method for finding a set $\IC$ of $k$ column indices from $\A$ in Step 1.
In order to reduce the approximation errors of the algorithm,
the corollary tells us that ideally Step 1 should find the $\IC$
among all of the candidates that maximizes $\sigma_{\mmin}(\A(\IC))$.
However, as mentioned in Section \ref{Subsec: SPA}, 
the problem of finding such an $\IC$ is computationally expensive and
has been shown in \cite{Civ09} to be NP-hard.
We therefore use SPA for solving the problem, which  works as a greedy heuristic for it.

Let us examine the value of $\rho = \sigma_{\mmin}(\A(\IC)) - \sigma_{k+1}$
for the output $\IC$ of SPA.
Let $\A = \M + \N$ for $\M \in \Real^{d \times m}$ satisfying Assumption \ref{Assump}
and $\N \in \Real^{d \times m}$.
The singular value perturbation result described in Lemma \ref{Lemm: perturbation on singular values}
gives the inequality $|\sigma_{k+1}(\A) - \sigma_{k+1}(\M)| \le \|\N\|_2$.
Since $\sigma_{k+1}(\M) = 0$, we have $\sigma_{k+1} = \sigma_{k+1}(\A) \le \|\N\|_2$.
SPA has been shown to be robust to noise. If $\|\N\|_2$ is small,
it can find a column index set $\IC$ such that $\A(\IC)$ is close to $\F$ of $\M$.
Accordingly, $\rho$ should be positive if $\|\N\|_2$ is much smaller than $\sigma_{\mmin}(\F)$.
We can justify this argument thanks to 
Theorem \ref{Theo: robustness of SPA} describing the robustness of SPA.

\begin{prop} \label{Prop: rho}
 Let $\A = \M + \N$ for $\M \in \Real^{d \times m}$ and $\N \in \Real^{d \times m}$.
 Suppose that $k \ge 2$,  $\M$ satisfies Assumption \ref{Assump}, and $\N$ satisfies
\begin{equation*}
 \|\N\|_2 < \min \Biggl\{ \frac{1}{2\sqrt{k-1}}, \frac{1}{4} \Biggr\}
  \frac{\sigma_{\mmin}(\F)}{1 + 80 \kappa(\F)^2}.
\end{equation*}
 Then, the output $\IC$ of Algorithm \ref{Alg: SPA} satisfies
 \begin{equation*}
  \rho = \sigma_{\mmin}(\A(\IC)) - \sigma_{k+1}  > \frac{323 - 81 \sqrt{5}}{324} \sigma_{\mmin}(\F) > 0.
 \end{equation*}
\end{prop}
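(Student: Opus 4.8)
The plan is to chain the robustness guarantee for SPA (Theorem~\ref{Theo: robustness of SPA}) together with the singular value perturbation bound (Lemma~\ref{Lemm: perturbation on singular values}), and then substitute the hypothesized bound on $\|\N\|_2$ to extract the stated constant.

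First I would set $\epsilon = \|\N\|_2$. Each column satisfies $\|\n_i\|_2 \le \|\N\|_2 = \epsilon$, and the hypothesized bound on $\|\N\|_2$ in the proposition is exactly the bound required of $\epsilon$ in Theorem~\ref{Theo: robustness of SPA} (note $80\kappa(\F)^2 + 1 = 1 + 80\kappa(\F)^2$). Hence that theorem applies and yields a permutation $\pi$ with $\|\a_{i_j} - \f_{\pi(j)}\|_2 \le (80\kappa(\F)^2 + 1)\epsilon$ for $j = 1, \ldots, k$. Writing $\F_\pi = [\f_{\pi(1)}, \ldots, \f_{\pi(k)}]$, which is a column permutation of $\F$ and so satisfies $\sigma_{\mmin}(\F_\pi) = \sigma_{\mmin}(\F)$, the matrix $\A(\IC) - \F_\pi$ has columns $\a_{i_j} - \f_{\pi(j)}$. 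Bounding the spectral norm by the Frobenius norm then gives $\|\A(\IC) - \F_\pi\|_2 \le \|\A(\IC) - \F_\pi\|_F \le \sqrt{k}\,(80\kappa(\F)^2 + 1)\epsilon$.

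Next I would apply Lemma~\ref{Lemm: perturbation on singular values} with base matrix $\F_\pi$ and perturbation $\A(\IC) - \F_\pi$. Since both are \by{d}{k}, their smallest singular value is the $k$th, so $\sigma_{\mmin}(\A(\IC)) \ge \sigma_{\mmin}(\F) - \sqrt{k}\,(80\kappa(\F)^2 + 1)\epsilon$. Combining this with $\sigma_{k+1} \le \|\N\|_2 = \epsilon$, which is inequality (\ref{Eq: bound on sigma_k+1}) from the proof of Proposition~\ref{Prop: size of singular values}, yields
\begin{equation*}
 \rho = \sigma_{\mmin}(\A(\IC)) - \sigma_{k+1} \ge \sigma_{\mmin}(\F) - \bigl(\sqrt{k}\,(80\kappa(\F)^2 + 1) + 1\bigr)\epsilon.
\end{equation*}
Writing $c = 1 + 80\kappa(\F)^2$ and $\mu = \min\{1/(2\sqrt{k-1}),\, 1/4\}$, the hypothesis reads $\epsilon < \mu\,\sigma_{\mmin}(\F)/c$, and substituting gives $\rho > \sigma_{\mmin}(\F)\bigl(1 - \mu\sqrt{k} - \mu/c\bigr)$.

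The last step, which carries the real content, is pinning down the universal constant. I would show $\mu\sqrt{k} \le \sqrt{5}/4$ for every $k \ge 2$ by a case split: for $2 \le k \le 5$ one has $\mu = 1/4$ and $\mu\sqrt{k} = \sqrt{k}/4$, maximized at $k = 5$; for $k \ge 5$ one has $\mu = 1/(2\sqrt{k-1})$ and $\mu\sqrt{k} = \tfrac{1}{2}\sqrt{k/(k-1)}$, which is decreasing in $k$ and hence again maximized at $k = 5$. Both cases give $\sqrt{5}/4$. For the remaining term I would use $\mu \le 1/4$ together with $\kappa(\F) \ge 1$, which forces $c = 1 + 80\kappa(\F)^2 \ge 81$, so that $\mu/c \le (1/4)/81 = 1/324$ --- this is precisely the term producing the $1/324$ in the target. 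These bounds combine to
\begin{equation*}
 \rho > \sigma_{\mmin}(\F)\Bigl(1 - \frac{\sqrt{5}}{4} - \frac{1}{324}\Bigr) = \frac{323 - 81\sqrt{5}}{324}\,\sigma_{\mmin}(\F),
\end{equation*}
and a numerical check confirms $(323 - 81\sqrt{5})/324 > 0$. I expect the only genuine obstacle to be the case analysis establishing $\max_{k \ge 2} \mu\sqrt{k} = \sqrt{5}/4$ and the observation that $\kappa(\F) \ge 1$ yields $c \ge 81$ and thus exactly $1/324$; everything else is a routine chaining of the two cited lemmas.
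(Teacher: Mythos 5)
Your proposal is correct and follows essentially the same route as the paper's own proof: apply Theorem~\ref{Theo: robustness of SPA} to the output of SPA, convert the column-wise bounds into a spectral-norm bound on $\A(\IC) - \F\Pib$ with a factor $\sqrt{k}$, invoke Lemma~\ref{Lemm: perturbation on singular values} together with $\sigma_{k+1} \le \|\N\|_2$, and finish with the same case analysis giving $\mu\sqrt{k} \le \sqrt{5}/4$ and $\mu/c \le 1/324$ via $\kappa(\F) \ge 1$. The only negligible omission is that the final strict positivity also uses $\sigma_{\mmin}(\F) > 0$, which follows from Assumption~\ref{Assump}(a), not just the positivity of the numerical coefficient $(323 - 81\sqrt{5})/324$.
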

\begin{proof}
 We choose some \by{k}{k} permutation matrix $\Pib$.
 Lemma \ref{Lemm: perturbation on singular values} gives the inequality 
 $| \sigma_{\mmin}(\A(\IC)) - \sigma_{\mmin}(\F \Pib)  | \le \|\D\|_2 $
 wherein we let $\D = \A(\IC) - \F \Pib \in \Real^{d \times k}$.
 Because $\sigma_{\mmin}(\F \Pib)  = \sigma_{\mmin}(\F)$,
 this inequality leads to $\sigma_{\mmin}(\A(\IC)) \ge \sigma_{\mmin}(\F) - \|\D\|_2$.
 As explained above,  $\sigma_{k+1} \le \|\N\|_2$ holds.
 Hence, 
 \begin{eqnarray} \label{Eq: bound on rho} 
  \rho
   = \sigma_{\mmin}(\A(\IC)) - \sigma_{k+1}
   &\ge& \sigma_{\mmin}(\F) - \|\D\|_2 - \|\N\|_2 \nonumber \\
   &\ge& \sigma_{\mmin}(\F) - \sqrt{k} \max_{j=1,\ldots,k}\|\d_j \|_2 - \|\N\|_2.
 \end{eqnarray}
 The second inequality uses the fact that
 the inequality $\|\D\|_2 \le \sqrt{k} \max_{j=1, \ldots, k} \|\d_j \|_2$ holds
 for the columns $\d_j$ of $\D$.
 Here, $\d_j$ is given as $\d_j =  \a_{i_j} - \f_{\pi(j)}$
 for the elements $i_1, \ldots, i_k$ of $\IC$ and
 a permutation $\pi$ corresponding to $\Pib$.

 This proposition supposes
 $\|\N\|_2 < \min\{\frac{1}{2\sqrt{k-1}}, \frac{1}{4}\} \frac{\sigma_{\mmin}(\F)}{1 + 80 \kappa(\F)^2}$.
 Since $\|\n_i\|_2 \le \|\N\|_2$ holds for any column $\n_i$ of $\N$,
 this proposition satisfies all the conditions required in Theorem \ref{Theo: robustness of SPA}.
 Let $\IC$ be the output of SPA and $i_1, \ldots, i_k$ be the elements.
 The theorem ensures that there is a permutation $\pi$ such that
 $\|\d_j\|_2 = \|\a_{i_j} - \f_{\pi(j)}\|_2  \le  \sigma_{\mmin}(\F) \min\{\frac{1}{2\sqrt{k-1}}, \frac{1}{4}\}$
 holds for every $j = 1, \ldots, k$.
 Using it, we can put a bound on $\sqrt{k} \max_{j=1, \ldots, k} \| \d_j \|_2$. That is, 
 \begin{equation*}
  \sqrt{k} \max_{j=1,\ldots, k} \|\d_j\|_2
   \le
    \sigma_{\mmin}(\F) \sqrt{k} \min \biggl\{\frac{1}{2\sqrt{k-1}}, \frac{1}{4} \biggr\}
   =
   \left\{
   \begin{array}{ll}
    \frac{\sqrt{k}}{4} \sigma_{\mmin}(\F)           & \mbox{for} \ k \le 4, \\
    \frac{\sqrt{k}}{2\sqrt{k-1}} \sigma_{\mmin}(\F) & \mbox{for} \ k \ge 5.
   \end{array}
   \right.
 \end{equation*}
 Obviously, $\frac{\sqrt{k}}{4} \le \frac{1}{2}$ for $k \le 4 $.
 Also, $\frac{\sqrt{k}}{2\sqrt{k-1}} \le \frac{\sqrt{5}}{4}$ for $k \ge 5$
 since the function $f(x) = \sqrt{\frac{x}{x-1}}$ is monotonically decreasing  for $x > 1$.
 Therefore, 
 \begin{equation} \label{Eq: bound on D} 
  \sqrt{k} \max_{j=1, \ldots, k} \| \d_j \|_2 \le \frac{\sqrt{5}}{4}\sigma_{\mmin}(\F).
 \end{equation}
 From $\kappa(\F) \ge 1$, we get
 \begin{equation} \label{Eq: bound on N} 
  \|\N\|_2 < \min \biggl\{\frac{1}{2\sqrt{k-1}}, \frac{1}{4}\biggr\}
   \frac{\sigma_{\mmin}(\F)}{1 + 80 \kappa(\F)^2} \le \frac{1}{324}\sigma_{\mmin}(\F).
 \end{equation}
 Combining (\ref{Eq: bound on rho}),  
 (\ref{Eq: bound on D}) and (\ref{Eq: bound on N}),
 we obtain
 \begin{eqnarray*}
  \rho = \sigma_{\mmin}(\A(\IC)) - \sigma_{k+1}
   > \frac{323 - 81\sqrt{5}}{324} \sigma_{\mmin}(\F) > 0.
 \end{eqnarray*}
 The last inequality follows from the fact that 
 $\sigma_{\mmin}(\F)$ is positive given Assumption \ref{Assump}(a).
\end{proof}

Theorem \ref{Theo: main} follows 
from Corollary \ref{Coro: low-rank approx of CSS by rho} and Proposition \ref{Prop: rho}.

\begin{proof}[{\bf (Proof of Theorem \ref{Theo: main})}]
 The conditions supposed in this theorem are the same as those in Proposition \ref{Prop: rho}.
 Thus, we can use the proposition.
 Since it ensures that $\rho$ is positive,
 we can also use Corollary \ref{Coro: low-rank approx of CSS by rho}.
 It follows from (b) of the corollary 
 that the output $\B$ of Algorithm \ref{Alg: SPA based algorithm} satisfies $\mbox{rank}(\B) = k$.
 As explained above, we have $\sigma_{k+1} \le \|\N\|_2$.
 In addition, we have $\|\N\|_2 < \frac{1}{324} \sigma_{\mmin}(\F)$,
 as shown in (\ref{Eq: bound on N}).
 Hence, $\sigma_{k+1} \le \|\N\|_2 < \frac{1}{324} \sigma_{\mmin}(\F)$.
 Since $\rho > \frac{323 - 81\sqrt{5}}{324} \sigma_{\mmin}(\F) > 0$,
 we get 
 \begin{equation*} 
  \frac{\sigma_{k+1}}{\rho} < \frac{1}{323 - 81 \sqrt{5}} < \frac{1}{142}.
 \end{equation*}
 In light of the above inequality, 
 it follows from (a) of the corollary that 
 the output $\B$ of Algorithm \ref{Alg: SPA based algorithm}
 satisfies the inequality of (a) in this theorem.
\end{proof}

\subsection{Relationship with the Randomized Subspace Iteration}
\label{Subsec: relation with the randomized algorithm}

A low-rank matrix approximation plays a fundamental and essential role in data science.
For instance, it serves as one of main tools
in text mining \cite{Pap00, Man08} and collaborative filtering \cite{Can10, Kes10, Gun13}.
A lot of algorithms have been developed to compute it.
Among them, the randomized subspace iteration, studied in \cite{Rok09, Hal11, Men11, Woo14, Gu15},
has attracted growing attention from researchers and practitioners,
since the randomized algorithm is fast and scalable,
and can provide highly accurate low-rank approximations to matrices.
It was proposed by Rokhlin, Szlam and Tygert in \cite{Rok09},
and a further study was made by Halko, Martinsson and Tropp in \cite{Hal11},
Woodruff in \cite{Woo14}, and Gu in \cite{Gu15}.
An empirical study presented by Menon and Elkan in \cite{Men11}
indicates that it is superior in speed and approximation accuracy.

Algorithm \ref{Alg: rand subspace iteration} describes each step of the randomized algorithm
in cases where the oversampling parameter is set to zero.
We explain the parameter in Remark \ref{Remark: randomized subspace iteration}.
Here, a {\it Gaussian matrix}  is a matrix such that
each element is drawn from a Gaussian distribution with mean zero and variance one.
If we replace $\Omegab \in \Real^{m \times k}$ in Step 1
with $\E = [\e_i : i \in \IC] \in \Real^{m \times k}$
by using the output $\IC$ of SPA on the input $\A$ and $k$,
then the output of Algorithm \ref{Alg: rand subspace iteration}
coincides with that of Algorithm \ref{Alg: SPA based algorithm}.

\begin{algorithm}[h]
 \caption{Randomized subspace iteration \cite{Rok09, Hal11, Woo14, Gu15}} \label{Alg: rand subspace iteration}
 \smallskip
 Input: $\A \in \Real^{d \times m}$ and integers $q$ and $k$ such that $0 \le q$
 and $0 < k  \le \min\{d, m\}$ \\
 Output: $\B \in \Real^{d \times m}$
 \begin{enumerate}[1:]
  \item Form an \by{m}{k} Gaussian matrix $\Omegab$.
  \item Form $\Y = (\A \A^\top)^q \A \Omegab \in \Real^{d \times k}$.
  \item Compute the orthonormal bases of the range space of $\Y$ 
	and form a matrix $\Q$ by stacking them in a column.
  \item Form $\B = \Q\Q^\top\A $ and return it.
 \end{enumerate}
\end{algorithm}

The authors in \cite{Rok09, Hal11, Woo14, Gu15} gave a probabilistic analysis of the randomized algorithm.
Theorem 5.8 in \cite{Gu15} shown by Gu tells us that 
the output $\B$ of Algorithm \ref{Alg: rand subspace iteration} satisfies 
\begin{equation*}
 \|\A - \B \|_2  \le \sigma_{k+1} \sqrt{1 + k c^2 \biggl(\frac{\sigma_{k+1}}{\sigma_k} \biggr)^{4q}}
\end{equation*}
with an exception probability of at most $\Delta$.
Here, $c$ is a positive real value determined by $\Delta$, $k$ and $m$,
and it becomes large as $\Delta$ approaches zero.
In common with an error bound in Theorem \ref{Theo: main},
we see that the error is bounded by the sum of $\sigma_{k+1}$ and an extra term, and
the extra term decreases toward zero as $q$ increases,
if $\sigma_{k+1} / \sigma_k < 1$.

Algorithm \ref{Alg: rand subspace iteration} has the same running time
as Algorithm \ref{Alg: SPA based algorithm}.
To see this, note that
the tasks imposed in Algorithm \ref{Alg: rand subspace iteration} are the same as
those in Algorithm \ref{Alg: SPA based algorithm}
except that Algorithm \ref{Alg: rand subspace iteration} constructs $\A \Omegab$
by multiplying $\A \in \Real^{d \times m}$ by $\Omegab \in \Real^{m \times k}$,
while Algorithm \ref{Alg: SPA based algorithm} constructs $\A(\IC)$
by performing SPA on input $\A \in \Real^{d \times m}$ and a positive integer $k$.
The matrix multiplication $\A\Omegab$ requires $O(dmk)$ arithmetic operations. 
This is the same running time of  SPA.
We thus see that Algorithm \ref{Alg: rand subspace iteration} runs in $O(dmkq)$.

\begin{remark} \label{Remark: randomized subspace iteration}
 The algorithm studied in \cite{Rok09, Hal11, Gu15} is more general than
 Algorithm \ref{Alg: rand subspace iteration}.
 It adds an input parameter $\ell$ such that $0 < k \le \ell$,
 and it forms an \by{m}{\ell} Gaussian matrix $\Omegab$
 in Step 1 of Algorithm \ref{Alg: rand subspace iteration}.
 In that case, $\Y$ of Step 2 is a \by{d}{\ell} matrix.
 After Step 3, it appends the following two steps.
 \begin{itemize}
  \item Form $\P = \Q^\top \A$ and compute the top-$k$ truncated SVD $\P_k = \U_k \Sigmab_k \V_k^\top$
	of $\P$.
  \item Form $\B = \Q \P_k$ and return it.
\end{itemize}
 The gap $p = \ell - k$ is referred to as an {\it oversampling parameter}.
 The authors in \cite{Rok09, Hal11, Gu15} analyzed the approximation error of this algorithm.
 The analysis suggests that a positive oversampling parameter has the effect
 of reducing the approximation error.
\end{remark}

\section{Experiments} \label{Sec: experiments}
We implemented algorithms discussed so far on MATLAB
to assess their practical performance.
The following is the list of the algorithms.

\begin{itemize}
 \item Algorithms for computing rank-$k$ approximations.
\begin{itemize}
 \item $\spaApprox$: Algorithm \ref{Alg: SPA based algorithm}.

 \item $\randApprox$: Algorithm \ref{Alg: rand subspace iteration}.
       
 \item $\svdApprox$:
       MATLAB command \ {\tt [Uk, Sk, Vk] = svds(A, k, 'L'); Ak = Uk*Sk*Vk'}.
       The obtained matrix $\A_k$ is the best rank-$k$ approximation to an input matrix $\A$.
       
\end{itemize}
 \item Algorithms for solving separable NMF problems.
 \begin{itemize}
  \item $\spa$:	Algorithm \ref{Alg: SPA}. 

  \item $\pspa$: Algorithm \ref{Alg: PSPA}. 

  \item $\mpspa$: Algorithm \ref{Alg: MPSPA}.

  \item $\erspa$:
	ER-SPA \cite{Miz14}.

  \item $\merspa$: 
	ER-SPA \cite{Miz14} with the first step
	replaced with Step 1 of Algorithm \ref{Alg: MPSPA}.
	
  \item $\spaspa$: SPA based preconditioned SPA \cite{Gil15b}.
	
  \item $\vca$: MATLAB code {\tt vca.m}.
	It is available on the second author's website\footnotemark[1]
	of \cite{Nas05}.
	\footnotetext[1]{\url{http://www.lx.it.pt/~bioucas/code.htm}}
 \end{itemize}
\end{itemize}

As mentioned in Section \ref{Subsec: PSPA},
the first two steps of ER-SPA are the same as Steps 1 and 2 of Algorithm \ref{Alg: PSPA}.
We implemented $\merspa$ by replacing the first step of ER-SPA with 
Step 1 of Algorithm \ref{Alg: MPSPA}.
Below, we describe our implementations of
Algorithms \ref{Alg: SPA based algorithm} to \ref{Alg: rand subspace iteration} and ER-SPA.

 \begin{itemize}
  \item Taking into consideration
	the equality of (\ref{Eq: useful equality for implementation of SPA}),
	we implemented Algorithm \ref{Alg: SPA} such that it runs in $O(dmk)$.
	The implementation is almost the same as the MATLAB code {\tt FastSepNMF.m}.
	It is available on the first author's website\footnotemark[2] of \cite{Gil14}.
	\footnotetext[2]{\url{https://sites.google.com/site/nicolasgillis/code}}

  \item Algorithms \ref{Alg: PSPA} and \ref{Alg: MPSPA} and ER-SPA 
	need to solve the optimization problem $\MVEE(\SC)$.
	Here, we used Algorithm 3 of \cite{Miz14} for solving $\MVEE(\SC)$.
	It performs the interior-point algorithm within a cutting plane strategy.
	We used the interior-point algorithm
	in the MATLAB software package SDPT3 \cite{Toh99b}.

 \item Algorithms \ref{Alg: SPA based algorithm} and \ref{Alg: rand subspace iteration}
       need to compute the orthonormal bases of the range space of $\Y$, given as 
       $\Y = (\A\A^\top)^q\A(\IC)$ or $\Y = (\A\A^\top)^q\A\Omegab$.
       To prevent them from losing numerical accuracy as a result of rounding errors,
       we used Algorithm 4.4 of \cite{Hal11}.

 \end{itemize}

Experiments were conducted on a desktop computer
equipped with an Intel Xeon W3565 processor and 12 GB memory and running MATLAB R2015a.

\subsection{Synthetic Data} \label{Subsec: synthetic data}

Three types of experiment were conducted on data sets consisting of
noisy separable matrices. The matrices were synthetically generated.
The first set of experiments examined how accurate the low-rank approximations of $\spaApprox$
for noisy separable matrices are.
In particular, one may raise a concern in cases
where the amount of noise involved in separable matrices is large,
as our theoretical result shown in Theorem \ref{Theo: main} cannot answer it.
The second set of experiments examined
the questions of whether
$\mpspa$ sufficiently improves the noise-robustness over that of $\spa$
and whether it is as robust as $\pspa$.
These experiments used the same data sets as in the first ones.
We also tried to determine 
whether $\spaApprox$ is useful for preprocessing in $\erspa$.
In parallel with $\mpspa$, we developed $\merspa$.
The third set of experiments examined how long $\spaApprox$ takes.

The data sets consisted of \by{d}{m} noisy separable matrices $\A$
with a factorization rank $k$.
They  were formed as $\A = \F[\I, \H]\Pib + \N$ from
component matrices $\F, \H, \Pib$ and $\N$ generated as follows;
$\F$ was a \by{d}{k} nonnegative matrix and 
the elements were drawn from a uniform distribution on the interval $[0, 1]$;
$\H$ was a \by{k}{(m-k)} nonnegative matrix and 
the columns were drawn from a Dirichlet distribution having $k$ parameters in $[0,1]$;
$\Pib$ was a chosen randomly \by{m}{m} permutation matrix;
and $\N$ was a \by{d}{m} matrix and 
the elements were drawn from a Gaussian distribution with mean zero and variance one.
To control the noise intensity,
we introduced a parameter $\delta$ that took nonnegative real values,
and scaled $\N$ to satisfy $\|\N\|_2 = \delta$.

The first experiments used data sets
consisting of noisy separable matrices of size $(d, m, k) = (500, 100~000, 10)$
with $\delta$ ranging from $0$ to $200$ in increments of $10$.
A single data set consisted of $50$ distinct noisy separable matrices with
$\delta \in \{0, 10, \ldots, 200 \}$, and a total of $21$ data sets were used.
We ran $\spaApprox$ and $\randApprox$ on the data sets
by setting $q$ to $1, 2, 5, 10$ or $15$.
To measure the accuracy of the rank-$k$ approximation $\B$ to $\A$,
we computed the approximation error $\|\A - \B \|_2$.
Note that $\delta$ serves as an upper bound on the best approximation error $\|\A - \A_k \|_2$.
Since $\A$ is of the form $\A = \F[\I, \H]\Pib + \N$ for
$\F \in \Real^{d \times k}_+$, $[\I, \H]\Pib \in \Real^{k \times m}_+$ and $\N \in \Real^{d \times m}$,
and the inner matrix $\F[\I, \H]\Pib$ is a rank-$k$ approximation to $\A$, we have 
\begin{equation*}
 \| \A - \A_k \|_2 \le \|\A - \F[\I,\H]\Pib \|_2 = \|\N\|_2 \le \delta
\end{equation*}
for the best rank-$k$ approximation $\A_k$ to $\A$.
Figure~\ref{Fig: approximation error} displays the average approximation error 
of $\spaApprox$ and $\randApprox$ on $50$ noisy separable matrices
for each $\delta \in \{0, 10, \ldots, 200\}$.
The black dotted line in the figures connects the points $(\delta, \delta)$
and draws an upper bound on the best approximation errors.
We can see from the figures
that the approximation errors of both algorithms decrease as $q$ increases, 
and they are close to the best approximation errors when $q$ exceeds $10$.
Unlike $\randApprox$, when $\delta$ is less than around $100$,
the approximation errors of $\spaApprox$ remain close to the best ones 
even though $q$ is small, such as $1$ and $2$.
These experimental results imply that $\spaApprox$ with a $q$ larger than $10$
provides highly accurate rank-$k$ approximations for noisy separable matrices
even if the amount of noise is large.

\begin{figure}[h]
 \centering
 \begin{minipage}{.48\linewidth}
  \includegraphics[width=\linewidth]{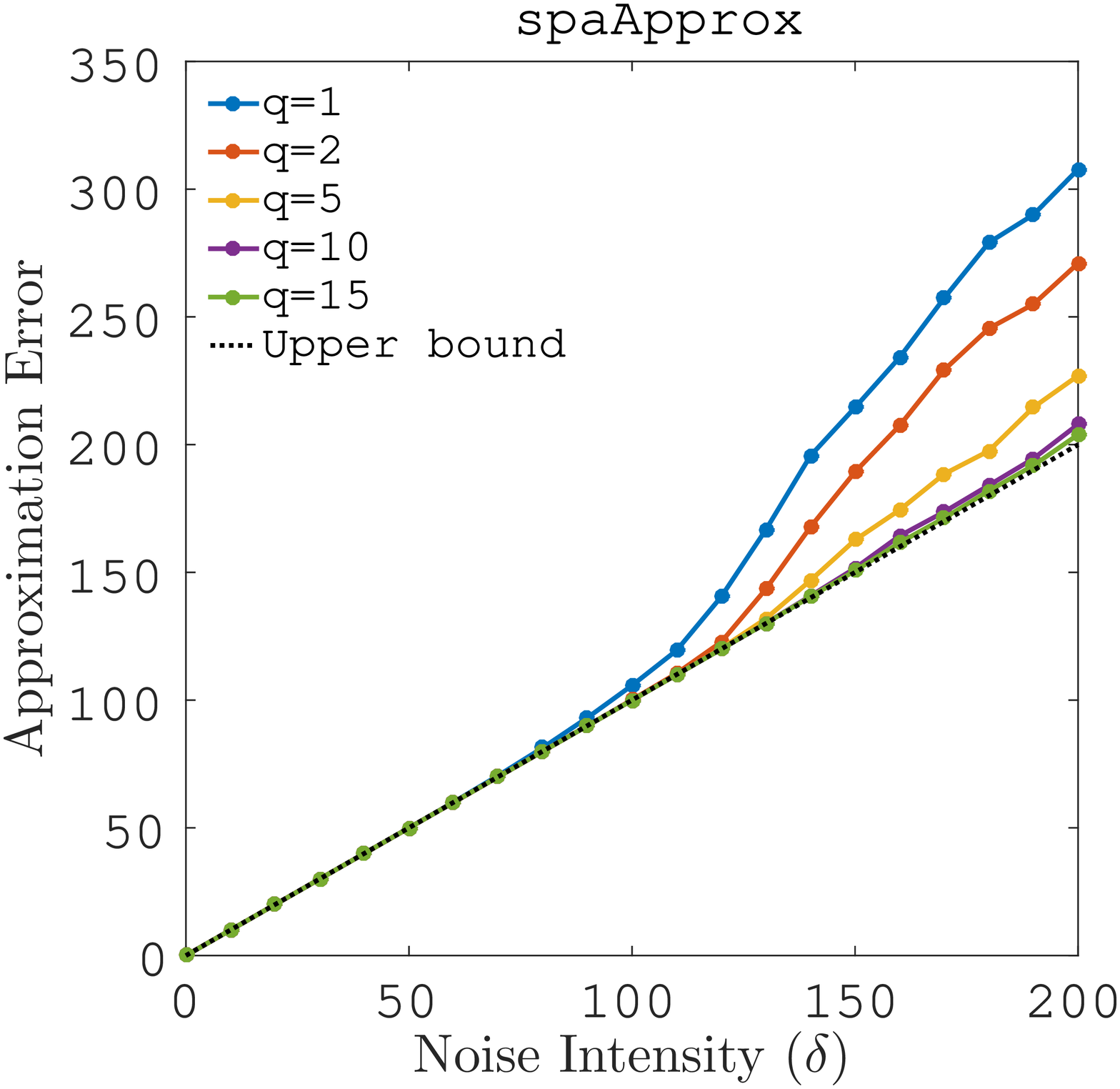}
 \end{minipage}
 \begin{minipage}{.48\linewidth}
  \includegraphics[width=\linewidth]{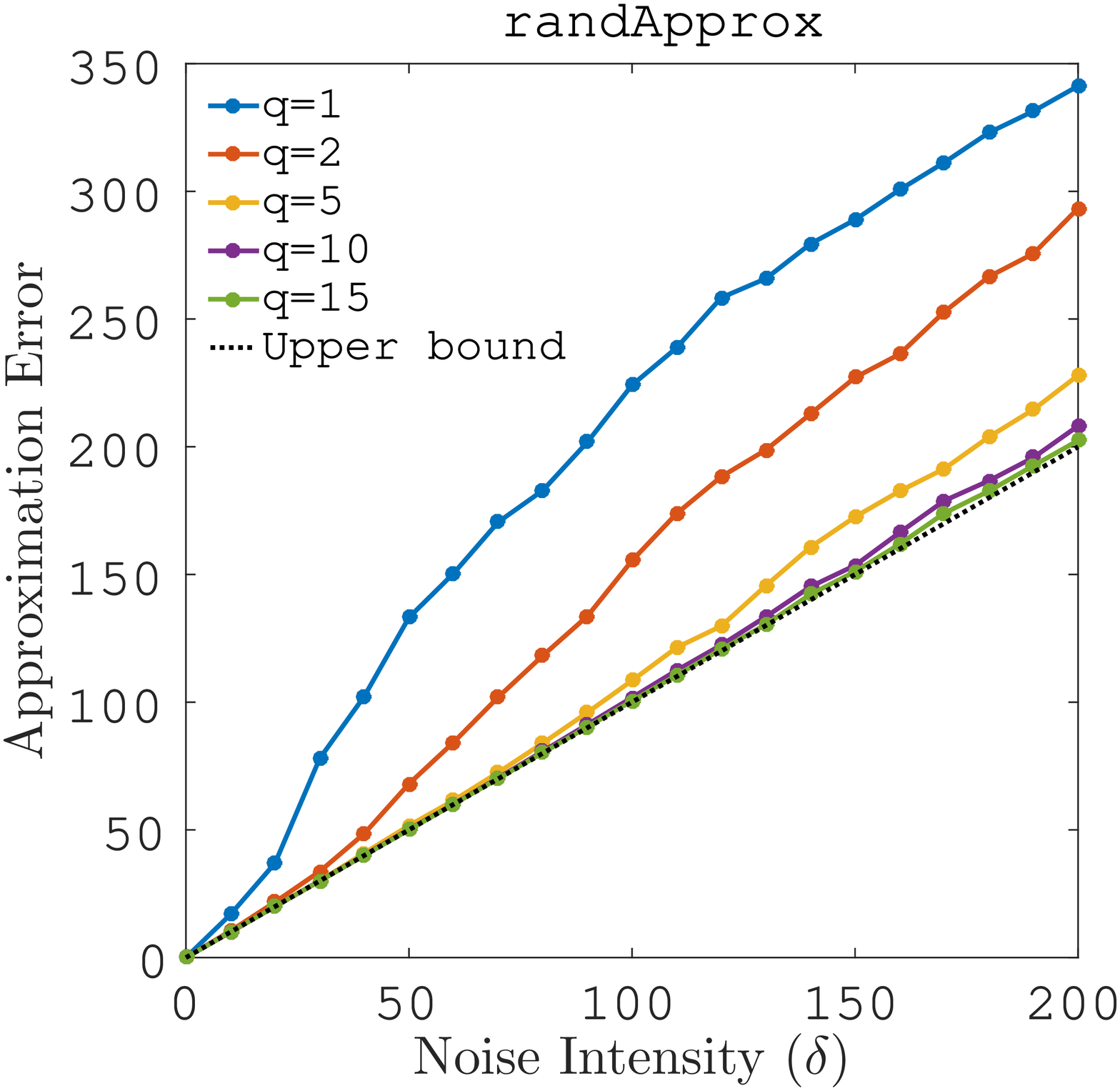}
 \end{minipage}
 \caption{(Results of the first experiments)
 Approximation error of $\spaApprox$ (left) and $\randApprox$ (right)
 for noisy separable matrices of size $(d,m,k) = (500, 100~000, 10)$
 with $\delta \in \{0,10, \ldots, 200\}$.} \label{Fig: approximation error}
\end{figure}

The second experiments ran $\mpspa$, $\spa$, $\pspa$, $\merspa$ and $\erspa$
on the same data sets as in the first experiments.
In the runs of $\mpspa$ and $\merspa$,
$q$ was set from $1,2,5,10$ to $15$.
To measure the robustness of each algorithm,
we computed the recovery rate $\frac{1}{k}|\IC \cap \IC^* |$.
Here, $\IC$ is the index set returned by the algorithm and 
$\IC^*$ is the set of column indices in $\A$
that correspond to the columns of $\F$.
Figure~\ref{Fig: recovery rate} displays the average recovery rates of the algorithms.
We can see that the recovery rates of $\mpspa$ increase with $q$.
When $q$ exceeds $10$, the recovery rates of $\mpspa$ approach those of $\pspa$.
These results imply that, even if the amount of noise is large,
$\mpspa$ with a $q$ larger than $10$ is significantly more robust than $\spa$.
The recovery rates of $\merspa$
show a similar tendency to those of $\mpspa$.
We can hence see that $\spaApprox$ is useful for preprocessing in $\erspa$.

\begin{figure}[h]
 \centering
 \begin{minipage}{.48\linewidth}
 \includegraphics[width=\linewidth]{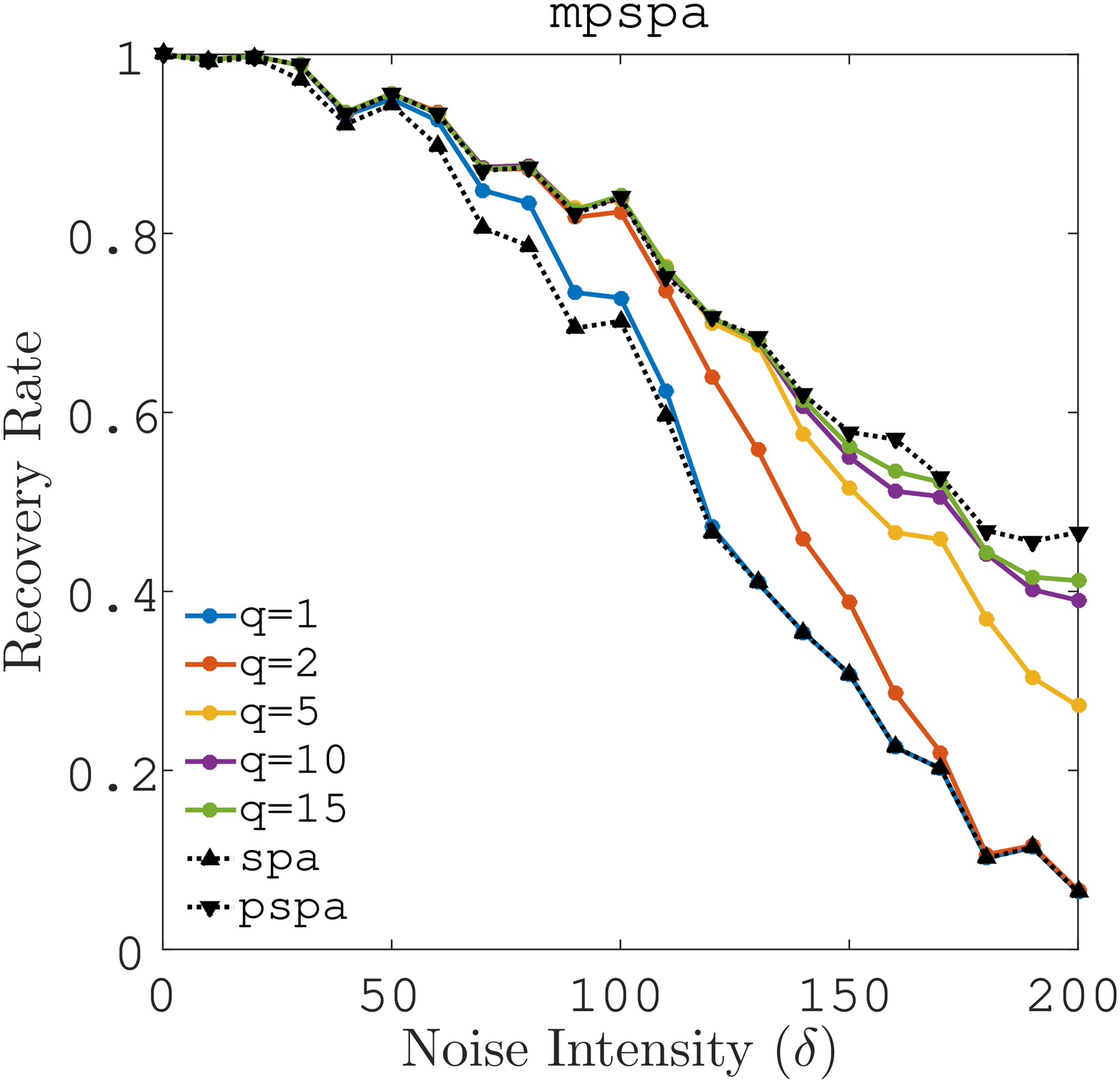}
 \end{minipage}
  \begin{minipage}{.48\linewidth}
  \includegraphics[width=\linewidth]{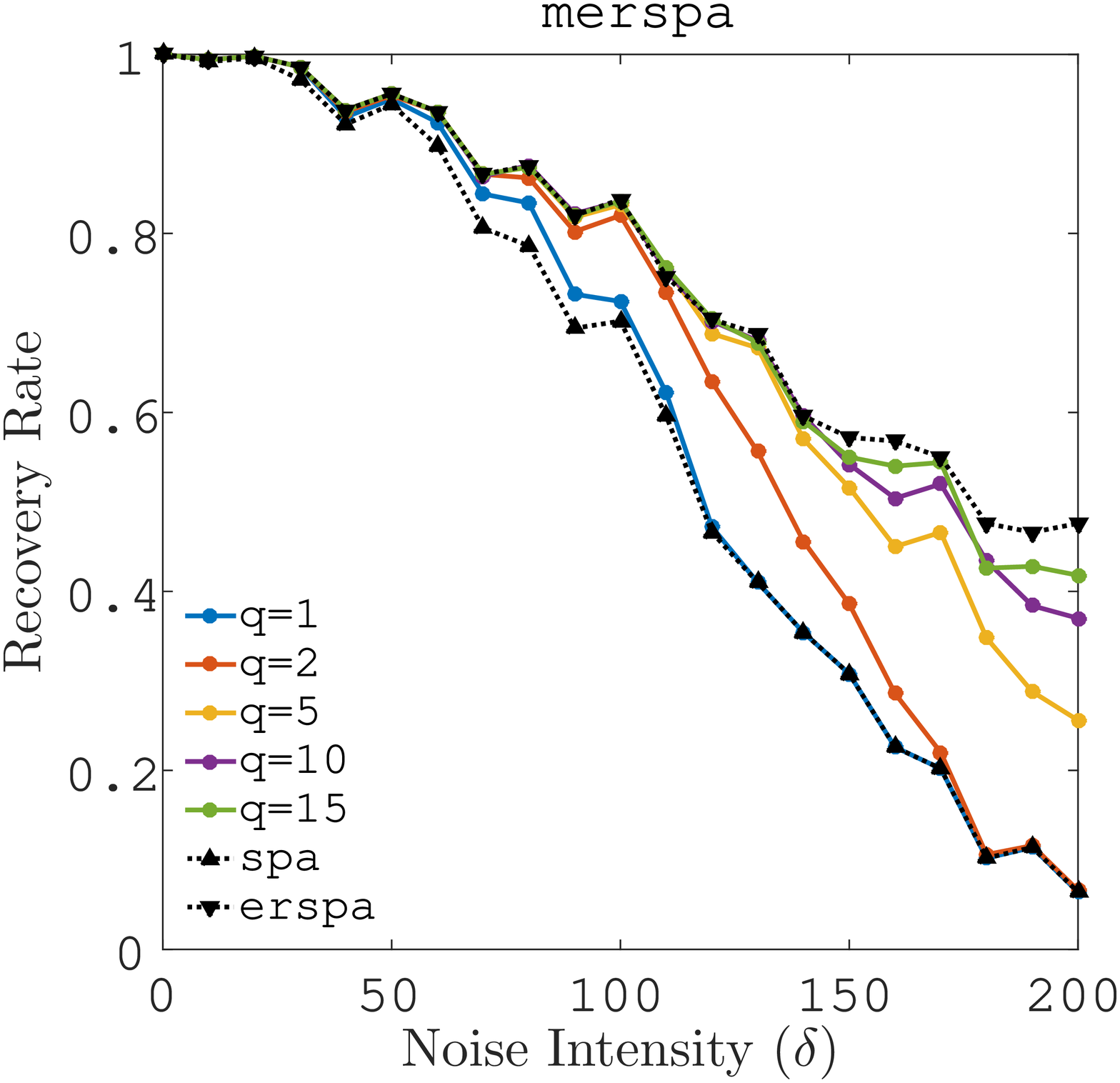}
 \end{minipage}
 \caption{(Results of the second experiments)
 Recovery rate of $\mpspa$ (left) and $\merspa$ (right). 
 The experiments used the same data sets as in the first experiments.}
 \label{Fig: recovery rate}
\end{figure}

The third experiments used two types of data sets:
data sets where $d$ and $k$ were fixed while $m$ was varied such that 
$(d, k) = (500, 10)$ and $m \in \{300~000, 400~000, 500~000\}$,
and data sets where $m$ and $k$ were fixed while $d$ was varied
such that $(m, k) = (100~000, 10)$ and $d \in \{1~000, 2~000, 3~000\}$.
The noise intensity $\delta$ was set as $200$, and the value was
the same for all the data sets.
A single data set consisted of $50$ distinct noisy separable matrices for each $m$ and each $d$,
and a total of $6$ data sets were used.
We ran $\spaApprox$ and $\randApprox$ on the data sets with $q$ set to $10$.
We also ran $\svdApprox$ on them.
We measured the elapsed time of the algorithms for each run
by using the MATLAB commands {\tt tic} and {\tt toc}.
We also evaluated the approximation errors of the algorithms.
Table \ref{Tab: comp time and approx error on synthetic data} summarizes
the average elapsed time in seconds and the average approximation error
on $50$ noisy separable matrices.
The elapsed time is listed in the columns labeled ``time'',
and the average approximation error in those labeled ``error''.
The results for $\spaApprox$ and $\randApprox$ were obtained by setting $q$ to $10$.
We can see that $\spaApprox$ is 18 to 86 times faster than $\svdApprox$ in elapsed time.
The approximation errors of $\spaApprox$ and $\svdApprox$ coincide in the first four digits
for three out of six data sets.
Although the elapsed time of $\spaApprox$ is longer than that of $\randApprox$,
the differences are within a reasonable range.
The approximation errors of $\spaApprox$ are smaller than those of $\randApprox$.

\begin{table}[h]
 \caption{(Results of the third experiments) 
 Elapsed time in seconds and approximation error
 of $\spaApprox$ with $q=10$, $\randApprox$ with $q=10$ and $\svdApprox$.
 The factorization rank $k$ and noise intensity $\delta$
 in the data sets were set to $k = 10$ and $\delta =200$.}
 \label{Tab: comp time and approx error on synthetic data}
 \centering
 \begin{tabular}{rr|rr|rr|rr}
  \toprule
  &       &  \multicolumn{2}{c|}{$\spaApprox$ with $q=10$}
	  &  \multicolumn{2}{c|}{$\randApprox$ with $q=10$}
          &  \multicolumn{2}{c}{$\svdApprox$}  \\
   $d$ & $m$  &  time (s) &  error  & time (s)  & error    & time (s) &  error \\
   \midrule
   500 & 300~000 & 6.0  & $200.12$ & 5.0 & $203.88$ & 109.4 & $199.91$  \\ 
   500 & 400~000 & 8.0  & $199.93$ & 6.8 & $203.70$ & 365.4 & $199.92$  \\
   500 & 500~000 & 10.1 & $199.93$ & 8.7 & $205.87$ & 598.6 & $199.93$ \\
   \midrule
   1~000 & 100~000 & 3.6  & $201.61$ & 3.0 & $204.29$ & 128.5 & $199.86$ \\ 
   2~000 & 100~000 & 7.1  & $200.59$ & 5.9 & $200.95$ & 329.8 & $199.93$  \\
   3~000 & 100~000 & 10.5 & $199.94$ & 8.8 & $202.83$ & 906.8 & $199.93$ \\
   \bottomrule
 \end{tabular}
 \end{table}

\subsection{Real Data -- Application to Hyperspectral Unmixing} \label{Subsec: real data}
Hyperspectral unmixing is a process to identify constituent materials
in a hyperspectral image of a scene.
We shall see that it can be formulated as a separable NMF problem with noise.
The following description follows the tutorials  \cite{Bio12, Ma14}.
A hyperspectral camera is an optical instrument to measure the spectra of materials in a scene.
For instance, the AVIRIS (airborne visible/infrared imaging spectrometer) sensor
developed by the Jet Propulsion Laboratory can scan materials
in 224 spectral bands with wavelengths ranging from 400 nm to 2500 nm.
Let $d$ be the number of spectral bands that a hyperspectral camera can measure.
We associate $\A = [\a_1, \ldots, \a_m] \in \Real^{d \times m}$ with
an image of a scene taken by the camera such that the total number of pixels is $m$.
Here, $\a_i$ stores measured reflectance values at the $i$th pixel,
and the $\ell$th element of $\a_i$ corresponds to the measured value at the $\ell$th band.
A linear mixing model assumes that $\a_1, \ldots, \a_m$ are generated by 
\begin{equation*}
 \a_i = \sum_{j=1}^{k} w_{ji}\f_j + \n_i, \ i=1, \ldots, m
\end{equation*}
where $\f_j \in \Real^d$ is the spectrum of the $j$th constituent material in the scene,
and the elements of $\f_j$ are usually nonnegative;
$w_{ji} \in \Real$ is the mixing rate of the $j$th material at the $i$th pixel
and satisfies $w_{ji} \ge 0$ and $\sum_{j=1}^{k} w_{ji} = 1$; and $\n_i \in \Real^d$ is noise.
We call $\f_j$ the {\it endmember} of the image and 
$w_{ji}$ the {\it abundance} of the endmember $\f_j$ at the $i$th pixel.
In the linear mixing model, hyperspectral unmixing is a problem of
extracting endmembers $\f_1, \ldots, \f_k$ from $\A$. 
We say that the $j^*$th material has a {\it pure pixel}
if there is a pixel containing only the spectrum $\f_{j^*}$ of the material.
That is, there is an $i \in \{1, \ldots, m\}$ such that $w_{ji} = 1$ for $j = j^*$ and $w_{ji} = 0$ for $j \neq j^*$.
It might be reasonable to assume that every endmember has a pure pixel.
This is called the {\it pure pixel assumption},
and it is the same as the separability assumption explained in Section \ref{Sec: intro}.
Accordingly, hyperspectral unmixing under the pure pixel assumption
is equivalent to solving separable NMF problems with noise.
For a matrix $\A$ associated with an image,
the columns $\a_{i_1}, \ldots \a_{i_k}$ of $\A$
extracted by a separable NMF algorithm are the estimations of the endmembers.
The abundances of $\a_{i_1}, \ldots, \a_{ij}$ at some pixel are obtained
by solving the problem of minimizing a convex quadratic function over a simplex.

We are interested in how well $\mpspa$ works for hyperspectral unmixing.
First of all, we report the results of experiments
that evaluated the accuracy of low-rank approximations by $\spaApprox$
for real hyperspectral images.
The experiments used 6 hyperspectral image data:
Cuprite\footnotemark[3],
DC Mall\footnotemark[4],
Indian Pine\footnotemark[4],
Pavia University\footnotemark[5],
Salinas\footnotemark[5]
and Urban\footnotemark[3].
Table \ref{Tab: hyperspectral image} summarizes the number of spectral bands, pixels
and identified constituent materials in each image data.
We removed water absorption and noisy bands from the original data.
The number of bands  in the table is that of the bands we actually used.
These image data have been well studied and are publicly available
at the websites indicated  in the footnotes.
In particular, we used the Cuprite and Urban data
that had been processed for the experiments reported in \cite{Zhu14}.
\footnotetext[3]{Cuprite and Urban data from the website
(\url{http://www.escience.cn/people/feiyunZHU/index.html})}
\footnotetext[4]{DC Mall and Indian Pine  data from the website
(\url{https://engineering.purdue.edu/~biehl/MultiSpec/hyperspectral.html})}
\footnotetext[5]{Pavia University and Salinas data from the website
(\url{http://www.ehu.eus/ccwintco/index.php?title=Hyperspectral_Remote_Sensing_Scenes})}

\begin{table}[h]
 \caption{Hyperspectral image data used in the experiments.}
 \label{Tab: hyperspectral image}
 \centering
 \begin{tabular}{l|r|rr|r}
  \toprule
  &
  \begin{minipage}{0.16\hsize}	  
   \# bands we actually used $(d)$
   \end{minipage}
  &  \multicolumn{2}{c|}{\# pixels $(m)$} &
  \begin{minipage}{0.135\hsize}
   \# identified constituent materials $(k)$
  \end{minipage}
  \\
  \midrule
  Cuprite 
  & 188  &   47~500   & ($250 \times 190)$    & 12 \\
  DC Mall 
  & 191  &   392~960  & ($1~280 \times 307$)  & 7  \\  
  Indian Pine
  & 202  & 1~644~292  & ($2~678 \times 614$)  & 59 \\  
  Pavia University 
  & 103  &   207~400  & ($610 \times 340$)    & 9  \\  
  Salinas
  & 204  &   111~104  & ($512 \times 217$)    & 16  \\
  Urban 
  & 162  &   94~249   & ($307 \times 307$)    & 4  \\
  \bottomrule
 \end{tabular}
\end{table}

The Cuprite data was taken over a cuprite mining area in Nevada, USA.
The data we used was a subimage of the original one.
It has \by{250}{190} pixels with 188 clean bands, 
and there are 12 minerals in the scene. 
The DC Mall data was taken over the Washington DC Mall, USA.
It has \by{1~280}{307} pixels with 191 clean bands.
The scene contains 7 materials.
The Indian Pine data was taken over the Purdue University Agronomy Farm
in West Lafayette, USA.
We used full North-South AVIRIS flightline data.
%stored in the file ``19920612\_AVIRIS\_IndianPine\_NS-line.tif''.
It has \by{2~678}{614} pixels with 202 clean bands.
Although the original data has 220 bands,
we used the data from which 18 noisy bands (104-108, 150-162) are removed.
The scanned area covers 59 types of agricultural and forest areas.
The Pavia University data was taken over the University of Pavia, Italy.
It has \by{610}{340} pixels with 103 bands,
and the scene contains 9 materials.
Salinas data was taken over Salinas Valley, California, USA.
It has \by{512}{217} with 204 clean bands and
the scene contains 16 types of agricultural areas.
The Urban data has \by{307}{307} pixels with 162 clean bands.
Although the original data has 210 bands,
we used the data from which 48 water absorption and
noisy bands (1-4, 76, 87, 101-111, 136-153, 198-210) are removed.
The scene contains 4 materials.
The Cuprite, Indian Pine and Salinas data were acquired with the AVIRIS sensor;
the DC Mall and Urban data with the HYDICE sensor; and
the Pavia University data with the ROSIS sensor.

We ran $\spaApprox$ and $\svdApprox$ on the $6$ hyperspectral image data.
The parameter $k$ of $\spaApprox$ and $\svdApprox$ was set
to the number of identified constituent materials in each image.
The parameter $q$ of $\spaApprox$ was set to $10$ or $20$.
Since some images demonstrated that
the accuracy of the rank-$k$ approximations by $\spaApprox$ with $q=10$ is not so high,
we increased $q$ from $10$ to $20$.
To measure the accuracy of a rank-$k$ approximation $\B$ to a matrix $\A$,
we computed the relative approximation error $\|\A - \B\|_2 / \|\A\|_2$.

Table \ref{Tab: comp time and approx error on real data} summarizes the experimental results.
The columns with the label ``time'' list the elapsed time in seconds of the algorithms
and those with the label ``rel error'' list the relative approximation error of the algorithms.
We first observe the results for data except Indian Pine.
When $q=10$, $\spaApprox$ is 5 to 9 times faster than $\svdApprox$ in elapsed time.
The relative approximation errors of $\spaApprox$ and $\svdApprox$
coincide in the first four digits for Salinas and Urban, while
there are no small gaps between them for Cuprite and DC Mall;
in particular, only the first digits coincide for DC Mall.
When $q=20$,
the first three digits coincide for Cuprite
and the first two digits coincide for DC Mall.
Even if $q$ increases from 10 to 20,
$\spaApprox$ still maintains an advantage in elapsed time over $\svdApprox$;
it is 5 times faster on Cuprite and DC Mall.
We next delve into the discussion of experiments on Indian Pine.
Although $\spaApprox$ with $q = 10$ and $20$ terminated normally
and returned the output,
the execution of $\svdApprox$ was forced to terminate by MATLAB
before returning the output.
The main reason why the execution was interrupted could be that
it requested a large amount of memory.
Indeed, we succeeded to run $\svdApprox$ on Indian Pine
by using a desktop computer with more memory:
it was equipped with Intel Core i7-5775R processor and 16 GB memory.
The experiments revealed that 
the relative approximation error of $\svdApprox$ for Indian Pine is $4.7510 \times 10^{-4}$.

\begin{table}[h]
 \caption{Elapsed time in seconds and relative approximation error of
 $\spaApprox$ with $q=10$ and $20$ and $\svdApprox$ for hyperspectral image data.
 The symbol ``-'' means that the execution of $\svdApprox$
 was interrupted by MATLAB with an error message and was not terminated normally.}
 \label{Tab: comp time and approx error on real data}
 \centering
 \begin{tabular}{l|rr|rr|rr}
  \toprule
  & \multicolumn{2}{c|}{$\spaApprox$ with $q=10$}
  & \multicolumn{2}{c|}{$\spaApprox$ with $q=20$}
  & \multicolumn{2}{c}{$\svdApprox$} \\
  &  time (s) &  rel error & time (s) &  rel error   &  time (s) & rel error \\
  \midrule
  Cuprite              & 0.7  & $2.1944 \times 10^{-3}$ & 1.2   & $2.1073 \times 10^{-3}$ & 6.9  & $2.1061 \times 10^{-3}$ \\
  DC Mall              & 4.0  & $8.3421 \times 10^{-3}$ & 7.2   & $8.1649 \times 10^{-3}$ & 36.5 & $8.1514 \times 10^{-3}$ \\
  Indian Pine          & 81.1 & $4.9211 \times 10^{-4}$ & 150.0 & $4.8687 \times 10^{-4}$ & -    & -                       \\ 
  Pavia University     & 1.7  & $8.3077 \times 10^{-3}$ & 3.1   & $8.3062 \times 10^{-3}$ & 10.8 & $8.3062 \times 10^{-3}$ \\
  Salinas              & 2.1  & $1.3354 \times 10^{-3}$ & 3.8   & $1.3351 \times 10^{-3}$ & 20.6 & $1.3351 \times 10^{-3}$ \\
  Urban                & 0.6  & $2.2364 \times 10^{-2}$ & 1.1   & $2.2364 \times 10^{-2}$ & 3.3  & $2.2364 \times 10^{-2}$ \\
  \bottomrule
 \end{tabular}
\end{table}

Next, we report the results of experiments examining
the accuracy of the endmembers estimated by $\mpspa$ for a hyperspectral image.
The experiments used the Urban data.
Figure \ref{Fig: urban} displays an RGB image of the data.
The constituent materials in the image scene were examined
in the previous studies \cite{Zhu14, Lu14, Wan15},
and 4 materials were identified: asphalt, grass, tree and roof.
The spectra of those materials are available
from the first author's website of \cite{Zhu14} (footnote 3).
We supposed that each of them was a true endmember in the Urban data.
To measure the accuracy of the estimated endmembers,
we evaluated a spectral angle distance (SAD).
Given a true endmember $\f \in \Real^d$ and an estimated endmember $\wh{\f} \in \Real^d$,
it is computed as $\arccos(\f^\top \wh{\f} / \|\f\|_2 \|\wh{\f}\|_2)$.
SAD takes values between $0$ and $1$.
A small SAD value means that an estimated endmember is close to a true endmember,
while a large SAD value means the opposite.
We set $k$ as $4$ and ran $\mpspa$ on a matrix associated with the Urban data.
For comparison, we also ran $\pspa$, $\spa$, $\spaspa$, and $\vca$.
\begin{figure}[h]
 \centering
 \includegraphics[width=0.4\linewidth]{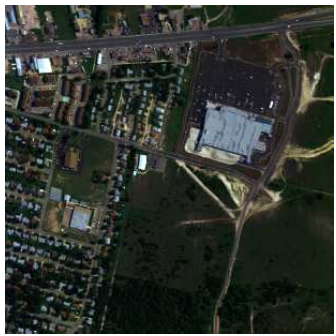}
 \caption{RGB image of Urban data
 generated by the satellite image processing software ENVI.} \label{Fig: urban}
\end{figure}

We examined the output of $\mpspa$ for increasing $q$.
When $q=4$, it coincided with the output of $\pspa$.
At that point,
the relative error of $\spaApprox$ was $2.2587 \times 10^{-2}$,
and there was still a gap between the accuracies of the rank-$k$ approximations by
$\spaApprox$ and $\svdApprox$.
Nevertheless,  $\mpspa$ returned the same output as $\pspa$.
The elapsed time of $\mpspa$ with $q=4$ was 2.3 seconds, while that of $\pspa$ was 6.4 seconds.

Table \ref{Tab: SAD} summarizes the SADs of the algorithms.
The rows correspond to the estimated endmembers,
and values in each row are the SADs for the spectra of the corresponding materials.
We underlined the minimum value on each row.
The estimated endmember is the closest to the spectrum of
a material corresponding to the underlined value.
We can see that the endmembers estimated by $\mpspa$ with $q=4$ and $\pspa$
are close to the spectra of 4 materials, respectively.
However, the estimates of the other algorithms are far from the spectrum of grass.
We computed the abundance maps of true and estimated endmembers.
We let the abundance maps of the true endmembers be the ground truth of the Urban data.
Figure \ref{Fig: abundance maps} displays the ground truth and the abundance maps
obtained by the algorithms.
A pixel color is white when the abundance of the corresponding material is large,
and the color gradually turns to black as the abundance gets smaller.
This enables us to visually confirm that
the abundance maps for $\mpspa$ with $q=4$ and $\pspa$
well match the ground truth.

\begin{table}[h]
 \centering
 \caption{SAD of $\mpspa$ with $q=4$ and $\pspa$ (upper-left),
 $\spa$ (upper-right), $\spaspa$ (lower-left) and $\vca$ (lower-right).
 The rows correspond to the estimated endmembers.
 The underlined value indicates the minimum value on each row.}  
 \label{Tab: SAD}
 \begin{minipage}{.45\linewidth}
  \begin{tabular}{r|rrrr}
   \toprule
   \multicolumn{5}{c}{$\mpspa$ with $q=4$ and $\pspa$} \\
   \midrule
   & asphalt & grass & tree & roof \\
   \midrule
   1  & \underline{0.191} & 0.594 & 0.988 & 0.553 \\
   2 & 0.489 & \underline{0.045} & 0.435 & 0.606 \\
   3 & 0.852 & 0.465 & \underline{0.074} & 0.816 \\
   4 & 0.564 & 0.653 & 0.783 & \underline{0.217} \\
   \bottomrule
  \end{tabular}
 \end{minipage}
 \begin{minipage}{.45\linewidth}
  \begin{tabular}{r|rrrr}
   \toprule
   \multicolumn{5}{c}{$\spa$} \\
   \midrule
   & asphalt & grass & tree & roof \\
   \midrule
   1 & \underline{0.132} & 0.469 & 0.858 & 0.497 \\
   2 & 0.564 & 0.653 & 0.783 & \underline{0.217} \\
   3 & 0.852 & 0.465 & \underline{0.074} & 0.816 \\
   4 & 1.156 & 1.367 & 1.443 & \underline{0.874} \\
   \bottomrule
  \end{tabular}
 \end{minipage}

 \bigskip
 \begin{minipage}{.45\linewidth}
  \begin{tabular}{r|rrrr}
   \toprule
   \multicolumn{5}{c}{$\spaspa$} \\
   \midrule
   & asphalt & grass & tree & roof \\
   \midrule
   1 & \underline{0.191} & 0.594 & 0.988 & 0.553 \\
   2 & 0.564 & 0.653 & 0.783 & \underline{0.217} \\
   3 & 0.852 & 0.465 & \underline{0.074} & 0.816 \\
   4 & 1.156 & 1.367 & 1.443 & \underline{0.874} \\
   \bottomrule
  \end{tabular}
 \end{minipage}
 \begin{minipage}{.45\linewidth}
  \begin{tabular}{r|rrrr}
   \toprule
   \multicolumn{5}{c}{$\vca$} \\
   \midrule
   & asphalt & grass & tree & roof \\
   \midrule
   1 & \underline{0.228} & 0.670 & 1.049 & 0.535 \\
   2 & 0.884 & 0.574 & \underline{0.530} & 0.817  \\
   3 & 0.970 & 0.678 & \underline{0.300} & 0.818  \\
   4 & 1.153 & 1.369 & 1.449 & \underline{0.867}  \\
   \bottomrule
  \end{tabular}
 \end{minipage}
\end{table}

\begin{figure}[p]
 \centering
 \includegraphics[width=1.0\linewidth, bb= 0 220 693 1005]{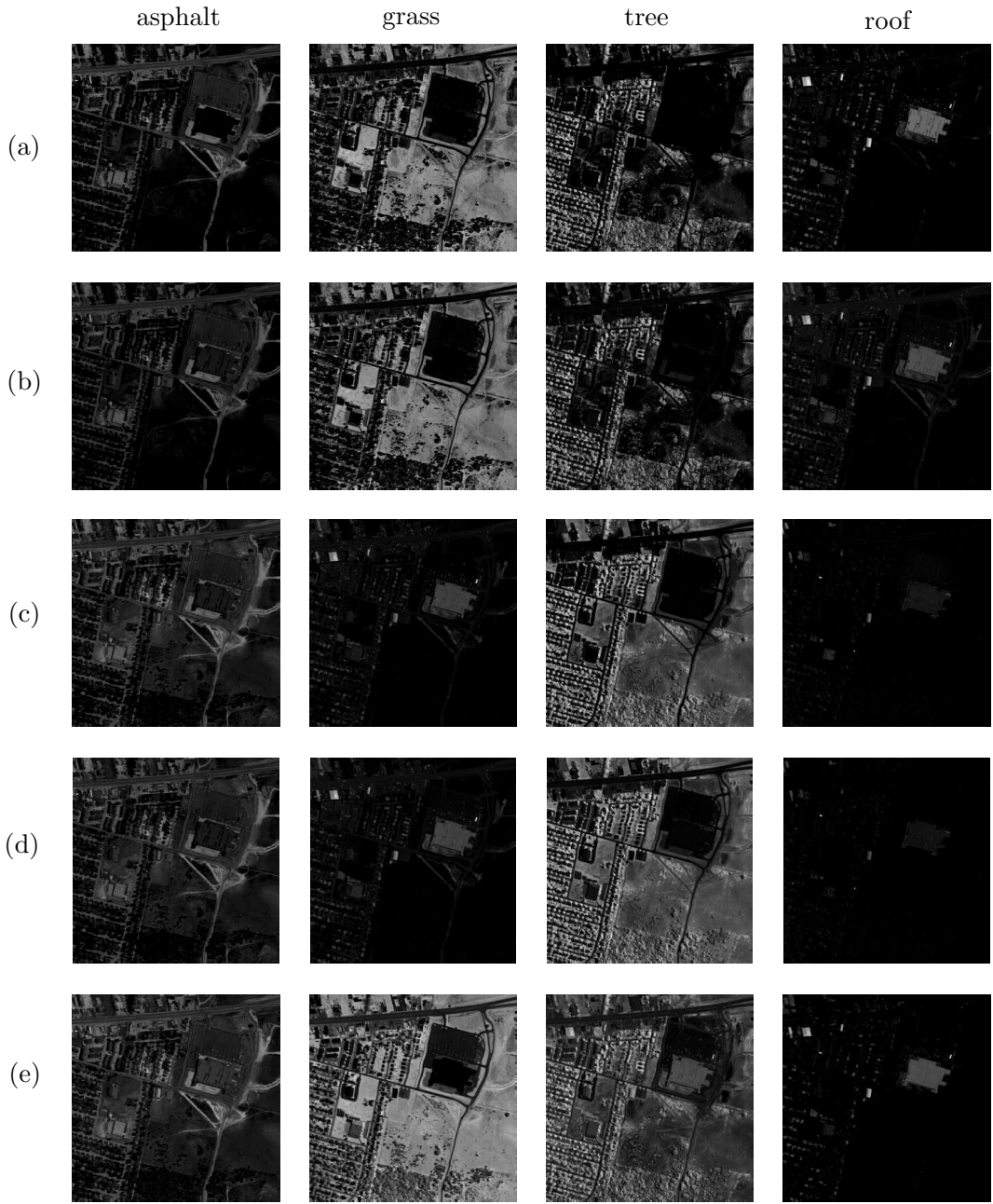}
 \caption{Ground truth and abundance maps obtained by the algorithms: (a) ground truth,
 (b) $\mpspa$ with $q=4$ and $\pspa$, (c) $\spa$, (d) $\spaspa$, and (e) $\vca$.
 From left to right, panels in (a) display the abundance maps of
 the true endmembers, asphalt, grass, tree and roof, respectively.
 From left to right,
 panels in (b) to (e) display the abundance maps of the estimated endmembers
 shown in Table \ref{Tab: SAD} from top to bottom.}
 \label{Fig: abundance maps}
\end{figure}

\section{Summary and Future Research} \label{Sec: concluding remarks}

We have proposed a modification to PSPA, and described it in Algorithm \ref{Alg: MPSPA}.
The modification was motivated by addressing the cost issue of PSPA.
Although PSPA uses the best rank-$k$ approximation to an input matrix,
the modification avoids having to use it and alternatively uses a rank-$k$ approximation
produced by Algorithm \ref{Alg: SPA based algorithm}.
We  evaluated the computational cost of Algorithm \ref{Alg: SPA based algorithm} and
clarified that it is low.
The robustness to noise of Algorithm \ref{Alg: MPSPA}
depends on the approximation accuracy of Algorithm \ref{Alg: SPA based algorithm}.
We derived a bound on the approximation error $\|\A - \B \|_2$ for the input matrix $\A$
and the output matrix $\B$ of Algorithm \ref{Alg: SPA based algorithm}
and described the result in Theorem \ref{Theo: main}.
We conducted an empirical study to assess
the actual performance of Algorithm \ref{Alg: SPA based algorithm} and Algorithm \ref{Alg: MPSPA}.

Finally, we suggest the directions of study for future research.
\begin{itemize}
 \item
      In Theorem \ref{Theo: main}, 
      we put conditions in which an input matrix is noisy separable
      and the amount of noise is small and
      then derived a bound on the approximation error of Algorithm \ref{Alg: SPA based algorithm}.
      Further study is needed to see
      whether it is possible to obtain the error bound under weaker conditions.
      In relation to this,
      it would be interesting to explore
      how well Algorithm \ref{Alg: SPA based algorithm} works for a general matrix
      from theoretical and practical perspectives.

 \item
      Theorem \ref{Theo: main} implies that Algorithm \ref{Alg: SPA based algorithm}
      can produce highly accurate low-rank approximations if
      the value of an input parameter $q$ is set as a large integer.
      However, the theorem may not help us to estimate a parameter value
      required for obtaining such low-rank approximations.
      This is because the theorem 
      describes a bound on the approximation error of Algorithm \ref{Alg: SPA based algorithm}
      by using the ratio between the $(k+1)$th
      and $k$th largest singular values of an input matrix $\A$.
      Regarding Algorithm \ref{Alg: rand subspace iteration},
      the author of \cite{Woo14} has derived the following error bound.
      It is different from an error bound shown in \cite{Gu15} that
      we saw in Section \ref{Subsec: relation with the randomized algorithm}.
      Theorem 4.16 in \cite{Woo14} argues that,
      given a matrix $\A \in \Real^{d \times m}$ and integers $q$ and $k$,
      Algorithm \ref{Alg: rand subspace iteration} returns a rank-$k$ approximation 
      $\B$ to $\A$ satisfying $\|\A - \B\|_2 \le \sigma_{k+1} (c (m-k)/ k )^{1/4q+2}$
      with probability at least $4/5$ where $c$ is a positive real number.
      This theoretical result can help us to estimate the value of $q$
      before running Algorithm \ref{Alg: rand subspace iteration}.
      If we desire to obtain a rank-$k$ approximation $\B$ to $\A$
      satisfying $\|\A - \B\|_2 \le (1+\epsilon) \sigma_{k+1}$,
      the result tells us that $q$ should be set as an integer 
      determined by $\epsilon$, $c$, $m$ and $k$.
      It would be interesting to investigate whether we can obtain this type of an error bound 
      even in case of Algorithm \ref{Alg: SPA based algorithm}.
      Also, further experimental study would be needed
      to observe the relation between the accuracy of low-rank approximation by
      Algorithm \ref{Alg: SPA based algorithm} and a paramenter $q$.

 \item 
       As mentioned in Remark \ref{Remark: randomized subspace iteration},
       the original algorithm description of 
       the randomized subspace iteration \cite{Rok09, Hal11, Gu15}
       includes an input parameter $\ell$.
       Similarly, Algorithm \ref{Alg: SPA based algorithm} can be extended
       to include a parameter $\ell$.
       The extension would probably enable Algorithm \ref{Alg: SPA based algorithm}
       to improve the approximation error by increasing $\ell$ as well as $q$.
       On the other hand, there is a concern that
       the extension involves a computation of a truncated SVD;
       the computational cost becomes large as  $\ell$ increases.
       It would be interesting to see whether 
       the extended algorithm has any advantage in the modification of PSPA.
\end{itemize}

\section*{Acknowledgments}

The authors would like to thank the reviewers for their comments and suggestions
that helped to improve the quality of this paper.
This research was supported by the Japan Society for the Promotion of Science
(JSPS KAKENHI Grant Numbers 15K20986, 26242027).

\appendix
\section*{Appendix \quad Proof of Lemma \ref{Lemm: low-rank approx by CSS}(a)} 

Here, we prove Lemma \ref{Lemm: low-rank approx by CSS}(a).
The proof follows straightforwardly from 
the arguments in the proof of Theorem 9.1 in \cite{Hal11}.
The notation $\A \preceq \B$ below means that $\B - \A$ is positive semidefinite.

\begin{lemm} \label{Lemm: norm inequality on a psd relation}
 If $\A \preceq \B$, then $\|\A\|_2 \le \|\B\|_2$.
\end{lemm}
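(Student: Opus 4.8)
The plan is to reduce this operator-norm inequality to the variational (Rayleigh-quotient) characterization of the spectral norm. Since the lemma is invoked in the sequel only for symmetric positive semidefinite matrices, and this hypothesis is in fact what makes the statement correct, I would prove it under the assumption $\0 \preceq \A \preceq \B$ with $\A$ and $\B$ symmetric. The key identity I would lean on is that, for a symmetric positive semidefinite matrix $\M$, the spectral norm equals its largest eigenvalue and hence admits the representation
\begin{equation*}
 \|\M\|_2 = \lambda_{\mmax}(\M) = \max_{\|\x\|_2 = 1} \x^\top \M \x.
\end{equation*}

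First I would apply this identity to both $\A$ and $\B$ (the representation for $\B$ is legitimate because $\B \succeq \A \succeq \0$). Next, the hypothesis $\A \preceq \B$ is, by the definition recalled just above, precisely the statement that $\B - \A$ is positive semidefinite, so that $\x^\top \A \x \le \x^\top \B \x$ for every vector $\x$. Maximizing both sides over the unit sphere preserves the inequality, giving
\begin{equation*}
 \|\A\|_2 = \max_{\|\x\|_2 = 1} \x^\top \A \x \le \max_{\|\x\|_2 = 1} \x^\top \B \x = \|\B\|_2,
\end{equation*}
which is the desired conclusion.

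The one point I would take care to flag is that positive semidefiniteness, not merely symmetry, is essential: for a general symmetric matrix the spectral norm is $\max_i |\lambda_i|$ rather than the largest eigenvalue, and the conclusion then breaks down, as the choice $\A = -2\I$, $\B = -\I$ shows ($\A \preceq \B$ yet $\|\A\|_2 = 2 > 1 = \|\B\|_2$). Once positivity is in force the argument is otherwise immediate, so this delicate hypothesis---rather than any computation---is really the whole content of the lemma.
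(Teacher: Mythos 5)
Your proof is correct and, at its core, it is the same argument the paper gives: the paper's one-sentence justification invokes precisely the variational identity $\|\A\|_2 = \max_{\|\x\|_2 = 1} \x^\top \A \x$ together with the definition of the ordering $\preceq$. What you add is a genuine correction rather than a different route. That identity holds only when the largest eigenvalue equals the spectral norm (e.g.\ for positive semidefinite matrices), not for arbitrary symmetric ones, for which $\|\A\|_2 = \max_{\|\x\|_2=1} |\x^\top \A \x|$; consequently the lemma as literally stated is false, and your counterexample $\A = -2\I$, $\B = -\I$ establishes this. Your restricted hypothesis $\0 \preceq \A \preceq \B$ is exactly what is available at the lemma's only point of use, namely the appendix proof of Lemma \ref{Lemm: low-rank approx by CSS}(a), where the paper explicitly verifies that both sides of (\ref{Eq: matrix inequality}) are positive semidefinite before citing this lemma. (In fact $\A \succeq \0$ alone suffices: then $\|\A\|_2 = \max_{\|\x\|_2=1} \x^\top \A \x \le \max_{\|\x\|_2=1} \x^\top \B \x = \lambda_{\mmax}(\B) \le \|\B\|_2$ for symmetric $\B$.) So your version is the statement the paper actually needs, proved by the paper's own method, with the tacit positivity hypothesis made explicit.
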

This inequality follows from the definition of a positive semidefinite matrix
and the property that 
a symmetric matrix $\A$ has the relation $\|\A\|_2 = \max_{\|\x\|_2 = 1}  \x^\top \A \x$.

\begin{lemm}[Proposition 8.3 in \cite{Hal11} and also Lemma 1.1 in \cite{Bou12}]
\label{Lemm: norm inequality on blocks}
 Let $\A$ be a symmetric matrix written in the blocks
 \begin{equation*}
  \A =
   \left[
   \begin{array}{cc}
    \X      & \Z \\
    \Z^\top & \Y
   \end{array}
   \right],
 \end{equation*}
 and suppose that $\A$ is positive semidefinite. Then,  $\|\A\|_2 \le \|\X\|_2 + \|\Y\|_2$.
\end{lemm}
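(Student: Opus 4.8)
The plan is to reduce the block bound to a statement about the operator norm of a single block-row matrix, exploiting the fact that a symmetric positive semidefinite matrix is a Gram matrix. Since $\A \succeq \0$, it has a symmetric positive semidefinite square root, so I would write $\A = \W^\top \W$ with $\W = \A^{1/2}$ (using $\W^\top = \W$), and then partition $\W$ into block columns $\W = [\U, \V]$, where $\U$ collects the first $k$ columns (matching the size of $\X$) and $\V$ the remaining ones. The block product then reads
\begin{equation*}
 \W^\top \W =
  \left[
  \begin{array}{cc}
   \U^\top \U & \U^\top \V \\
   \V^\top \U & \V^\top \V
  \end{array}
  \right],
\end{equation*}
so that $\X = \U^\top \U$ and $\Y = \V^\top \V$ (and incidentally $\Z = \U^\top \V$). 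In particular $\|\X\|_2 = \|\U\|_2^2$ and $\|\Y\|_2 = \|\V\|_2^2$.

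The next step uses the identity $\|\W^\top \W\|_2 = \|\W\|_2^2$, valid for any matrix, to rewrite the target as $\|\A\|_2 = \|[\U, \V]\|_2^2$. It then remains to show $\|[\U, \V]\|_2^2 \le \|\U\|_2^2 + \|\V\|_2^2$. For a unit vector $\x = (\x_1, \x_2)$ I would estimate $\|[\U, \V]\x\|_2 = \|\U \x_1 + \V \x_2\|_2 \le \|\U\|_2 \|\x_1\|_2 + \|\V\|_2 \|\x_2\|_2$ by the triangle inequality, and then apply the Cauchy--Schwarz inequality to the pairs $(\|\U\|_2, \|\V\|_2)$ and $(\|\x_1\|_2, \|\x_2\|_2)$ to bound this by $\sqrt{\|\U\|_2^2 + \|\V\|_2^2}$, since $\|\x_1\|_2^2 + \|\x_2\|_2^2 = 1$. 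Squaring and recalling $\|\X\|_2 = \|\U\|_2^2$ and $\|\Y\|_2 = \|\V\|_2^2$ then yields the claim.

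I expect the only delicate point to be the Cauchy--Schwarz step. A naive triangle-inequality decomposition of $\A$ into its three block pieces produces an unwanted $\|\Z\|_2$ term, while bounding $\A$ by its block-diagonal part gives only $2\max\{\|\X\|_2,\|\Y\|_2\}$, which is too weak; the Gram factorization is precisely what converts the additive structure into the tight sum $\|\X\|_2 + \|\Y\|_2$, with the cross term absorbed for free in the Cauchy--Schwarz combination. Should invoking the square root feel awkward, an equivalent route is to work directly with the quadratic form $\|\A\|_2 = \max_{\|\v\|_2 = 1} \v^\top \A \v$ and substitute $\A = \W^\top \W$ to get $\v^\top \A \v = \|\W \v\|_2^2$, after which the same triangle/Cauchy--Schwarz estimate applies; this phrasing also makes clear that no invertibility of $\X$ or $\Y$ is needed.
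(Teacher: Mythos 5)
Your proof is correct. One thing to be aware of: the paper itself never proves this lemma --- it is stated as a quoted result, with the proof deferred to Proposition 8.3 of \cite{Hal11} and Lemma 1.1 of \cite{Bou12}, and is then invoked as a black box in the appendix proof of Lemma \ref{Lemm: low-rank approx by CSS}(a). So the comparison here is with the cited literature rather than with anything in the paper. Your argument --- write $\A = \W^\top \W$ with $\W = \A^{1/2}$, partition $\W = [\U, \V]$ so that $\X = \U^\top \U$ and $\Y = \V^\top \V$, use $\|\A\|_2 = \|\W\|_2^2$, and bound $\|[\U,\V]\|_2^2 \le \|\U\|_2^2 + \|\V\|_2^2$ by the triangle inequality followed by Cauchy--Schwarz on the pair $(\|\x_1\|_2, \|\x_2\|_2)$ --- is complete and rigorous; every step (the Gram identification, $\|\U^\top\U\|_2 = \|\U\|_2^2$, and the final estimate) checks out, and no invertibility or rank assumption is needed, as you note. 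It is also essentially the same mechanism as the standard proofs in the cited references, which work with the quadratic form $\v^\top \A \v$ and absorb the cross block $\Z$ via a Cauchy--Schwarz estimate; your own closing remark about substituting $\A = \W^\top\W$ into $\max_{\|\v\|_2=1} \v^\top \A \v$ makes the equivalence of the two phrasings explicit. What your write-up buys relative to the paper is self-containedness: it supplies a short proof of the one auxiliary fact in the appendix that the paper leaves entirely to external sources, and your side comments correctly diagnose why cruder routes fail (naive blockwise triangle inequality leaves a spurious $\|\Z\|_2$ term, and the domination $\A \preceq 2\,\mathrm{diag}(\X,\Y)$ only yields $2\max\{\|\X\|_2,\|\Y\|_2\}$, which is weaker than $\|\X\|_2 + \|\Y\|_2$).
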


\begin{proof}[{\bf (Proof of Lemma \ref{Lemm: low-rank approx by CSS}(a))}]
 We have $\| (\I - \P_{\Z}) \Sigmab \|_2^2 = \| \Sigmab^\top (\I - \P_{\Z}) \Sigmab \|_2$
 since $\I-\P_{\Z}$ is an orthogonal projection and thus satisfies 
 $\I - \P_{\Z} = (\I - \P_{\Z})^\top $ and $\I - \P_{\Z} = (\I - \P_{\Z})^2$.
 Below, we derive an upper bound on $\| \Sigmab^\top (\I - \P_{\Z}) \Sigmab \|_2$.
 As seen in the proof of Lemma \ref{Lemm: low-rank approx by CSS}(b),
 from the nonsingularity of $\Z_1$,
 we can write $\Z$ as 
 \begin{equation*}
  \Z
   =
  \left[
  \begin{array}{c}
   \Z_1 \\
   \Z_2
  \end{array}
  \right] \\
  =
  \left[
  \begin{array}{c}
   \I \\
   \H 
  \end{array}
  \right] \Z_1
  \ \mbox{and} \
  \H = \Z_2 \Z_1^{-1}.
 \end{equation*}
 Then,
 \begin{equation*}
  \P_{\Z}
  = \Z (\Z^\top \Z )^{-1} \Z^\top 
  =
   \left[
   \begin{array}{cc}
    (\I + \H^\top \H)^{-1}   & (\I + \H^\top \H)^{-1}\H^\top  \\
    \H(\I + \H^\top \H)^{-1} & \H(\I + \H^\top \H)^{-1}\H^\top 
   \end{array}
   \right]. 
 \end{equation*}
 The following matrix inequalities hold.
 \begin{equation*}
  \I - (\I + \H^\top \H)^{-1}             \preceq  \H^\top \H \ \mbox{and} \
  \I - \H (\I + \H^\top \H)^{-1} \H^\top  \preceq  \I. 
 \end{equation*}
 The first one can be checked by considering the SVD of $\H$; see also Proposition 8.2 of \cite{Hal11}.
 The second one comes from the fact that $\H (\I + \H^\top \H)^{-1} \H^\top$ is positive semidefinite.
 From those inequalities, we get 
 \begin{equation*}
  \I - \P_{\Z} \preceq
   \left[
    \begin{array}{cc}
     \H^\top \H                 & - (\I + \H^\top \H)^{-1}\H^\top  \\
     - \H(\I + \H^\top \H)^{-1} & \I
    \end{array}
   \right]
 \end{equation*}
 and this implies 
 \begin{equation} \label{Eq: matrix inequality}
  \Sigmab^\top (\I - \P_{\Z}) \Sigmab \preceq
   \Sigmab^\top \left[
    \begin{array}{cc}
     \H^\top \H                 & - (\I + \H^\top \H)^{-1}\H^\top  \\
     - \H(\I + \H^\top \H)^{-1} & \I
    \end{array}
   \right] \Sigmab.
 \end{equation}
 $\I - \P_{\Z}$ is positive semidefinite,
 since it is an orthogonal projection.
 This means that
 the matrix on the left side of 
 (\ref{Eq: matrix inequality}) is positive semidefinite,
 and hence so is the matrix on the right side.
 The right-side matrix takes the following form.
 If $d \ge m$,
 \begin{equation*}
   \left[
  \begin{array}{c|ccc}
   \S_1 \H^\top \H \S_1 &                 &  \ast  &  \\
   \hline
                        & \sigma_{k+1}^2  &        & \\
       \ast             &                 & \ddots &  \\
                        &                 &        & \sigma_{m}^2 \\
  \end{array}  
   \right] \in \Real^{m \times m}.
 \end{equation*}
 Otherwise,
 \begin{equation*}
  \left[
  \begin{array}{c|ccccccc}
   \S_1 \H^\top \H \S_1   &        &              & \ast  &    &  \\
   \hline
        & \sigma_{k+1}^2  &        &              &   &        &   \\
        &                 & \ddots &              &   &        &   \\
   \ast &                 &        & \sigma_{d}^2 &   &        &   \\
        &                 &        &              & 0 &        &   \\
        &                 &        &              &   & \ddots &   \\
        &                 &        &              &   &        & 0 \\
  \end{array}  
  \right] \in \Real^{m \times m}.
 \end{equation*}
Here, $\S_1$ is the \by{k}{k} upper block of a diagonal matrix $\S$
whose elements are given as (\ref{Eq: s_i});
that is, the diagonal elements $s_1, \ldots, s_k$ of $\S_1$ correspond to 
those $\sigma_1, \ldots, \sigma_k$ of $\Sigmab$.
Accordingly,
from Lemmas \ref{Lemm: norm inequality on a psd relation}
and \ref{Lemm: norm inequality on blocks},
we obtain
$\|(\I-\P_{\Z})\Sigmab \|_2^2 = \|\Sigmab^\top (\I-\P_{\Z})\Sigmab \|_2 \le \|\H\S_1\|_2^2 + \sigma_{k+1}^2$.

\end{proof}

\bibliographystyle{abbrv}
\bibliography{reference}

\end{document}